\newtheorem{theorem}{Theorem}[section]
\newtheorem{lemma}[theorem]{Lemma}
\newtheorem{corollary}[theorem]{Corollary}
\newtheorem{proposition}[theorem]{Proposition}
\theoremstyle{definition}
\newtheorem{definition}[theorem]{Definition}
\newtheorem{remark}[theorem]{Remark}
\newcommand{\B}{\mathbb{B}}
\newcommand{\C}{\mathbb{C}}
\newcommand{\D}{\mathbb{D}}
\renewcommand{\H}{\mathbb{H}}
\newcommand{\N}{\mathbb{N}}
\newcommand{\R}{\mathbb{R}}
\newcommand{\Z}{\mathbb{Z}}
\newcommand{\id}{{\sf id}}
\def\de{\partial}
\def\v{\varphi}
\renewcommand{\Im}{{\sf Im}\,}
\numberwithin{equation}{section}
\begin{document}
\title[Simultaneous models]{Simultaneous models for commuting holomorphic self-maps of the ball}
\author[L. Arosio]{Leandro Arosio$^\dag$}
\author[F. Bracci]{Filippo Bracci$^{\dag\dag}$}
\address{L. Arosio, F. Bracci: Dipartimento Di Matematica\\
Universit\`{a} di Roma \textquotedblleft Tor Vergata\textquotedblright\ \\
Via Della Ricerca Scientifica 1, 00133 \\
Roma, Italy} \email{arosio@mat.uniroma2.it, fbracci@mat.uniroma2.it}
\subjclass[2010]{Primary 32H50; Secondary 30D05, 37F99}
\keywords{Holomorphic iteration, commuting mappings, simultaneous linearization, canonical models}
\thanks{$^{\dag}$Supported by the SIR grant ``NEWHOLITE - New methods in holomorphic iteration'' n. RBSI14CFME}
\thanks{$^{\dag\dag}$Partially supported by the ERC grant ``HEVO - Holomorphic Evolution Equations'' n. 277691}
\begin{abstract}
We prove that a finite family of commuting holomorphic self-maps of the unit ball $\B^q\subset \C^q$ admits a simultaneous holomorphic conjugacy to a family of commuting automorphisms of a possibly lower dimensional ball, and that such conjugacy satisfies a universal property.  As an application we describe  when a hyperbolic and a parabolic holomorphic self-map of $\B^q$ can commute.
\end{abstract}
\maketitle

\section{Introduction}

In this paper we study simultaneous linearization  of commuting holomorphic self-maps of the unit ball $\B^q\subset \C^q$, and we use  such a tool to find obstructions to commutation. 

In order to state the results, we need to  recall what we mean by linearization of a holomorphic self-map. Even if we are interested in the general case of a holomorphic self-map of $\B^q$, for the sake of clearness we will  consider at first the  case of  a univalent self-map of the unit disc $\D\subset \C$.
It follows from the work of Schr\"oder \cite{Sc}, Valiron \cite{va2}, Pommerenke \cite{Po}, Baker and Pommerenke \cite{BaPo} (see also Cowen \cite{Co1} and Bourdon-Shapiro \cite{BS}), that given a univalent map $f\colon \D\to \D$  there exists a conjugacy as in the following commutative diagram:
$$\xymatrix{\D\ar[r]^{f}\ar[d]_{h}& \D\ar[d]^{h}\\
\Omega\ar[r]^{\v}& \Omega,}$$ 
where $\Omega$ is either $\D$ or $\C$, $h\colon \D\to \Omega$ is a univalent map, and $\v$  is an automorphism of $\Omega$, and thus a linear fractional transformation. Let $k\in \N$, $k\geq 2$.
Given a $k$-uple of commuting univalent self-maps $(f_1,\dots, f_k)$ of $\D$,  it is then natural to ask whether  
such a conjugacy can be performed simultaneously. In other words, the question is whether there exists  a triple $(\Omega,h,\Phi)$, where
$\Omega$ is either $\D$ or $\C$, $h\colon \D\to \Omega$ is univalent, $\Phi\coloneqq (\varphi_1,\dots ,\varphi_k)$ is a $k$-uple of commuting automorphisms of $\Omega$ and
$$h\circ f_j=\varphi_j\circ h,\quad  1\leq j\leq k.$$ 
Applying the previous result to the univalent self map $f_1\circ \dots\circ f_k$,  Cowen answered positively this question in \cite[Theorem 3.1]{Co2}.

Let us now move to higher dimensions. Linearization  for holomorphic self-maps of the unit ball has been studied in particular cases and under more or less stringent regularity assumptions by many authors, see, {\sl e.g.}, \cite{Bay, Bay2, Bay3,  cinzia1, cinzia2, BrGe, CM}.
For commuting holomorphic self-maps with a common isolated fixed point in $\B^q$, the simultaneous linearization has been investigated by MacCluer in \cite{Mac} exploiting constructions done by Cowen--MacCluer in \cite{CM} (see also \cite{KRS} and  \cite{BiGe}).  The case of commuting holomorphic self-maps with no fixed points in $\B^q$  was studied by Gentili and the second named author \cite{BrGe} under suitable regularity assumption at the Denjoy--Wolff point, which  allowed linearization by  iterative methods.
However, without any additional assumption the  iterative methods seem to fail (see \cite{BFG}).

In this paper we propose a new approach to simultaneous linearization in several variables, generalizing the theory of canonical models introduced in \cite{AB,A} from the case of a single holomorphic self-map $f$ of $\B^q$ to the case of a $k$-uple of commuting holomorphic self maps $(f_1, \dots ,f_k)$ of $\B^q$. If $f\colon \B^q\to \B^q$ is a univalent self-map, then it is shown in \cite{AB} that performing a direct limit  one can always find a conjugacy as follows:
$$\xymatrix{\B^q\ar[r]^{f}\ar[d]_{h}& \B^q\ar[d]^{h}\\
\Omega\ar[r]^{\psi}& \Omega,}$$ where $\Omega$ is a  $q$-dimensional complex manifold uniquely determined up to biholomorphism, $h\colon \B^q\to \Omega$ is a univalent map and $\psi$ is an automorphism of $\Omega$. 
The triple $(\Omega, h,\psi)$ is called a {\sl model} for $f$.
Due to the lack of a uniformization theorem in higher dimensions, the complex structure of $\Omega$ is not known in general.  However, identifying points of $\Omega$ which are at zero Kobayashi-pseudodistance, one  obtains a 
conjugacy as follows:
\begin{equation}\label{diagram}
\xymatrix{\B^q\ar[r]^{f}\ar[d]_{\ell}& \B^q\ar[d]^{\ell}\\
\B^d\ar[r]^{\tau}& \B^d,}
\end{equation}
where $0\leq d\leq q$, $\ell\colon \B^q\to \B^d$ is a holomorphic map and  $\tau$ is an automorphism of $\B^d$. The triple $(\B^d,\ell, \tau)$ in \eqref{diagram} is called the {\sl canonical Kobayashi hyperbolic semi-model} for $f$, 
and is characterized uniquely up to biholomorphisms by the following  universal property: every other conjugacy of $f$ with an automorphism of a Kobayashi hyperbolic complex manifold factorizes through the triple $(\B^d,\ell,\tau)$.
In particular, one has $d=0$ if and only if  the only possible conjugacy of $f$  with an automorphism of a Kobayashi hyperbolic complex manifold  is given by the trivial conjugacy to a point. We call the number $d$  the {\sl type} of $f$. 

Recall that a holomorphic self-map of $\B^q$ is said {\sl elliptic} if it has a fixed point in $\B^q$. A non-elliptic holomorphic self-map $f$ of $\B^q$ has a unique fixed (with respect to $K$-limits) boundary point $p\in\partial\B^q$ such that the dilation of $f$ at $p$ is less than or equal to $1$,  called the {\sl Denjoy--Wolff point} of $f$. A non-elliptic map $f$ is called {\sl hyperbolic} if  the dilation at its Denjoy--Wolff point is strictly less than $1$, and {\sl parabolic} if the dilation is equal to $1$  (see Section \ref{ball} for precise definitions).   

If $f$ is  hyperbolic, then the type $d$ is always strictly greater than $0$, and the  automorphism $\tau$ is  hyperbolic  with the same dilation at its Denjoy--Wolff point.

While the model $(\Omega, h, \psi)$ does not exist in general for holomorphic self-maps of $\B^q$ which are not univalent,  it is shown in \cite{A} that the canonical Kobayashi hyperbolic semi-model  $(\B^d, \ell, \tau)$ always exists. 

Our first result shows that the concept of canonical Kobayashi hyperbolic semi-model can be generalized to a  $k$-uple of commuting holomorphic self-maps $(f_1,\dots, f_k)$ of $\B^q$.

\begin{theorem}\label{thm-intro1}
Let $(f_1,\ldots, f_k)$ be a $k$-uple of commuting holomorphic self-maps of $\B^q$. Then there exist $0\leq d\leq q$, a holomorphic map $\ell:\B^q\to \B^d$ and a $k$-uple of commuting automorphisms  $(\tau_1,\ldots, \tau_k)$ of $\B^d$ such that 
\[
\ell \circ f_j=\tau_j\circ \ell, \quad 1\leq j\leq k.
\]


Moreover, if $\Lambda$ is a Kobayashi hyperbolic complex manifold such that there exists a holomorphic map $g:\B^q\to \Lambda$ and a $k$-uple of commuting automorphisms $(\varphi_1,\ldots, \varphi_k)$ of $\Lambda$ such that $$g \circ f_j=\varphi_j\circ g, \quad  1\leq j\leq k,$$ then there exists a unique holomorphic mapping $\eta:\B^d\to \Lambda$  such that $g=\eta \circ \ell$ and $\eta\circ \tau_j=\varphi_j \circ \eta$ for all $1\leq j\leq k$.
\end{theorem}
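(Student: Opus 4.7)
The plan is to apply the canonical Kobayashi hyperbolic semi-model theorem of \cite{A} to the composition $F := f_1\circ f_2\circ \cdots \circ f_k$ and then to manufacture the automorphisms $\tau_j$ by invoking the universal property of the resulting semi-model $(\B^d,\ell,\tau)$. The key observation is that for every $j$, the map $\ell\circ f_j:\B^q\to \B^d$ again intertwines $F$ with $\tau$, since
\[
(\ell\circ f_j)\circ F=\ell\circ F\circ f_j=\tau\circ(\ell\circ f_j),
\]
using that $f_j$ commutes with $F$. The single-map universal property then produces a unique holomorphic $\tau_j:\B^d\to\B^d$ with $\ell\circ f_j=\tau_j\circ \ell$ and $\tau_j\circ \tau=\tau\circ \tau_j$. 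The pairwise commutation $\tau_i\circ \tau_j=\tau_j\circ\tau_i$ and the factorization $\tau_1\circ\cdots\circ \tau_k=\tau$ would then follow from the same uniqueness clause, by pushing the identities $f_i\circ f_j=f_j\circ f_i$ and $F=f_1\circ\cdots\circ f_k$ through $\ell$.

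The step I expect to require the most care is verifying that each $\tau_j$ is actually an automorphism of $\B^d$, since a priori the universal property only delivers a holomorphic self-map. My plan is to set $A_j:=\tau_1\circ\cdots\circ \widehat{\tau_j}\circ\cdots\circ \tau_k$ and use the commutation established above to write $\tau=\tau_j\circ A_j=A_j\circ \tau_j$. Because $\tau$ is a biholomorphism of $\B^d$, the identity $A_j\circ \tau_j=\tau$ forces $\tau_j$ to be injective, and $\tau_j\circ A_j=\tau$ forces it to be surjective; hence $\tau_j$ is a holomorphic bijection of $\B^d$, and therefore biholomorphic by Cartan's theorem for bounded domains.

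For the universal property, given $(\Lambda,g,(\varphi_1,\ldots,\varphi_k))$ as in the statement, I would set $\Phi:=\varphi_1\circ\cdots\circ \varphi_k$. Since the $\varphi_j$ commute, $\Phi$ is an automorphism of $\Lambda$ and $g$ intertwines $F$ with $\Phi$. The single-map universal property applied to the triple $(\Lambda,g,\Phi)$ then produces a unique holomorphic $\eta:\B^d\to\Lambda$ with $g=\eta\circ\ell$ and $\eta\circ\tau=\Phi\circ\eta$; this is the map I would take. To upgrade this to the individual intertwinings $\eta\circ \tau_j=\varphi_j\circ\eta$, I would invoke the single-map universal property a second time, now applied to the datum $(\Lambda,g\circ f_j,\Phi)$: both candidates $\eta\circ \tau_j$ and $\varphi_j\circ\eta$ satisfy $(\,\cdot\,)\circ\ell=g\circ f_j$ and $(\,\cdot\,)\circ\tau=\Phi\circ(\,\cdot\,)$, using the commutation of the $\tau_i$ in the first case and of the $\varphi_i$ in the second, so the uniqueness clause forces them to agree. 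The uniqueness of $\eta$ itself is inherited directly from the single-map universal property.
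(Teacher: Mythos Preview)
Your argument is correct and, in fact, mirrors Cowen's one-variable strategy (mentioned in the introduction) of passing to the composed map $F=f_1\circ\cdots\circ f_k$. The paper takes a different route: it builds a canonical Kobayashi hyperbolic direct limit for the $\N^k$-indexed dynamical system $(F^{M-N})_{M\geq N}$ (Lemma~\ref{directlimit} and Theorem~\ref{principaleforward}), extracts each $\tau_j$ from the universal property of that $\N^k$-indexed limit, and proves $\tau_j$ is an automorphism by constructing an explicit inverse $\delta_j$ via the ``shift'' $\alpha_N\mapsto\alpha_{N+E_j}$ rather than via your factorization $\tau_j\circ A_j=\tau$. Your reduction is more economical for the bare statement of the theorem; the paper's $\N^k$-indexed framework, however, simultaneously delivers the convergence formulas \eqref{kobdyn} for $(F^M)^*k_X$ and $(F^M)^*\kappa_X$ over all multi-indices $M\in\N^k$, which are used repeatedly afterwards (Lemma~\ref{divcomm}, Propositions~\ref{iltipo} and~\ref{univ}). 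One small point worth making explicit in your write-up: when you invoke the single-map universal property to produce $\tau_j$ and later $\eta$, you are applying it to \emph{quasi}-models $(\B^d,\ell\circ f_j,\tau)$ and $(\Lambda,g,\Phi)$, not semi-models (the covering condition \eqref{due} is not a priori known); this strengthened universal property does hold---it is Remark~\ref{defgiusta} here, and the analogous fact for $k=1$ in \cite{A}---but you should cite it as such.
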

The triple $(\B^d,\ell,(\tau_1,\ldots, \tau_k) )$ is called the   {\sl canonical Kobayashi hyperbolic semi-model} for $(f_1,\ldots, f_k)$.
Theorem \ref{thm-intro1} actually holds in  the more general case of a cocompact Kobayashi hyperbolic complex manifold, {\sl e.g.}, every bounded homogenous domain of $\C^q$ (see Theorem \ref{principaleforward}). 

Something more can be said about $(\B^d,\ell,(\tau_1,\ldots, \tau_k) )$.
It follows from Proposition \ref{disdiv} that if there is $1\leq j\leq k$ such that $f_j$ is hyperbolic, then the dimension $d$ is strictly greater than zero, and the  automorphism $\tau_j$ is  hyperbolic  with  the same dilation  at its Denjoy--Wolff point.
It is also important to notice that for a fixed $1\leq j\leq k$  the triple $(\B^d, \ell, \tau_j)$ is in general not  the canonical Kobayashi hyperbolic semi-model for  $f_j$. In fact $d\leq   {\rm type}(f_j)$, and  strict inequality can occur.
On the other hand, the triple $(\B^d,\ell, \tau_1\circ  \dots \circ \tau_k)$ is the canonical Kobayashi hyperbolic semi-model for  $f_1\circ \dots \circ f_k$. This shows that the dimension $d$ is  related to the dynamics of the map $ f_1\circ \dots \circ f_k$.

We now turn to the following problem: if $f,g\colon \B^q\to \B^q$ are two commuting holomorphic self-maps, what can be said about their ellipticity/parabolicity/hyperbolicity? 
If one of the maps is elliptic, then the answer is trivial.
Indeed, if $f$ has a fixed point $z\in \B^q$, then for all $n\geq 0$ the point $g^n(z)$ is fixed by $f$. Hence, if $f$ has a unique fixed point, then  $g$ has to be elliptic too.
On the other hand,  it is easy to find examples of elliptic mappings  (with affine slices of fixed points) commuting with parabolic or hyperbolic mappings (see \cite{Br}). Hence in the following discussion we will assume that $f$ and $g$ are not elliptic.

Heins \cite{He} proved that if $q=1$ and $f$ is a hyperbolic automorphism of $\D$, then $g$ is a hyperbolic automorphism as well.
Behan \cite[Thm. 6]{Behan} and Shields \cite[Thm. 1]{Shields} proved that if $q=1$ and  $f$ is not a hyperbolic automorphism of $\D$, 
then $f$ and $g$ have the same Denjoy--Wolff point.
As a consequence of these results, in  dimension one $f$ and $g$ may have different Denjoy--Wolff point only if they are
both hyperbolic automorphisms. Cowen \cite[Cor. 4.1]{Co2}, starting from the previous results and the existence of models, proved the following result.
\begin{theorem}[Cowen]\label{Cowen-intro}
Let $f, g$ be two nonelliptic commuting holomorphic self-maps of $\D$. 
If $f$ is hyperbolic, then $g$ is hyperbolic (and thus if $g$ is parabolic, then $f$ is parabolic).
\end{theorem}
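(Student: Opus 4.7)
The plan is to feed the pair $(f,g)$ into Theorem~\ref{thm-intro1} and reduce Cowen's statement to an algebraic dichotomy in $\Aut(\D)$ combined with a translation-length comparison. Specifically, I would first apply Theorem~\ref{thm-intro1} with $q=1$ and $k=2$ to obtain a canonical Kobayashi hyperbolic semi-model $(\B^d,\ell,(\tau_f,\tau_g))$ with $0\leq d\leq 1$. Because $f$ is hyperbolic, Proposition~\ref{disdiv} forces $d\geq 1$, so $d=1$: the map $\ell\colon\D\to\D$ is nonconstant holomorphic and $\tau_f$ is a hyperbolic automorphism of $\D$.

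The key algebraic input is that the centralizer of a hyperbolic element in $\Aut(\D)$ consists of the identity together with the one-parameter family of hyperbolic automorphisms sharing the same two boundary fixed points. Since $\tau_g$ commutes with $\tau_f$, we conclude that $\tau_g$ is either $\id$ or hyperbolic. Assume for contradiction that $g$ is parabolic. Then, since $\ell$ is $1$-Lipschitz for the Kobayashi distance and $\tau_g$ is an isometry, iterating $\ell\circ g^n=\tau_g^n\circ\ell$ yields
\[
k_\D\bigl(\tau_g^n(\ell(z)),\ell(z)\bigr)\leq k_\D\bigl(g^n(z),z\bigr),\qquad n\geq 1.
\]
Dividing by $n$ and letting $n\to\infty$ shows that the Gromov translation length of $\tau_g$ is bounded above by that of $g$; as $g$ is parabolic, $t(g)=0$, hence $t(\tau_g)=0$ and $\tau_g$ cannot be hyperbolic. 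By the dichotomy, $\tau_g=\id$.

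It remains to rule out $\tau_g=\id$. The triple $(\D,\ell,\id)$ is a semi-conjugation of the single map $g$ to an automorphism of a Kobayashi hyperbolic manifold; feeding it into the universal property of the canonical Kobayashi hyperbolic semi-model $(\B^{d_g},\ell_g,\tau_g')$ of $g$ from~\cite{A}, one obtains a holomorphic $\eta\colon\B^{d_g}\to\D$ with $\ell=\eta\circ\ell_g$ and $\eta\circ\tau_g'=\eta$. Nonconstancy of $\ell$ forces $d_g=1$ and $\eta$ open, which in turn forces $\tau_g'=\id$; but this contradicts the nontriviality of the automorphism in the canonical semi-model of a nonelliptic map with positive-dimensional target. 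Therefore $g$ cannot be parabolic, so $g$ is hyperbolic.

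The main obstacle I expect is the last implication: the fact that if $g$ is nonelliptic and its canonical Kobayashi hyperbolic semi-model has positive-dimensional target, then the automorphism $\tau_g'$ there cannot be the identity. This should follow either from the direct-limit construction of the semi-model in~\cite{AB, A}, in which the identity automorphism would correspond to the trivial target, or from a translation-length argument within the single-map theory analogous to the one used in the parabolic step above. Once this point is in place, the proof is a clean combination of the universal property of Theorem~\ref{thm-intro1}, the structure of commuting M\"obius transformations of $\D$, and the fact that holomorphic maps are $1$-Lipschitz for the Kobayashi distance.
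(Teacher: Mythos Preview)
Your approach parallels the paper's proof of Theorem~\ref{teo-aut} (which, specialized to $q=1$, yields Cowen's theorem): pass to automorphisms of $\D$ via a semi-model, use the centralizer of a hyperbolic element together with the divergence rate to see that the automorphism attached to $g$ is either hyperbolic or the identity, then rule out the identity. The paper works with the CSM of $f$ alone and the induced map $\Gamma_f(g)$, while you work with the CSM of the pair $(f,g)$; this is a harmless variation. The substantive divergence---and the gap---is in how you rule out $\tau_g=\id$.

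The claim ``$\eta$ open and $\eta\circ\tau_g'=\eta$ forces $\tau_g'=\id$'' is false. Conjugating to $\H$, take $\tau_g'(w)=w+1$ (parabolic) and $\eta(w)=e^{2\pi i w}$; then $\eta\colon\H\to\D$ is nonconstant and open, satisfies $\eta\circ\tau_g'=\eta$, yet $\tau_g'\neq\id$. So your detour through the CSM of $g$ alone does not close the argument. Worse, attempting to salvage it is circular: to exclude a parabolic $\tau_g'$ you would need that the hyperbolic map $\Gamma_g(f)$ (which has $c(\Gamma_g(f))=c(f)>0$ by Proposition~\ref{divgamma}) cannot commute with the parabolic automorphism $\tau_g'$---and that is exactly (a special case of) the theorem you are proving. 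The obstacle you flagged, nontriviality of $\tau_g'$ for nonelliptic $g$ with $d_g=1$, is a legitimate concern in dimension one, but it is secondary; the step before it already fails.

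The paper handles the identity case by a different idea, and it is the one you are missing. Once $h\circ g=h$ (equivalently $\ell\circ g=\ell$), Proposition~\ref{univ} applies because $f$ has automorphism type: for any $r>0$ the intertwining map is univalent on $B(f^n(x),r)$ for $n$ large. Choosing $r>k_\D(x,g(x))$ and using $k_\D\bigl(f^n(x),g(f^n(x))\bigr)\le k_\D(x,g(x))<r$, one gets $h(g(f^n(x)))=h(f^n(x))$ with $h$ injective on that ball, hence $g(f^n(x))=f^n(x)$. Thus $g$ has an interior fixed point, contradicting nonellipticity. Replacing your final paragraph with this eventual-univalence argument completes the proof.
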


In higher dimensions, it was proved in \cite{Br} that $f$ and $g$  have the same Denjoy--Wolff point unless both are hyperbolic automorphisms when restricted to the one-dimensional slice connecting the two  Denjoy--Wolff points. 
In \cite{GdF} de Fabritiis--Gentili   proved that if $f$ is a hyperbolic automorphism of $\B^q$, then $g$ is hyperbolic and it is a hyperbolic automorphism when restricted to the one-dimensional slice connecting the two fixed points of $f$. Later on,
de Fabritiis  \cite{deFa} completely characterized the mapping $g$ in this case. Bayart \cite[Thm. 6.1]{Bay}  proved, under regularity conditions at the Denjoy--Wolff point, that if $f$ is a hyperbolic self-map of $\B^q$, then it cannot commute with certain parabolic maps.

Using the theory developed in the present paper, we are able to describe when a hyperbolic and a parabolic self-map of $\B^q$ can commute   in terms of their types   (see Theorem \ref{teo-aut}, Proposition \ref{para-0}, Proposition \ref{para-1}). 
Recall that the {\sl step}  of $g$ at $x_0$ is defined as the limit $s_1^g(x_0)\coloneqq \lim_{n\to\infty} k_{\B^q}(g^{n+1}(x_0),g^n(x_0))$, where $k_{\B^q}$ denotes the Kobayashi distance. 
\begin{theorem}\label{thm-intro2}
Let $f, g$ be two nonelliptic commuting holomorphic self-maps of $\B^q$. 
\begin{enumerate}
\item If $f$ is hyperbolic of  type $q$, then $g$ is hyperbolic.
\item If $g$ is parabolic of type $0$, then $f$ is parabolic.
\item If $g$ is parabolic of type $1$ and there exists $x_0$ such that $s_1^g(x_0)>0$, then $f$ is parabolic.

\end{enumerate}
Moreover, there exist  counterexamples in all remaining cases, that is,
for all $u,v\in \N$ such that 
$$1\leq u\leq q-1,\quad 2\leq v\leq q,$$
there exist  commuting holomorphic self-maps $f, g\colon \B^q\to \B^q$ such that $f$ is hyperbolic of type $u$ and  $g$ is parabolic of type $v$.
\end{theorem}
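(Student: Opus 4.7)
The plan is to apply Theorem \ref{thm-intro1} to the commuting pair $(f,g)$ to obtain the joint canonical Kobayashi hyperbolic semi-model $(\B^d, \ell, (\tau_f, \tau_g))$, and to exploit the two structural facts noted in the excerpt: the type inequality $d \le \min(\mathrm{type}(f), \mathrm{type}(g))$ and, via Proposition \ref{disdiv}, the fact that whenever $f$ (resp.\ $g$) is hyperbolic one has $d \ge 1$ and $\tau_f$ (resp.\ $\tau_g$) is a hyperbolic automorphism of $\B^d$ with the same dilation at its Denjoy--Wolff point. A basic transfer principle I would use throughout is: since $\ell$ is $1$-Lipschitz for Kobayashi distances and $\ell\circ g^n = \tau_g^n\circ\ell$, linear growth of $\{\tau_g^n(\ell(x_0))\}$ forces the same of $\{g^n(x_0)\}$, so $\tau_g$ being a hyperbolic automorphism of $\B^d$ forces $g$ to be a hyperbolic self-map of $\B^q$.

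For (1), $f$ hyperbolic of type $q$ gives $d \ge 1$ and $\tau_f$ hyperbolic; by the classification of automorphisms commuting with a hyperbolic automorphism of $\B^d$ packaged in Theorem \ref{teo-aut}, together with the maximality encoded in $\mathrm{type}(f)=q$, the automorphism $\tau_g$ must be hyperbolic, and the transfer principle gives $g$ hyperbolic. For (2), $g$ parabolic of type $0$ forces $d=0$, and Proposition \ref{disdiv} then excludes $f$ being hyperbolic, so $f$ is parabolic. For (3), $g$ parabolic of type $1$ with $s_1^g(x_0)>0$ gives $d\le 1$; assuming $f$ hyperbolic forces $d=1$, so $\tau_f,\tau_g\in\Aut(\D)$ commute with $\tau_f$ hyperbolic, whence by classical one-dimensional theory $\tau_g$ is the identity or a hyperbolic automorphism. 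The hyperbolic case contradicts $g$ parabolic via the transfer principle; the identity case is ruled out by applying the universal property of the canonical semi-model $(\D,\ell_g,\sigma_g)$ of $g$ alone, where $\sigma_g$ is a parabolic automorphism of $\D$ thanks to the positive step at $x_0$, to descend $f$ to a self-map $\eta$ of $\D$ commuting with $\sigma_g$, and then using Cowen's Theorem \ref{Cowen-intro} to contradict the hyperbolicity of $f$. These three arguments are precisely what is packaged as Propositions \ref{para-0} and \ref{para-1}.

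For the counterexamples, I would construct for each $(u,v)$ with $1\le u\le q-1$ and $2\le v\le q$ an explicit pair on $\B^q$. Using the Siegel domain realization and a suitable coordinate splitting, take $f$ acting as a hyperbolic automorphism on a $u$-dimensional sub-structure and $g$ acting as a parabolic automorphism on a $v$-dimensional sub-structure, sharing the relevant boundary fixed point so that $f$ and $g$ commute by construction; the types can then be read off from explicit canonical semi-models. The main obstacle I expect is the $\tau_g=\id$ subcase of (3), which requires a delicate combination of the universal property with classical one-dimensional results, together with the verification that the constructed counterexamples realize precisely the prescribed types $u$ and $v$ rather than any larger value.
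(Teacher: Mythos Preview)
Your arguments for (2) and (3) are essentially correct and match the paper's Propositions \ref{para-0} and \ref{para-1}. One small correction in (3): the positive step $s_1^g(x_0)>0$ only guarantees $\sigma_g\neq\id$ in the single CSM of $g$, not that $\sigma_g$ is parabolic; but since $c(\sigma_g)=c(g)=0$ rules out hyperbolic, and any non-identity automorphism of $\D$ commuting with a hyperbolic one must itself be hyperbolic, you still obtain the contradiction. The paper's route is slightly leaner: it goes straight to the single CSM of $g$ and to $\Gamma_g(f)$, without detouring through the joint CSM first.

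Your argument for (1), however, has a genuine gap. Working with the \emph{joint} CSM $(\B^d,\ell,(\tau_f,\tau_g))$ you only know $d\le\min(\mathrm{type}(f),\mathrm{type}(g))$; the hypothesis $\mathrm{type}(f)=q$ does \emph{not} force $d=q$ (the paper's own counterexamples in Proposition \ref{example} show $\mathrm{type}(f,g)$ can be strictly smaller than $\min(\mathrm{type}(f),\mathrm{type}(g))$). And even if $d$ were large, an automorphism $\tau_g$ of $\B^d$ commuting with a hyperbolic automorphism $\tau_f$ can perfectly well be elliptic---for instance $\tau_g$ could act trivially on the axis of $\tau_f$ and by a unitary on the orthogonal directions---so you cannot conclude $\tau_g$ is hyperbolic from commutation alone. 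Your appeal to ``Theorem \ref{teo-aut}'' here is circular: that theorem \emph{is} statement (1).

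The paper's proof of (1) instead uses the \emph{single} CSM $(\H^q,h,\varphi)$ of $f$, whose base has full dimension $q$ precisely because $\mathrm{type}(f)=q$, and considers the induced self-map $\Gamma_f(g)$ (not an automorphism in general). By the de Fabritiis--Gentili structure theory, $\Gamma_f(g)$ restricted to the axis $\{w=0\}$ is $z\mapsto az$; if $a\neq 1$ one reads off that $g$ is hyperbolic. The delicate case is $a=1$, and here the paper invokes the key Proposition \ref{univ}: since $\mathrm{type}(f)=q$, the intertwiner $h$ is \emph{univalent} on Kobayashi balls $B(f^n(x),r)$ for $n$ large, and this univalence lets one lift the equality $h(g(f^n(x)))=h(f^n(x))$ to an actual fixed point $g(f^n(x))=f^n(x)$ in $\B^q$, contradicting the nonellipticity of $g$. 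This eventual-univalence step is the crux of (1), and it is absent from your outline.
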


The assumption on the step in (3) could be redundant, indeed it is actually an open question whether there exists a parabolic self map $g\colon \B^q\to \B^q$ with  ${\rm type}(g)\geq 1$ which  admits a point $x_0$ such that $s_1^g(x_0)=0$. 
 
Notice that Theorem \ref{thm-intro2} (1) generalizes the result of  de Fabritiis--Gentili \cite{GdF}, and generalizes Cowen's Theorem \ref{Cowen-intro} to higher dimensions, since, if $q=1$, then ${\rm type}(f)=1=q$. The proof of Theorem \ref{thm-intro2} (1)   is based on the following result  which is of interest by itself and is proved in Proposition \ref{univ} in the more general case of a $k$-uple of commuting holomorphic self-maps.
\begin{proposition}
 Let $f\colon \B^q\to \B^q$ be a holomorphic map with  ${\rm type}(f)=q$. Let $(\B^q,\ell,\tau)$ be a canonical Kobayashi hyperbolic semi-model for $f$. Then for every $R>0$ and every $z\in \B^q$, both $\ell$ and $f$ are   univalent on the Kobayashi ball $B(f^n(z),R)$ for $n$ large enough. 
\end{proposition}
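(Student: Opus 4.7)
\textit{Plan.} The argument proceeds by contradiction, after a normal-families rescaling, with the main input being the identification of $\mathrm{type}(f)=q$ with an asymptotic statement about the invariant Bergman Jacobian of $\ell$ along orbits of $f$.

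First I reduce univalence of $f$ to univalence of $\ell$: if $x,y\in B(f^n(z),R)$ satisfy $f(x)=f(y)$, then $\tau(\ell(x))=\ell(f(x))=\ell(f(y))=\tau(\ell(y))$, and the injectivity of the automorphism $\tau$ yields $\ell(x)=\ell(y)$, so univalence of $\ell$ on $B(f^n(z),R)$ forces $x=y$. It thus suffices to prove the conclusion for $\ell$. Assume, for contradiction, that there are $n_k\to\infty$ and distinct $x_k,y_k\in B(f^{n_k}(z),R)$ with $\ell(x_k)=\ell(y_k)$. Choose $T_k,S_k\in\Aut(\B^q)$ with $T_k(0)=f^{n_k}(z)$ and $S_k(0)=\ell(f^{n_k}(z))=\tau^{n_k}(\ell(z))$, and set
\[
\tilde\ell_k:=S_k^{-1}\circ\ell\circ T_k\colon\B^q\to\B^q,\qquad \tilde\ell_k(0)=0.
\]
Since $T_k,S_k$ are Kobayashi isometries, univalence of $\ell$ on $B(f^{n_k}(z),R)$ is equivalent to univalence of $\tilde\ell_k$ on $B(0,R)$, and the points $\tilde x_k:=T_k^{-1}(x_k)$, $\tilde y_k:=T_k^{-1}(y_k)$ are distinct elements of $\overline{B(0,R)}\Subset\B^q$ with $\tilde\ell_k(\tilde x_k)=\tilde\ell_k(\tilde y_k)$. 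Up to subsequences, Montel gives $\tilde\ell_k\to\tilde\ell_\infty$ locally uniformly, $\tilde x_k\to\tilde x$, $\tilde y_k\to\tilde y$; since $\tilde\ell_\infty(0)=0\in\B^q$ the maximum principle forces $\tilde\ell_\infty\colon\B^q\to\B^q$.

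The crux is to show that $\tilde\ell_\infty$ is an automorphism of $\B^q$. For a holomorphic self-map $h$ of $\B^q$ let $\mu_h(w):=|\det Dh(w)|^2(1-\|w\|^2)^{q+1}/(1-\|h(w)\|^2)^{q+1}$ denote its invariant Bergman Jacobian; by Schwarz--Pick $\mu_h\le 1$, with equality characterizing automorphisms at regular points. Because $T_k,S_k$ are automorphisms one has $|\det D\tilde\ell_k(0)|^2=\mu_\ell(f^{n_k}(z))$, and the intertwining $\ell\circ f^n=\tau^n\circ\ell$ together with $\mu_{\tau^n}\equiv 1$ yields the multiplicative identity
\[
\mu_\ell(f^n(z))\,\mu_{f^n}(z)=\mu_\ell(z),
\]
so $\mu_\ell(f^n(z))$ is non-decreasing in $n$. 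The hypothesis $\mathrm{type}(f)=q$ means that the canonical semi-model has maximal target dimension, so at the infinitesimal level $\lim_n\kappa_{\B^q}(f^n(z),Df^n(z)v)=\kappa_{\B^q}(\ell(z),D\ell(z)v)$ for every tangent vector $v$; this is essentially built into the construction of $(\B^q,\ell,\tau)$ as the quotient of a direct limit by the zero-Kobayashi-pseudodistance relation from \cite{A}. Passing to determinants of the associated quadratic forms at $z$ produces $\mu_{f^n}(z)\to\mu_\ell(z)$, and the identity above then forces $\mu_\ell(f^{n_k}(z))\to 1$. Hence $|\det D\tilde\ell_\infty(0)|=1$, and by the equality case of the Schwarz--Pick lemma $\tilde\ell_\infty$ is a unitary automorphism of $\B^q$.

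Since $\tilde\ell_\infty$ is univalent with nowhere-vanishing Jacobian on the compact set $\overline{B(0,R)}$, the Hurwitz-type theorem in several complex variables implies that $\tilde\ell_k$ is univalent on $\overline{B(0,R)}$ for all $k$ large enough, contradicting $\tilde\ell_k(\tilde x_k)=\tilde\ell_k(\tilde y_k)$. The main obstacle is the infinitesimal convergence $\mu_{f^n}(z)\to\mu_\ell(z)$: the universal property of Theorem \ref{thm-intro1} alone does not deliver it, and one has to invoke the explicit direct-limit construction and the coincidence of infinitesimal Kobayashi pseudo-metrics guaranteed by $\mathrm{type}(f)=q$.
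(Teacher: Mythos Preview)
Your argument is essentially correct and takes a genuinely different route from the paper's. The paper (Proposition~\ref{univ}) normalises the orbit by automorphisms as you do, but then appeals directly to the CDL construction of \cite{A}: because $\mathrm{type}(f)=q$, the limiting retraction in the direct limit is $\id_X$, so the normalised intertwining maps converge to the identity, and a Hurwitz argument gives the contradiction. You instead extract from that construction only the scalar consequence \eqref{kobdyn}, namely $(f^m)^*\kappa_{\B^q}\to \ell^*\kappa_{\B^q}$ pointwise on $T\B^q$, and convert it into $\mu_{f^n}(z)\to\mu_\ell(z)$ via the Bergman volume; the chain-rule identity $\mu_\ell(f^n(z))\,\mu_{f^n}(z)=\mu_\ell(z)$ then forces $|\det D\tilde\ell_\infty(0)|=1$, so by Cartan's Schwarz lemma $\tilde\ell_\infty\in\Aut(\B^q)$. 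This is a clean, more elementary substitute for the CDL limit being the identity, and it makes the role of maximal type very transparent as an equality case of Schwarz--Pick.

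There is one small gap you should close. The implication ``$\mu_{f^n}(z)\to\mu_\ell(z)$ and $\mu_\ell(f^n(z))\mu_{f^n}(z)=\mu_\ell(z)$, hence $\mu_\ell(f^{n_k}(z))\to 1$'' fails if $\mu_\ell(z)=0$, i.e.\ if $D\ell(z)$ is singular; in that case both sides of the identity vanish and nothing is learned about $\mu_\ell(f^{n_k}(z))$. The fix is immediate: since $\bigcup_n\tau^{-n}(\ell(\B^q))=\B^q$ the map $\ell$ has full rank on a dense open set, so choose $z'$ near $z$ with $\mu_\ell(z')>0$ and run your argument for $z'$ with the enlarged radius $R'=R+k_{\B^q}(z,z')$; then $B(f^n(z),R)\subset B(f^n(z'),R')$ for all $n$, and univalence on the latter gives univalence on the former. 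With this adjustment your proof is complete.
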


\medskip

The authors thank the anonymous referees for their comments which improved  the paper.

\section{Semi-models for commuting maps}

Let $X$ be a complex manifold. We denote by $k_X$ the Kobayashi pseudodistance and by $\kappa_X$ the Kobayashi pseudometric.
\begin{definition}
We say that $X$ is {\sl cocompact} if $X/{\rm aut}(X)$ is compact. 
\end{definition}
Notice that this implies that $X$ is complete Kobayashi  hyperbolic \cite[Lemma 2.1]{FS}.

\begin{definition}
Let $k\in \N$, $k\geq 1$. Let $A$ a set. We call a {\sl $k$-sequence} in $A$ any  mapping $a\colon \N^k\to A$, and we denote it by $(a_N)_{N\in\N^k}$.
We consider the following partial ordering on $\N^k$: $$(m_1,\dots , m_k)\geq (n_1,\dots, n_k)$$ if for all $1\leq j\leq k$ we have $m_j\geq n_j.$ 
We denote  by $E_j\in \N^k$ the $k$-uple  with 1 as $j$-th entry and 0 everywhere else. 

Let $N=(n_1,\dots, n_k)\in \N^k$. If $F=(f_1,\dots, f_k)$ is a $k$-uple of commuting holomorphic self-maps of a complex manifold $X$, we denote by $F^N$ the holomorphic self map of $X$ defined by $f_1^{n_1}\circ \dots \circ f_k^{n_k}$. 

Let $F=(f_1,\dots, f_k)$ be a $k$-uple of commuting holomorphic self-maps of   $X$, and let  $G=(g_1,\dots, g_p)$ be a $p$-uple of commuting holomorphic self-maps of $X$. We say that $F$ and $G$ commute if $$f_j\circ g_i=g_i\circ f_j,\quad\forall \,1\leq j\leq k,1\leq i\leq p.$$

\end{definition}
\begin{remark}
Notice that $F^{E_j}=f_j$ for all $1\leq j\leq k$.

\end{remark}

\begin{remark}
We consider the  base of neighborhoods for the point $\infty$ in $\N^k\cup\{\infty\}$ given by $\{N\geq M\}_{M\in \N^k}.$
Thus, if  $(a_N)_{N\in \N^k}$ is a $k$-sequence in a Hausdorff space $Y$, by  
$\lim_{N\to\infty} a_N=\ell\in Y$ we mean that for any neighborhood $W$ of $\ell$ in  $Y$, there exists $N_0\in \N^k$ such that $a_N\in W$ for all $N\geq N_0$.
\end{remark}
The following lemmas are classical and the proof is omitted.
\begin{lemma}
Let $(a_N)_{N\in \N^k}$ be a $k$-sequence in a Hausdorff space $Y$. Then $\lim_{N\to\infty} a_N=\ell$ if and only if
for every   sequence $(N_h)_{h\in \N}$ in $\N^k$ converging to $\infty$ 
we have $\lim_{h\to\infty} a_{N_h}=\ell.$
\end{lemma}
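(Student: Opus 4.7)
The plan is to prove the two implications separately, both directly from the definitions of convergence given in the paragraph preceding the lemma. Throughout, Hausdorffness of $Y$ is needed only to make the symbol $\lim$ unambiguous; the equivalence itself is purely a statement about the order structure on $\N^k$.

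For the forward implication, I would assume $\lim_{N\to\infty}a_N=\ell$ and take an arbitrary sequence $(N_h)_{h\in\N}$ in $\N^k$ with $N_h\to\infty$. Given any neighborhood $W$ of $\ell$ in $Y$, the hypothesis furnishes $N_0\in\N^k$ with $a_N\in W$ for all $N\geq N_0$. By definition of $N_h\to\infty$ applied to the basic neighborhood $\{N\geq N_0\}$, there exists $h_0$ with $N_h\geq N_0$ for all $h\geq h_0$, hence $a_{N_h}\in W$ eventually, proving $a_{N_h}\to\ell$.

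For the reverse implication I would argue by contrapositive. If $\lim_{N\to\infty}a_N\neq\ell$, then there is a neighborhood $W$ of $\ell$ such that for every $M\in\N^k$ one can pick some $N(M)\geq M$ with $a_{N(M)}\notin W$. Choosing the exhaustive sequence $M_h:=(h,h,\dots,h)\in\N^k$ and setting $N_h:=N(M_h)$, one has $N_h\geq M_h$, and since $M_h\to\infty$ (every basic neighborhood $\{N\geq M\}$ contains $M_h$ for $h$ larger than every coordinate of $M$), it follows that $N_h\to\infty$. Yet $a_{N_h}\notin W$ for every $h$, so $(a_{N_h})$ does not converge to $\ell$, contradicting the hypothesis.

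There is essentially no obstacle here: the only minor point to be careful about is that $\N^k$ carries a partial (not total) order, so the notion of convergence is that of a net along the cofinal filter $\{N\geq M\}_{M\in\N^k}$; fortunately this filter is countably generated by the diagonal sequence $M_h=(h,\dots,h)$, which is exactly what lets us extract an honest sequence $(N_h)$ from the failure of convergence. With this observation the argument is the standard first-countability-style translation between net convergence and sequential convergence.
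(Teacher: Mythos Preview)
Your proof is correct. The paper itself omits the proof entirely, remarking only that the lemma is classical; your argument is exactly the standard one (exploiting that the filter of neighborhoods of $\infty$ in $\N^k$ is countably generated by the diagonal $M_h=(h,\dots,h)$), so there is nothing to compare.
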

\begin{lemma}
 Let  $(a_N)_{N\in \N^k}$ be a monotone non-increasing $k$-sequence in $[0,+\infty)$. Then 
$$\lim_{N\to\infty} a_N=\inf_{N\in \N^k} a_N.$$
\end{lemma}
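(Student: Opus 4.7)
The plan is to mimic the classical one-dimensional argument, replacing the usual ordering on $\N$ by the product partial ordering on $\N^k$ and using the neighborhood base at $\infty$ introduced in the excerpt.

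First, I would set $L\coloneqq \inf_{N\in\N^k} a_N$; since $a_N\geq 0$ for every $N$, we have $L\in[0,+\infty)$. The goal is to show $\lim_{N\to\infty}a_N=L$ in the sense of the neighborhood base $\{N\geq M\}_{M\in\N^k}$ for $\infty$.

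Next, fix an arbitrary neighborhood $W$ of $L$ in $[0,+\infty)$; it suffices to treat the case $W=(L-\varepsilon,L+\varepsilon)\cap[0,+\infty)$ for some $\varepsilon>0$. By the definition of infimum there exists some index $N_0\in\N^k$ with
$$L\leq a_{N_0}<L+\varepsilon.$$
The monotone non-increasing hypothesis says that $M\leq N$ in $\N^k$ implies $a_N\leq a_M$. Hence for every $N\geq N_0$ one gets
$$L\leq a_N\leq a_{N_0}<L+\varepsilon,$$
so $a_N\in W$. This is exactly the statement that $a_N\to L$ with respect to the base $\{N\geq M\}_{M\in\N^k}$, proving the equality $\lim_{N\to\infty}a_N=\inf_{N\in\N^k}a_N$.

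There is essentially no obstacle: the only subtle point, compared with the standard $k=1$ case, is to check that the single tail-starting index $N_0$ furnished by the infimum really does control all $N$ that are eventually in every coordinate, which is immediate from the product order definition. If desired, one can also invoke the preceding lemma and verify the condition along arbitrary sequences $N_h\to\infty$, but the direct tail argument above is cleaner.
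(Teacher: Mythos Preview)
Your proof is correct. The paper does not actually prove this lemma---it merely states that the result is classical and omits the argument---so there is nothing to compare; your direct tail argument via the infimum and the product order is the standard one and is complete as written.
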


In what follows, let $k\in \N, k\geq 1$.

\begin{definition}
Let $X$ be a complex manifold.
We call   {\sl (forward)  holomorphic $k$-dynamical system} on $X$ any
family $(F_{N,M}\colon X\to X)_{M\geq N\in \N^k}$ of holomorphic self-maps such that for all $M\geq U\geq  N\in \N^k$, we have
$$F_{U,M}\circ F_{N,U}= F_{N,M}.$$ 

Let $F\coloneqq(f_1,\dots, f_k)$  be a $k$-uple of commuting holomorphic self-maps $f_j\colon X\to X$. Then setting $F_{N,M}\coloneqq F^{M-N}$ for all $M\geq N\in \N^k$, we obtain a   holomorphic $k$-dynamical system naturally associated with $F$.
\end{definition}

\begin{definition}\label{directdef}
Let $X$ be a complex manifold and let $(F_{N,M}\colon X\to X)_{M\geq N\in \N^k}$ be a   holomorphic $k$-dynamical system.
We call {\sl canonical Kobayashi hyperbolic direct limit} (CDL for short) for $(F_{N,M})$ a pair $(Z,\alpha_N)$ where $Z$ is a Kobayashi hyperbolic  complex manifold and
$(\alpha_N\colon X\to Z)_{N\in \N^k}$ is a family of holomorphic mappings such that 
 $$\alpha_M\circ F_{N,M}=\alpha_N,\quad \forall \ M\geq N\in \N^k,$$
 which satisfies the following universal property:
if $Q$ is a Kobayashi hyperbolic complex manifold and if $(g_N\colon X\to Q)_{N\in \N^k}$ is a family of holomorphic mappings satisfying
$$g_M\circ F_{N,M}=g_N,\quad \forall \ M\geq N\in\N^k,$$
then there exists a  unique holomorphic mapping $\Gamma\colon Z\to Q$ such that
$$g_N=\Gamma\circ \alpha_N,\quad \forall\ N\in \N^k.$$
\end{definition}

The proof of the following lemma is trivial and is therefore omitted.
\begin{lemma}
The canonical Kobayashi hyperbolic direct limit of $(F_{N,M})$ is essentially unique, in the following sense.
Let $(Z,\alpha_N)$ and $(Q, g_N)$ be two CDL  for $(F_{N,M})$. Then there exists a biholomorphism 
$\Gamma\colon Z\to Q$ such that $$g_N=\Gamma\circ \alpha_N,\quad \forall\ N\in \N^k.$$
\end{lemma}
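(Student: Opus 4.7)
The plan is a routine application of the universal property: essential uniqueness of a direct limit is a purely categorical fact, so the argument will not use anything specific about complex manifolds beyond the fact that the identity map is holomorphic and compositions are holomorphic.

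First, I would apply the universal property of $(Z, \alpha_N)$ to the family $(g_N \colon X \to Q)_{N \in \N^k}$. Since $(Q, g_N)$ is in particular a pair consisting of a Kobayashi hyperbolic complex manifold and a family of holomorphic maps compatible with $(F_{N,M})$, the universal property produces a unique holomorphic map $\Gamma \colon Z \to Q$ such that $g_N = \Gamma \circ \alpha_N$ for every $N \in \N^k$. Symmetrically, applying the universal property of $(Q, g_N)$ to the family $(\alpha_N \colon X \to Z)_{N \in \N^k}$ yields a unique holomorphic map $\Gamma' \colon Q \to Z$ such that $\alpha_N = \Gamma' \circ g_N$ for every $N \in \N^k$.

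Next, I would show that $\Gamma$ and $\Gamma'$ are mutually inverse. The composition $\Gamma' \circ \Gamma \colon Z \to Z$ is holomorphic and satisfies $(\Gamma' \circ \Gamma) \circ \alpha_N = \Gamma' \circ g_N = \alpha_N$ for every $N$. But the identity $\id_Z$ also satisfies $\id_Z \circ \alpha_N = \alpha_N$. Invoking the uniqueness clause in the universal property of $(Z, \alpha_N)$ applied to the family $(\alpha_N \colon X \to Z)$ itself, we conclude $\Gamma' \circ \Gamma = \id_Z$. The same argument with the roles of $Z$ and $Q$ interchanged gives $\Gamma \circ \Gamma' = \id_Q$. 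Hence $\Gamma$ is a biholomorphism with the required intertwining property $g_N = \Gamma \circ \alpha_N$.

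There is essentially no obstacle: the only thing to be careful about is that in applying the uniqueness clause one really is in the hypotheses of the universal property, i.e.\ that $Z$ (resp.\ $Q$) is Kobayashi hyperbolic, which is part of the definition of a CDL. No estimate, no construction, and no special property of $\B^q$ or of the system $(F_{N,M})$ is needed.
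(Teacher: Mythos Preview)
Your proof is correct and is precisely the standard universal-property argument the paper has in mind; indeed, the paper omits the proof entirely, calling it trivial. There is nothing to add or change.
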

\begin{definition}
Let  $X,Y$ be complex manifolds, and  let $f\colon X\to Y$ be a holomorphic map. Then by $f^*k_Y$ and   $f^*\kappa_Y$ we denote the pullbacks 
$$f^*k_Y(x,y):=k_Y(f(x),f(y)),\quad x,y\in X,$$
$$ f^*\kappa_Y(x,v):=\kappa_Y(f(z), d_xf(v)), \quad x\in X, v\in T_xX.$$
\end{definition}

\begin{lemma}\label{directlimit}
Let $X$ be a cocompact  Kobayashi hyperbolic complex manifold, and  let $(F_{N,M}\colon X\to X)_{M\geq N\in \N^k}$ be a   holomorphic $k$-dynamical system. There exists a CDL $(Z,\alpha_N)$ for $(F_{N,M})$, where 
$Z$ is a holomorphic retract of $X$. Moreover,
 \begin{equation}\label{eqdir2}
 Z=\bigcup_{N\in \N^k}\alpha_N(X),
 \end{equation}
\begin{equation}\label{eqdir3}
\lim_{M\to\infty}  (F_{N,M})^*\, k_X=\alpha_N^* \, k_Z,\quad
\lim_{M\to\infty} (F_{N,M})^*\, \kappa_X=\alpha_N^* \, \kappa_Z,\quad  \forall \, N\in \N^k.
\end{equation} 
\end{lemma}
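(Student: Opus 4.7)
The plan is to reduce the problem to the $1$-dimensional case established in \cite{A}, exploiting the cofinality of the diagonal $\{n\mathbf{1}:n\in\N\}$, where $\mathbf{1}:=(1,\dots,1)$, in $\N^k$. First I record two preliminary facts used throughout. Since holomorphic maps contract the Kobayashi distance, the cocycle $F_{N,M'}=F_{M,M'}\circ F_{N,M}$ for $M'\geq M\geq N$ makes $M\mapsto (F_{N,M})^{*}k_X$ and $M\mapsto (F_{N,M})^{*}\kappa_X$ monotone non-increasing $k$-sequences, which therefore converge pointwise; the same cocycle yields $(F_{N,L})^{*}k_X(x,y)=(F_{M,L})^{*}k_X(F_{N,M}(x),F_{N,M}(y))$ for $L\geq M\geq N$, and its analogue for $\kappa_X$.

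Next I restrict the system to the diagonal by setting $g_n:=F_{n\mathbf{1},(n+1)\mathbf{1}}$; this produces a holomorphic $1$-dynamical system with composed maps $G_{n,m}:=g_{m-1}\circ\cdots\circ g_n=F_{n\mathbf{1},m\mathbf{1}}$. The construction of \cite{A}, which rests on cocompactness of $X$ and normal families for holomorphic self-maps into a Kobayashi hyperbolic manifold rather than on the single-map-iterate structure, applies to $(G_{n,m})$ and produces a CDL $(Z,(\beta_n)_{n\in\N})$ such that $Z$ is a holomorphic retract of $X$, $Z=\bigcup_n\beta_n(X)$, and $\lim_m (G_{n,m})^{*}k_X=\beta_n^{*}k_Z$ (and analogously for $\kappa_X$). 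This one-dimensional CDL is the main technical input.

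It remains to lift $(Z,(\beta_n))$ to the full $k$-system. For each $N\in\N^k$ I choose $n\geq\max_i N_i$ and set
\[
\alpha_N(x):=\beta_n\bigl(F_{N,n\mathbf{1}}(x)\bigr),
\]
independence of $n$ following from $\beta_m\circ G_{n,m}=\beta_n$ together with $F_{N,m\mathbf{1}}=G_{n,m}\circ F_{N,n\mathbf{1}}$. A direct cocycle manipulation then gives $\alpha_M\circ F_{N,M}=\alpha_N$ for $M\geq N$. For the universal property, a competitor $(Q,(h_N))$ satisfying $h_M\circ F_{N,M}=h_N$ restricts on the diagonal to a system meeting the hypotheses for $(Z,(\beta_n))$, yielding a unique holomorphic $\Gamma\colon Z\to Q$ with $h_{n\mathbf{1}}=\Gamma\circ\beta_n$ for all $n$; then $h_N=h_{n\mathbf{1}}\circ F_{N,n\mathbf{1}}=\Gamma\circ\alpha_N$, and uniqueness of $\Gamma$ is inherited from the diagonal. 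The identity \eqref{eqdir2} reduces to $Z=\bigcup_n\beta_n(X)=\bigcup_n\alpha_{n\mathbf{1}}(X)\subseteq\bigcup_N\alpha_N(X)\subseteq Z$, and \eqref{eqdir3} follows because the diagonal is cofinal in $\N^k$, so the infimum of the non-increasing $k$-sequence $(F_{N,M})^{*}k_X$ is attained along it, where for $n\geq\max_i N_i$ and $x':=F_{N,n\mathbf{1}}(x)$, $y':=F_{N,n\mathbf{1}}(y)$ we have $(F_{N,m\mathbf{1}})^{*}k_X(x,y)=(G_{n,m})^{*}k_X(x',y')\to\beta_n^{*}k_Z(x',y')=\alpha_N^{*}k_Z(x,y)$ as $m\to\infty$; the argument for $\kappa_X$ is identical.

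The main obstacle is importing the one-dimensional construction from \cite{A}: one must confirm that the argument there, originally written for iterates of a single self-map $f$, carries over to an arbitrary $1$-dynamical system $(g_n)$ on a cocompact Kobayashi hyperbolic manifold. This only requires that its key ingredients---the monotonicity of $m\mapsto (g_{m-1}\circ\cdots\circ g_n)^{*}k_X$ and the cocompactness of $X$, which together drive the normal-families/retraction argument---are still in force, which is the case here.
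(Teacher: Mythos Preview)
Your proof is correct and follows essentially the same approach as the paper: both reduce to the $1$-dimensional CDL of \cite{A} by restricting to a cofinal strictly increasing sequence in $\N^k$ (the paper uses an arbitrary such sequence $(N_h)$, you use the diagonal $n\mathbf{1}$), extend the maps $\alpha_N$ off that sequence via the cocycle, and then read off \eqref{eqdir2} and \eqref{eqdir3} from the $1$-dimensional result together with monotonicity. One remark: your closing paragraph is unnecessary hedging---\cite[Theorem~2.11]{A} is already stated for general holomorphic $1$-dynamical systems, not only for iterates of a single self-map, so there is nothing to ``confirm carries over.''
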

\begin{proof}
Let $(N_h)_{h\in \N}$ be a strictly monotone sequence in $\N^k$ converging to $\infty$.  
Define a  holomorphic dynamical system $(g_{h,j}\colon X\to X)_{j\geq h\in \N}$ by $$g_{h,j}\coloneqq F_{N_h,N_j}.$$
Let $(Z, \alpha_h)$ be the CDL given by  \cite[Theorem 2.11]{A}  for $(g_{h,j})$. For every $N\in \N^k$ let $h\in \N$ be  such that  $N_h\geq N$, and define $\alpha_N\colon X\to Z$ by 
\begin{equation}\label{catilina}
\alpha_N\coloneqq \alpha_{h}\circ F_{N,N_h}.
\end{equation} It is easy to see that this is well defined and that $(Z, \alpha_N)$ is a CDL for $(F_{N,M})$ satisfying (\ref{eqdir2}). 

We are left to prove (\ref{eqdir3}).
Let $x,y\in X$,  let $N\in \N^k$ and let $h\in \N$ be  such that  $N_h\geq N$. Notice that the $k$-sequence $(k_X(F_{N,M}(x),F_{N,M}(y)))_{M\geq N\in \N^k}$ is monotone non-increasing, and thus it admits a limit as $M\to \infty$.
Then, using  \cite[Theorem 2.11 (2.8)]{A},
\begin{align*}
\lim_{M\to\infty}  k_X(F_{N,M}(x),F_{N,M}(y))&=\lim_{j\to\infty}  k_X(F_{N,N_j}(x),F_{N,N_j}(y))
=\\
&=\lim_{j\to\infty} k_X(g_{h,j}(F_{N,N_h}(x)),g_{h,j}(F_{N,N_h}(y)))=\\ &= k_Z(\alpha_h(F_{N,N_h}(x)),\alpha_h(F_{N,N_h}(y)))= k_Z(\alpha_N(x),\alpha_N(y)).
\end{align*}
The proof for the Kobayashi metric $\kappa_X$ is similar.

\end{proof}

\begin{definition}
Let $X$ be a complex manifold and let $F\coloneqq(f_1,\dots, f_k)$ be a $k$-uple of commuting holomorphic self-maps of $ X$. A {\sl  quasi-model}  for $F$ is a triple $(\Lambda,h,\Phi)$ where $\Lambda$ is a complex manifold, $h\colon X\to \Lambda$ is a holomorphic map, and $\Phi\coloneqq(\v_1,\dots, \v_k)$ is a $k$-uple of commuting automorphisms of $\Lambda$ 
such that $$h\circ f_j=\v_j\circ h,\quad \forall\,1\leq j\leq k.$$
 Let $(\Lambda,h,\Phi)$  and $(Z,\ell,T\coloneqq(\tau_1,\dots,\tau_k))$ be two quasi-models for $F$. A {\sl morphism of  quasi-models} $\hat\eta\colon (Z,\ell,T)\to(\Lambda,h,\Phi)$ is given by a holomorphic map $\eta\colon Z\to \Lambda$ such that 
 $\eta\circ \ell=h$ and such that 
 $$\eta\circ \tau_j=\v_j\circ \eta,\quad \forall\,1\leq j\leq k.$$
If the map $\eta \colon Z\to \Lambda$ is a biholomorphism, then we say that $\hat\eta\colon  (Z,\ell,T)\to (\Lambda, h, \Phi)$ is an {\sl isomorphism of  quasi-models}. Notice that then $\eta^{-1}\colon \Lambda\to Z$ induces a morphism $ {\hat\eta}^{-1}\colon  (\Lambda, h, \Phi)\to  (Z,\ell,T). $
\end{definition}

\begin{definition}
Let $X$ be a complex manifold and let $F\coloneqq(f_1,\dots, f_k)$ be a $k$-uple of commuting holomorphic self-maps of $X$. A {\sl  semi-model}  for $F$ is a  quasi-model $(\Lambda,h,\Phi)$ such that 
 \begin{equation}\label{due}
 \bigcup_{N\in \N^k}\Phi^{- N}(h(X))=\Lambda.
 \end{equation}
 If $(Z,\ell,T)$ and $(\Lambda,h,\Phi)$ are two semi-models for $F$, then a  morphism  of  quasi-models
 $\hat\eta\colon  (Z,\ell,T)\to (\Lambda, h, \Phi)$ is called  a {\sl morphism of  semi-models}.
\end{definition}

\begin{proposition}\label{help}
Let $X$ be a complex manifold and let $F\coloneqq(f_1,\dots, f_k)$ be a $k$-uple of commuting holomorphic self-maps of $X$. Let $(Z,\ell,T)$ be a  semi-model for $F$, and let  $(\Lambda,h,\Phi)$ be a  quasi-model for $F$. Then 
\begin{enumerate}
\item there exists at most one morphism from $ (Z,\ell,T)$ to $(\Lambda, h, \Phi)$, and
\item if $\hat\eta\colon (Z,\ell,T)\to (\Lambda, h, \Phi)$ is a morphism, then $$\eta(Z)=\bigcup_{N\in \N^k}\Phi^{- N}(h(X)).$$

\end{enumerate}
\end{proposition}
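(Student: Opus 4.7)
The plan is to use the intertwining relations together with the surjectivity condition built into the definition of a semi-model to pin down the morphism on all of $Z$.

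First I would handle uniqueness. Suppose $\hat\eta_1, \hat\eta_2\colon (Z,\ell,T)\to(\Lambda,h,\Phi)$ are two morphisms, given by holomorphic maps $\eta_i\colon Z\to\Lambda$. Because each $\eta_i$ satisfies $\eta_i\circ\ell=h$, the two maps agree on $\ell(X)$. Since each $\tau_j$ and each $\v_j$ is an automorphism, the relation $\eta_i\circ\tau_j=\v_j\circ\eta_i$ can be iterated and inverted, yielding
\[
\eta_i\circ T^{-N}=\Phi^{-N}\circ\eta_i,\qquad\forall\,N\in\N^k.
\]
Given any $z\in Z$, the semi-model condition \eqref{due} applied to $(Z,\ell,T)$ provides $N\in\N^k$ and $x\in X$ with $z=T^{-N}(\ell(x))$. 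Then
\[
\eta_i(z)=\eta_i\bigl(T^{-N}(\ell(x))\bigr)=\Phi^{-N}\bigl(\eta_i(\ell(x))\bigr)=\Phi^{-N}(h(x)),
\]
which is independent of $i$. Hence $\eta_1=\eta_2$, proving (1).

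For (2), the same computation gives $\eta(z)=\Phi^{-N}(h(x))$ for appropriate $N$ and $x$, showing $\eta(Z)\subseteq\bigcup_{N\in\N^k}\Phi^{-N}(h(X))$. For the opposite inclusion, pick any point $\Phi^{-N}(h(x))$ in the right-hand side; using $h=\eta\circ\ell$ and the extended intertwining relation,
\[
\Phi^{-N}(h(x))=\Phi^{-N}\bigl(\eta(\ell(x))\bigr)=\eta\bigl(T^{-N}(\ell(x))\bigr)\in\eta(Z).
\]

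This argument is essentially a diagram chase, so I do not expect any serious obstacle; the only point requiring a little care is observing that $\eta\circ\tau_j=\v_j\circ\eta$ automatically promotes to negative exponents because both $\tau_j$ and $\v_j$ are genuine automorphisms, which is exactly what allows the semi-model condition on the \emph{source} to force uniqueness and to describe the image precisely.
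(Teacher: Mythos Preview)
Your proof is correct and follows essentially the same approach as the paper's: both arguments use the intertwining relation $\eta\circ T^{-N}=\Phi^{-N}\circ\eta$ (valid because the $\tau_j$ and $\v_j$ are automorphisms) together with the semi-model condition $Z=\bigcup_{N}T^{-N}(\ell(X))$ to force uniqueness and to compute the image. The paper's presentation is marginally more compact---it writes a single chain of equalities showing that any two morphisms agree on each $T^{-N}(\ell(X))$, and handles part (2) in one line---but the content is identical to yours.
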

\begin{proof}
$\,$
\begin{enumerate}
\item Let $\hat \eta,\hat \beta \colon (Z,\ell,T)\to (\Lambda, h, \Phi)$ be two morphisms. Then for all $N\in \N^k$, $$\eta\circ T^{-N}\circ \ell=\Phi^{-N}\circ \eta\circ \ell=\Phi^{-N}\circ h=\Phi^{-N}\circ \beta\circ \ell=\beta\circ T^{-N}\circ \ell.$$
Since $(Z,\ell,T)$ is a semi-model, $\bigcup_{N\in \N^k} T^{-N}(\ell (X))=Z$, hence $\eta=\beta$.
\item $$\bigcup_{N\in \N^k}\Phi^{- N}(h(X))=\bigcup_{N\in \N^k}\Phi^{-N}(\eta(\ell(X)))=\eta\left(\bigcup_{N\in \N^k} T^{-N}(\ell(X))\right).$$
\end{enumerate}
\end{proof}

\begin{definition} 
Let $X$ be a complex manifold and let  $F\coloneqq(f_1,\dots, f_k)$ be a $k$-uple of commuting holomorphic self-maps of $X$. Let $(Z, \ell,T)$ be a semi-model for $F$  whose base space $Z$ is Kobayashi hyperbolic. We say that  $(Z, \ell,T)$ is a {\sl  canonical Kobayashi hyperbolic  semi-model} for  $F$ (for short  CSM)  if  it satisfies the following universal property: for any  semi-model
$(\Lambda, h,\Phi )$ for $F$ such that the base space $\Lambda$ is Kobayashi hyperbolic  there exists a  morphism of  semi-models $\hat\eta\colon (Z, \ell,T)\to (\Lambda, h,\Phi )$ (which is necessarily unique by Proposition \ref{help}).
 \end{definition}
 
 \begin{remark}\label{tango}
If $(Z, \ell,T)$ and $(\Lambda, h,\Phi )$ are two  CSM for  $F$, then they are isomorphic.
\end{remark}

\begin{theorem}\label{principaleforward}
 Let $X$ be a cocompact  Kobayashi hyperbolic complex manifold, and  let  $F\coloneqq(f_1,\dots, f_k)$ be a $k$-uple of commuting holomorphic self-maps of $X$. Then there exists a   CSM $(Z,\ell,T)$ for $F$, where $Z$ is a holomorphic retract of $X$.
Moreover, for all $N\in \N^k$,
\begin{equation}\label{kobdyn}
\lim_{ M \to\infty}  (F^M)^*\, k_X=(T^{- N}\circ \ell)^* \, k_Z,\quad
\lim_{M \to\infty}  (F^M)^*\, \kappa_X=(T^{- N} \circ\ell)^* \, \kappa_Z.
\end{equation}
\end{theorem}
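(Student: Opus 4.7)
The plan is to reduce the theorem to Lemma \ref{directlimit} applied to the holomorphic $k$-dynamical system $F_{N,M} := F^{M-N}$ naturally associated with $F$. That lemma produces a Kobayashi hyperbolic complex manifold $Z$, a holomorphic retract of $X$, together with a family $(\alpha_N \colon X \to Z)_{N \in \N^k}$ satisfying $\alpha_M \circ F^{M-N} = \alpha_N$, the covering identity $\bigcup_N \alpha_N(X) = Z$, and the metric limits \eqref{eqdir3}. I would then set $\ell := \alpha_0$ and manufacture the remaining structure from the universal property of the CDL.

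To produce the $k$-uple $T = (\tau_1, \ldots, \tau_k)$, I would apply the universal property of the CDL twice for each index $j$. The family $(\alpha_N \circ f_j)_N \colon X \to Z$ is compatible because $f_j$ commutes with every $F^{M-N}$, and it yields a holomorphic $\tau_j \colon Z \to Z$ with $\tau_j \circ \alpha_N = \alpha_N \circ f_j$ for every $N$; the family $(\alpha_{N+E_j})_N \colon X \to Z$ is compatible via the trivial identity $(M+E_j)-(N+E_j)=M-N$, and it yields a holomorphic $\sigma_j \colon Z \to Z$ with $\sigma_j \circ \alpha_N = \alpha_{N+E_j}$. The dynamical relation $\alpha_{N+E_j} \circ f_j = \alpha_N$ combined with these two definitions then gives
\[
\sigma_j \circ \tau_j \circ \alpha_N = \alpha_N = \tau_j \circ \sigma_j \circ \alpha_N \qquad \text{for every } N,
\]
and the uniqueness clause of the universal property (applied to the factorization of $(\alpha_N)$ through itself) forces $\sigma_j = \tau_j^{-1}$. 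A parallel chase promotes $f_i \circ f_j = f_j \circ f_i$ to $\tau_i \circ \tau_j = \tau_j \circ \tau_i$. An induction on $|N|$ using $\sigma_j \circ \alpha_N = \alpha_{N+E_j}$ then yields $T^{-N} \circ \ell = \alpha_N$ for every $N \in \N^k$, so the semi-model condition $\bigcup_N T^{-N}(\ell(X)) = Z$ is just a rewriting of \eqref{eqdir2}.

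For the CSM universal property, given an arbitrary semi-model $(\Lambda, h, \Phi)$ with $\Lambda$ Kobayashi hyperbolic, I would form the auxiliary family $g_N := \Phi^{-N} \circ h \colon X \to \Lambda$. The identity $h \circ F^{M-N} = \Phi^{M-N} \circ h$ gives $g_M \circ F^{M-N} = g_N$, so the universal property of the CDL supplies a unique $\eta \colon Z \to \Lambda$ with $g_N = \eta \circ \alpha_N$; in particular $\eta \circ \ell = g_0 = h$. The direct computation
\[
\eta \circ \tau_j \circ \alpha_N = \eta \circ \alpha_N \circ f_j = \Phi^{-N} \circ h \circ f_j = \Phi^{-N} \circ \varphi_j \circ h = \varphi_j \circ \eta \circ \alpha_N,
\]
together with the covering $Z = \bigcup_N \alpha_N(X)$, forces $\eta \circ \tau_j = \varphi_j \circ \eta$. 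Finally, the metric limits \eqref{kobdyn} will follow from \eqref{eqdir3} by the substitution $M \mapsto M - N$ together with the identity $T^{-N} \circ \ell = \alpha_N$ established above.

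I expect no conceptual obstacle. The only point genuinely requiring care is the systematic use of the uniqueness clause in the universal property to deduce algebraic identities on $Z$ (invertibility of $\tau_j$, commutation, and uniqueness of $\eta$), since the images $\alpha_N(X)$ need not be open and thus relations among holomorphic maps on $Z$ cannot be extracted pointwise from relations on each $\alpha_N$ without this uniqueness.
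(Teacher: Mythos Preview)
Your proposal is correct and follows essentially the same route as the paper's proof: both apply Lemma~\ref{directlimit} to the $k$-dynamical system $F_{N,M}=F^{M-N}$, set $\ell:=\alpha_0$, build each $\tau_j$ and its inverse (your $\sigma_j$, the paper's $\delta_j$) via the CDL universal property, derive $\alpha_N=T^{-N}\circ\ell$, and obtain the morphism $\eta$ from the family $\Phi^{-N}\circ h$. The only cosmetic difference is that for the intertwining $\eta\circ\tau_j=\varphi_j\circ\eta$ the paper invokes the uniqueness clause once more, whereas you use the covering $Z=\bigcup_N\alpha_N(X)$ directly; both are valid (and in fact your final caveat is overly cautious, since the covering identity \eqref{eqdir2} already makes the pointwise argument work for all the algebraic relations you list).
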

\begin{proof}
Let $(Z,\alpha_N)$ be the CDL given by Lemma \ref{directlimit} for the   holomorphic $k$-dynamical system associated with $F$.
Let $1\leq j\leq k$. The $k$-sequence of holomorphic maps $(\beta_N\coloneqq\alpha_N\circ F^{E_j}\colon X\to Z)_{N\in \N^k}$ satisfies, for all $M\geq N\in \N^k$,
$$\beta_M\circ F_{M-N}=\alpha_M\circ F^{E_j}\circ F^{M-N}=\alpha_M\circ F^{M-N}\circ F^{E_j}=\alpha_N\circ F^{E_j}=\beta_N.$$
By the universal property in Definition \ref{directdef} there exists a unique holomorphic self-map $\tau_j\colon Z\to Z$ such that for all $N\in \N^k$, 
 $$\tau_j\circ \alpha_N=\alpha_N\circ F^{E_j}.$$
We claim that $\tau_j$ is a holomorphic automorphism. For all $N\in \N^k$, set $\gamma_N\coloneqq \alpha_{N+{E_j}}$. For all $M\geq N\in \N^k$,
$$\gamma_M\circ F_{M-N}=\alpha_{M+{E_j}}\circ F^{M-N}=\alpha_{N+{E_j}}=\gamma_N.$$
Thus there exists a holomorphic self-map $\delta_j\colon Z\to Z$ such that $\delta_j\circ \alpha_N=\alpha_{N+{E_j}}$ for all $N\in \N^k$.
For all $N\in \N^k$ we have $$\tau_j\circ\delta_j\circ \alpha_N=\tau_j\circ \alpha_{N+{E_j}}=\alpha_N,$$ and
$$\delta_j\circ \tau_j\circ \alpha_N=\delta_j\circ \alpha_N\circ F^{E_j}=\alpha_{N+{E_j}}\circ F^{E_j}=\alpha_N.$$ By the universal property in Definition \ref{directdef} we have that $\tau_j$ is a holomorphic automorphism and $\delta_j=\tau_j^{-1}$. 
Set $T\coloneqq (\tau_1,\dots,\tau_k)$. Since for all $N\in \N^k$, $$T^N\circ \alpha_N=\alpha_N\circ F^N=\alpha_0,$$ it follows that $\alpha_N=T^{-N}\circ \alpha_0$.

Set $\ell\coloneqq \alpha_0$. We claim that the triple $(Z,\ell, T)$ is a  CSM for $F$. 
Indeed, let $(\Lambda, h,\Phi )$ be a  semi-model for $F$ such that the base space $\Lambda$ is Kobayashi hyperbolic. For all $N\in \N^k$, let $\lambda_N\coloneqq  \Phi^{-N}\circ h$. Then by  the universal property in Definition \ref{directdef} there exists a unique holomorphic map $\eta\colon Z\to \Lambda$ such that for all $N\in \N^k$ we have $\eta\circ \alpha_N= \lambda_N,$
that is $$\eta\circ T^{-N}\circ \ell= \Phi^{-N}\circ h.$$
Setting $N=0$ this implies  $\eta\circ \ell=h$.
Now fix $1\leq j\leq k$.
If $N\in \N^k$, $$\Phi^{E_j}\circ\eta\circ T^{-{E_j}}\circ \alpha_N=\Phi^{E_j}\circ\eta\circ T^{-{E_j}}\circ T^{-N}\circ \ell=\Phi^{E_j}\circ \Phi^{-N-{E_j}}\circ h=\lambda_N.$$
By the universal property in Definition \ref{directdef}, we have $$\eta=\Phi^{E_j}\circ\eta\circ T^{-{E_j}}.$$
 Hence the map $\eta\colon Z\to \Lambda$ gives a morphism of semi-models $\hat\eta\colon  (Z,\ell,T)\to(\Lambda, h,\Phi )$.
\end{proof}

\begin{remark}\label{defgiusta}
 Let $X$ be a cocompact  Kobayashi hyperbolic complex manifold, and  let  $F\coloneqq(f_1,\dots, f_k)$ be a $k$-uple of commuting holomorphic self-maps of $X$. Let $(Z,\ell,T)$ be  a CSM for $F$.
The proof of Theorem \ref{principaleforward} actually shows that $(Z,\ell,T)$ satisfies the following slightly stronger universal property:
for any  quasi-model
$(\Lambda, h,\Phi )$ for $F$ such that the base space $\Lambda$ is Kobayashi hyperbolic,  there exists a  morphism of  quasi-models $\hat\eta\colon (Z, \ell,T)\to (\Lambda, h,\Phi )$ (which is necessarily unique by Proposition \ref{help}).
\end{remark}

\begin{definition}
Let $X$ be a  complex manifold, and  let  $F\coloneqq(f_1,\dots, f_k)$ be a $k$-uple of commuting holomorphic self-maps of $X$. Let $x\in X$, and let $M\in \N^k$. We define the {\sl M-step} of $F$ at $x$ as  $$s^F_M(x)\coloneqq \lim_{N\to\infty}k_X(F^N(x),F^{N+M}(x)).$$
Notice that the limit exists and $s^F_M(x)\in [0,+\infty)$ since $k_X(F^N(x),F^{N+M}(x))_{N\in\N^k}$ is a monotone non increasing $k$-sequence.
 \end{definition}

\begin{definition}
Let $X$ be a complex manifold. Let $f\colon X\to X$ be a holomorphic self-map.
The {\sl divergence rate} $c(f)\in [0,+\infty)$ of $f$ is defined as
$$c(f)\coloneqq \lim_{m\to\infty}\frac{k_{X}(f^m(x),x)}{m},$$ where $x\in X$.
It is shown in \cite{AB} that such a limit exists and does not depend on $x\in X$, and moreover
 $$c(f)=\inf_{m\in \N}\frac{k_X(f^m(x),x)}{m}=\lim_{m\to\infty}\frac{s^f_m(x)}{m}=\inf_{m\in \N}\frac{s^f_m(x)}{m}.$$
\end{definition}

\begin{lemma}\label{divcomm}
Let $X$ be a cocompact  Kobayashi hyperbolic complex manifold, and  let  $F\coloneqq(f_1,\dots, f_k)$ be a $k$-uple of commuting holomorphic self-maps of $X$. Let $(Z,\ell,T)$ be a CSM for $F$, and let $M\in \N^k$. Then for all $x\in X$, $$k_Z(\ell(x), T^M(\ell(x)))=s^F_M(x).$$
In particular, for all $1\leq j\leq k$, 
\begin{equation}\label{partdivtau}
c(\tau_j)=\lim_{m\to\infty} \frac{s^F_{mE_j}(x)}{m}=\inf_{m\to\infty} \frac{s^F_{mE_j}(x)}{m} .
\end{equation}
\end{lemma}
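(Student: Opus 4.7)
The plan is to reduce both assertions to the convergence formula \eqref{kobdyn} from Theorem \ref{principaleforward}. Since each $\tau_j$ is an automorphism of $Z$, the product $T^{-N}$ is a $k_Z$-isometry, so the right-hand side of \eqref{kobdyn} is actually independent of $N$. Specializing to $N=0$ therefore yields the more transparent identity
\[
\lim_{N\to\infty} k_X(F^N(a), F^N(b)) \;=\; k_Z(\ell(a), \ell(b)), \qquad a,b\in X.
\]

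For the first equality of the lemma, I would apply this identity with $a=x$ and $b=F^M(x)$. Because the $f_j$ commute, $F^{N+M}=F^N\circ F^M$, so the left-hand side becomes $\lim_{N\to\infty} k_X(F^N(x), F^{N+M}(x))$, which is exactly $s^F_M(x)$ by definition of the $M$-step. The right-hand side equals $k_Z(\ell(x), \ell(F^M(x)))$, and the intertwining relations $\ell\circ f_j=\tau_j\circ \ell$ that hold for the CSM iterate to $\ell\circ F^M=T^M\circ \ell$, so this simplifies to $k_Z(\ell(x), T^M\ell(x))$, as required.

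The \emph{in particular} statement is then a direct consequence. Specializing $M=mE_j$ and noting that $T^{mE_j}=\tau_j^m$ yields $s^F_{mE_j}(x)=k_Z(\ell(x),\tau_j^m\ell(x))$. Applying the definition of divergence rate to the holomorphic automorphism $\tau_j$ of the Kobayashi hyperbolic complex manifold $Z$, evaluated at the base point $\ell(x)$, gives
\[
c(\tau_j)=\lim_{m\to\infty}\frac{k_Z(\tau_j^m \ell(x),\ell(x))}{m}=\inf_{m\in\N}\frac{k_Z(\tau_j^m \ell(x),\ell(x))}{m},
\]
and substituting the previous identity produces \eqref{partdivtau}. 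I do not foresee any genuine obstacle: the whole lemma amounts to unpacking \eqref{kobdyn} together with the intertwining relations that define the CSM.
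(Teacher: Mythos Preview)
Your proof is correct and follows essentially the same route as the paper: both invoke \eqref{kobdyn}, use that $T^{-N}$ is a $k_Z$-isometry to reduce to $N=0$, apply the resulting identity to the pair $(x,F^M(x))$, and then rewrite via the intertwining $\ell\circ F^M=T^M\circ\ell$. Your treatment of the ``in particular'' clause is more explicit than the paper's (which leaves it to the reader), but the argument is the same.
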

\begin{proof}
By (\ref{kobdyn}),

$$k_Z(\ell(x), T^M(\ell(x)))=k_Z(\ell(x),\ell(F^M(x))=\lim_{U\to\infty}k_X(F^U(x), F^{U+M}(x))=s^F_M(x).$$

\end{proof}

\begin{remark}\label{giants}
Notice that in general, if  $1\leq j\leq k$, 
$$s^F_{mE_j}(x)=\lim_{\N^{k-1}\ni Q\to\infty}s_m^{f_j}(\widehat F_j^Q(x))\leq s_m^{f_j}(x),$$
where $\widehat F_j$ denotes the $(k-1)$-uple $(f_1,\dots,f_{j-1}, f_{j+1},\dots,f_k)$.
\end{remark}

\begin{proposition}\label{disdiv}
Let $X$ be a cocompact  Kobayashi hyperbolic complex manifold, and  let  $F\coloneqq(f_1,\dots, f_k)$ be a $k$-uple of commuting holomorphic self-maps of $X$. Let $(Z,\ell,T)$ be a CSM for $F$. Then
for all $1\leq j\leq k$, $$c(\tau_j)=c(f_j).$$
\end{proposition}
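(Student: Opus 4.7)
The plan is to prove the two inequalities $c(\tau_j)\leq c(f_j)$ and $c(\tau_j)\geq c(f_j)$ separately, in both cases using the identity $k_Z(\ell(x),T^{mE_j}(\ell(x)))=s^F_{mE_j}(x)$ from Lemma \ref{divcomm}, which gives
\[
c(\tau_j)=\lim_{m\to\infty}\frac{s^F_{mE_j}(x)}{m}.
\]

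For the upper bound, I would apply Remark \ref{giants}, which gives $s^F_{mE_j}(x)\leq s^{f_j}_m(x)$. Dividing by $m$ and passing to the limit yields
\[
c(\tau_j)=\lim_{m\to\infty}\frac{s^F_{mE_j}(x)}{m}\leq \lim_{m\to\infty}\frac{s^{f_j}_m(x)}{m}=c(f_j).
\]

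For the lower bound, the key observation is that the definition of the divergence rate provides the infimum characterization $c(f_j)=\inf_{m\in\N}s^{f_j}_m(y)/m$, valid for every $y\in X$. In particular $s^{f_j}_m(y)\geq m\,c(f_j)$ for every $y\in X$ and every $m\in\N$. Applying this at the points $y=\widehat F_j^Q(x)$ and using the expression from Remark \ref{giants}
\[
s^F_{mE_j}(x)=\lim_{\N^{k-1}\ni Q\to\infty}s^{f_j}_m(\widehat F_j^Q(x))\geq m\,c(f_j),
\]
so that $s^F_{mE_j}(x)/m\geq c(f_j)$ for every $m\in\N$. Passing to the limit as $m\to\infty$ gives $c(\tau_j)\geq c(f_j)$.

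There is no substantial obstacle: the proof is a short combination of Lemma \ref{divcomm}, Remark \ref{giants}, and the infimum formula for the divergence rate recalled in the definition above. The only point worth spelling out carefully is that, since the divergence rate of $f_j$ does not depend on the base point, the lower bound $s^{f_j}_m(y)\geq m\,c(f_j)$ holds uniformly in $y$, which is exactly what is needed to push the inequality through the limit in $Q$ defining $s^F_{mE_j}(x)$.
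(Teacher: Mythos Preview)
Your proof is correct and follows essentially the same route as the paper. The only cosmetic difference is in the upper bound: the paper dismisses $c(\tau_j)\leq c(f_j)$ as ``clear'' (it follows immediately from $\ell\circ f_j=\tau_j\circ\ell$ and the fact that holomorphic maps are Kobayashi-decreasing), whereas you derive it from Remark \ref{giants}; the lower bound argument via the uniform estimate $s^{f_j}_m(y)\geq m\,c(f_j)$ pushed through the limit in $Q$ is identical to the paper's.
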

\begin{proof}
Clearly $c(\tau_j)\leq c(f_j).$
For all $y\in X$ we have that $c(f_j)=\inf_{m\in \N}\frac{s^{f_j}_m(y)}{m},$  in particular, for all $m\in \N$ we have  $\frac{s^{f_j}_m(y)}{m}\geq c(f_j).$
Thus, by Remark \ref{giants}, $$\frac{s^F_{mE_j}(x)}{m}=\lim_{\N^{k-1}\ni Q\to\infty}\frac{s_m^{f_j}(\widehat F_j^Q(x))}{m}\geq c(f_j),$$
and the result follows then from (\ref{partdivtau}).
\end{proof}

\begin{definition}
Let $X$ be  a cocompact Kobayashi hyperbolic complex manifold. 
Let  $F\coloneqq(f_1,\dots, f_k)$ be a $k$-uple of commuting holomorphic self-maps of $X$.
The {\sl type} of $F$ (denoted ${\rm type}(F)$) is the dimension of a CSM for $F$. 
Clearly ${\rm type}(F)\leq {\rm dim }(X)$.
If ${\rm type}(F)={\rm dim }(X)$, then we say that $F$ is of {\sl automorphism type}. 
\end{definition}

Proposition \ref{disdiv} immediately yields the following corollary.
\begin{corollary}\label{dimensionemaggioreuno}
Let $X$ be a cocompact  Kobayashi hyperbolic complex manifold, and  let  $F\coloneqq(f_1,\dots, f_k)$ be a $k$-uple of commuting holomorphic self-maps of $X$ and assume that at least one of the $f_j$'s is hyperbolic.
Then ${\rm type}(F)\geq 1$.
\end{corollary}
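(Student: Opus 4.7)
The plan is very short, since Proposition \ref{disdiv} does almost all the work. Let $(Z,\ell,T)$ denote a CSM for $F$ with $T = (\tau_1,\dots,\tau_k)$, and suppose $f_j$ is the hyperbolic map among the $f_i$'s.

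First I would invoke the standard fact that a hyperbolic self-map has strictly positive divergence rate, i.e.\ $c(f_j) > 0$ (in the ball case this comes from the dilation at the Denjoy--Wolff point being strictly less than $1$; in the general cocompact setting this is what is meant by ``hyperbolic''). Then Proposition \ref{disdiv} gives
\[
c(\tau_j) \;=\; c(f_j) \;>\; 0.
\]

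Next I would argue that this forces $\dim Z \geq 1$. Indeed, if $Z$ were zero-dimensional then, being a Kobayashi hyperbolic holomorphic retract of the connected manifold $X$, it would reduce to a single point, and every automorphism of a point is the identity. But for the identity map on any complex manifold the Kobayashi distance between iterates is identically $0$, so $c(\id) = 0$, contradicting $c(\tau_j) > 0$. Hence $\dim Z \geq 1$, i.e.\ ${\rm type}(F) \geq 1$.

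There is no real obstacle here: the only substantive input is the inequality $c(f_j) > 0$ for a hyperbolic map, which is part of the standing hypothesis/terminology, and the transport of the divergence rate to the semi-model automorphism via Proposition \ref{disdiv}. The rest is a one-line contradiction with the trivial observation that $c$ vanishes on the identity of a point.
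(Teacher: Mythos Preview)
Your proof is correct and follows exactly the route the paper intends: the paper simply states that the corollary is immediate from Proposition \ref{disdiv}, and you have spelled out the one-line argument (namely $c(\tau_j)=c(f_j)>0$ is impossible if $Z$ is a point). There is nothing to add.
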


\begin{definition}
Let $X$ be a Kobayashi hyperbolic complex manifold, let $p\in X$ and let $r>0$.
We denote by $B(p,r)$ the  ball of center $p$ and radius $r$ with respect to the Kobayashi distance $k_X$, in other words, $B(p,r):=\{z\in X: k_X(z,p)<r\}$.
\end{definition}

\begin{proposition}\label{iltipo}
 Let $X$ be a cocompact  Kobayashi hyperbolic complex manifold, and  let $F\coloneqq(f_1,\dots, f_k)$ be a $k$-uple of commuting holomorphic self-maps of $X$. Let $(Z,\ell,T)$ be a CSM for $F$, and let $d={\rm type}(F)$ be the dimension of $Z$. Then for all  $p\in X$ there exists $r>0$ and  $N_0\in \N^k$ such that for all $ N\geq N_0\in \N^k$
 \begin{enumerate}
 \item  ${\rm rk}_y\ell=d, \quad \forall \,y\in B(F^N(p),r)$,
  \item ${\rm rk}_yF^M\geq d, \quad \forall \,M\in \N^k, y\in B(F^N(p),r).$
  \end{enumerate}
\end{proposition}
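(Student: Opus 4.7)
The plan is to reduce (2) to (1), identify an open dense locus $U=\{y\in X:\mathrm{rk}_y\ell=d\}$, show the orbit $\{F^N(p)\}$ eventually lies in $U$, and extract a uniform radius via cocompactness.

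For the reduction, differentiating the equivariance $\ell\circ F^M=T^M\circ\ell$ at $y$ gives $d_{F^M(y)}\ell\circ d_yF^M=d_{\ell(y)}T^M\circ d_y\ell$; since $T^M\in\Aut(Z)$ has invertible differential, this forces $\mathrm{rk}_yF^M\geq\mathrm{rk}_y\ell$, and (1) implies (2). Openness of $U$ is the lower semi-continuity of rank for holomorphic maps (its complement is locally cut out by the $d\times d$ minors of the Jacobian of $\ell$), and nonemptiness follows from Lemma \ref{directlimit}: since $Z=\bigcup_{N\in\N^k}\alpha_N(X)$ is a Baire space, some $\alpha_{N_1}(X)$ has nonempty interior in $Z$, and since $\alpha_{N_1}=T^{-N_1}\circ\ell$ differs from $\ell$ by an automorphism of $Z$, $\ell$ also attains rank $d$ there; density of $U$ is then automatic, as $X\setminus U$ is a proper closed analytic subset. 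The same factorization $\ell=\alpha_M\circ F^M$ applied at $y\in U$ yields $d=\mathrm{rk}_y\ell\leq\mathrm{rk}_{F^M(y)}\alpha_M=\mathrm{rk}_{F^M(y)}\ell$, so $U$ is forward $F$-invariant and the integer-valued $k$-sequence $N\mapsto\mathrm{rk}_{F^N(p)}\ell$ is non-decreasing on $\N^k$, hence stabilizes to some $r_p^\infty\leq d$.

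The heart of the argument is showing $r_p^\infty=d$. I would argue by contradiction using \eqref{kobdyn}: if $r_p^\infty<d$, then for $N\geq N_0$ the kernel of $d_{F^N(p)}\ell$ has dimension $q-r_p^\infty>q-d$, and the identity $\lim_M(F^M)^*\kappa_X=\ell^*\kappa_Z$ forces $\kappa_X\bigl(F^{M+N}(p),d_{F^N(p)}F^M(v)\bigr)\to 0$ as $M\to\infty$ for every $v$ in this kernel. Using cocompactness to precompose with automorphisms $\phi_{M+N}\in\Aut(X)$ that carry $F^{M+N}(p)$ into a fixed compact $K\subset X$, one extracts via Montel a holomorphic normal limit $\Psi$ of the family $\phi_{M+N}\circ F^M$ on a neighborhood of $F^N(p)$, whose differential at $F^N(p)$ vanishes on this $>(q-d)$-dimensional subspace. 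Combining $\Psi$ with the corresponding limit of the equivariant maps $\phi_{M+N}\circ\alpha_{M+N}$ allows one to assemble a new Kobayashi hyperbolic semi-model for $F$ whose base has dimension strictly less than $d$, contradicting $\mathrm{type}(F)=d$ and the universal property of the CSM.

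Once $r_p^\infty=d$, so $F^N(p)\in U$ for $N\geq N_0$, the uniform radius is extracted by the same normalization: the family $\alpha_N\circ\phi_N^{-1}:X\to Z$ sends the points $\phi_N(F^N(p))\in K$ to the fixed point $\ell(p)\in Z$; Montel applied near each limit point $q_\infty\in K$ of this compact sequence, combined with the strong form of the rank-equality argument above, produces a subsequential limit $\beta$ with full rank $d$ on a uniform Kobayashi ball $B(q_\infty,\rho)$. Covering the compact limit set by finitely many such balls and transferring back through the Kobayashi isometries $\phi_N$ yields the required uniform $r>0$ with $B(F^N(p),r)\subset U$ for $N\geq N_0$. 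The main obstacle throughout is the rank-equality step: converting pointwise convergence of pulled-back Kobayashi metrics into a rigid rank statement appears to genuinely require both cocompactness of $X$ (to invoke normal families) and the universal property characterizing the dimension $d$ of the CSM.
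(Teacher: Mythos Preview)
Your reduction of (2) to (1), the setup of the open locus $U=\{\mathrm{rk}_y\ell=d\}$, and the monotonicity of the rank along orbits are all fine and match the paper in spirit. The gap is in the ``heart'' step. First, the composition $\phi_{M+N}\circ\alpha_{M+N}$ is ill-typed: $\alpha_{M+N}$ lands in $Z$, not in $X$. More seriously, even if you could assemble from your Montel limit a Kobayashi hyperbolic semi-model with base of dimension strictly less than $d$, this would not contradict anything. The universal property of the CSM says that $(Z,\ell,T)$ maps \emph{to} every Kobayashi hyperbolic semi-model $(\Lambda,h,\Phi)$, and by Proposition~\ref{help}(2) that map is onto $\Lambda$; hence every such $\Lambda$ has dimension $\leq d$, and lower-dimensional ones (e.g.\ the trivial one on a point) always exist. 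So the direction of your contradiction is reversed. A related issue infects the uniform-radius extraction: knowing each normalized map $\alpha_N\circ\phi_N^{-1}$ has rank $d$ at $\phi_N(F^N(p))$ does \emph{not} force a Montel limit $\beta$ to have rank $d$ at $q_\infty$, since rank can drop under locally uniform limits (think $z\mapsto z/n$).

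The paper's proof runs the contradiction the other way around and leans on the explicit construction of the CDL from \cite{A}. Assuming a bad subsequence $(N_h)$ along which $\ell$ fails to have rank $d$ on shrinking balls $B(F^{N_h}(p),1/h)$, one normalizes by $\gamma_N\in\Aut(X)$ with $\gamma_N(F^N(p))\in K$ and rebuilds the CDL $(A,\tilde\alpha_N)$ for the normalized system; by construction the maps $\tilde\alpha_{N_h}$ converge uniformly on compacta to a holomorphic retraction $\tilde\alpha$ onto the $d$-dimensional retract $A$. The point is that the \emph{limit} $\tilde\alpha$ is a priori known to have rank $d$ on $A$, hence on a neighborhood $U$ of $A$; lower semicontinuity of rank (in the correct direction: from limit to approximants) then forces $\mathrm{rk}\,\tilde\alpha_{N_h}=d$ on a fixed compact $V\ni\tilde\alpha_0(\gamma_0(p))$ for all large $h$. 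Since $\tilde\alpha_{N_h}$ and $\ell$ differ by automorphisms, this contradicts the assumed rank drop on $B(F^{N_h}(p),1/h)$. To repair your argument you would need to produce, from your Montel data, something playing the role of that limiting retraction onto a $d$-dimensional target---i.e.\ essentially to redo the CDL construction---rather than aim for a lower-dimensional semi-model.
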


\begin{proof}
We first prove (1) by contradiction. Fix $p\in X$. 
Assume that there exists a strictly increasing sequence $(N_h)_{h\in \N}$ in $\N^k$ with $N_0=0$ converging to $\infty$ such that for all $h\geq 1$ the mapping $\ell$ has not rank identically $d$ on  $B(F^{N_h}(p),\frac{1}{h})$.

Let $K$ be a compact subset in $X$ such that $X={\rm aut}(X)\cdot K$. 
 For all $N\in \N$, let  $\gamma_N$ be an automorphism of $X$ such that $\gamma_N(F^{N}(p))\in K$. 
Define a  holomorphic $k$-dynamical system $(\tilde F_{N,M})$ in the following way:
 for all $M\geq N\in \N^k$ ,$$\tilde F_{N,M}\coloneqq \gamma_M\circ F^{M-N}\circ \gamma_N^{-1}.$$
Define a  holomorphic dynamical system $( g_{h,j})$ in the following way:
for all $j\geq h\in \N$, $$  g_{h,j}\coloneqq \tilde F_{N_h,N_j}.$$

 The dynamical system $( g_{h,j}\colon X\to X)$ admits a relatively compact orbit. Indeed $g_{0,j}(\gamma_0(p))\subset K$ for all $j\in \N$.
Let  $(A, \tilde \alpha_h)$ be the CDL  for  $(g_{h,j})$ given by \cite[Section 2]{A}. 
This extends,  as in Lemma \ref{directlimit}, to  a CDL   $(A, \tilde \alpha_N)$ for $(\tilde F_{N,M})$ satisfying $\tilde \alpha_h=\tilde \alpha_{N_h}$ for all $h\in \N$. Clearly, $(A, \alpha_N\coloneqq \tilde \alpha_N\circ \gamma_N^{-1})$ is a CDL for $(F^{M-N})$.

Notice  that, as $h\to\infty$, the map $\tilde \alpha_{N_h}$ converges uniformly on compacta  to a  retraction $\tilde\alpha\colon X\to X$ of rank $d$ such that $\tilde \alpha(X)=A$.
Since $\tilde \alpha$ has rank $d$ on $A$, there exists a neighborhood $U$ of $A$ such that  ${\rm rk}_w(\tilde \alpha)=d$ for all $w\in U$.
Up to passing to a subsequence, we may assume that the sequence $(\tilde F_{0,N_h}(\gamma_0(p)))_{h\geq 0}$ converges to $\tilde \alpha_0(\gamma_0(p))\in A$ as $h\to \infty$.
There exists  a neighborhood $V\subset \subset U$ of  $\tilde\alpha_0(\gamma_0(p))$ and $\bar h\in \N$ such that ${\rm rk}_w(\tilde \alpha_{N_h})=d$ for all $h\geq \bar h$ and all $w\in V$.

There exists an index $h_0\geq \bar h$ and $r>0$ such that for all $h\geq h_0$, 
the Kobayashi ball $B(\tilde F_{0,N_{h}}(\gamma_0(p)),r)$  is contained in $V$
and thus
\begin{equation}\label{whiskey}
{\rm rk}_w(\tilde \alpha_{N_h})=d,\quad \forall\,w\in B(\tilde F_{0,N_{h}}(\gamma_0(p)),r).
\end{equation}

Since for all $h\geq 1$ the mapping $\ell$ has not rank identically $d$ on  $B(F^{N_h}(p),\frac{1}{h})$, we have that 
for all $h\geq 1$ the mapping   $\alpha_{N_h}$ has not rank identically $d$ on   $B(F^{N_h}(p),\frac{1}{h})$, and thus for all $h\geq 1$
$\tilde \alpha_{N_h}$ has not rank identically $d$ on $B(\tilde F_{0,N_{h}}(\gamma_0(p)),\frac{1}{h})$, which  contradicts (\ref{whiskey}). This proves (1). Statement (2) follows immediately from $\ell\circ F^M=T^M\circ\ell.$

\end{proof}

\begin{proposition}\label{univ}
 Let $X$ be a cocompact  Kobayashi hyperbolic complex manifold, and  let $F\coloneqq(f_1,\dots, f_k)$ be a $k$-uple of commuting holomorphic self-maps of $X$. Assume that $F$ is of automorphism type, and let $(Z,\ell,T)$ be a CSM for $F$. Then for all $r>0$ and all $p\in X$ there exists $N_0\in \N^k$ such that for all $ N\geq N_0\in \N^k$
\begin{enumerate}
\item  the map $\ell$ is univalent on   $B(F^N(p),r)$,
\item the  map $F^M$ is univalent on   $B(F^N(p),r)$ for all $M\in \N^k$.
\end{enumerate}
\end{proposition}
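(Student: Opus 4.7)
The plan is to reduce statement (2) to statement (1) and then prove (1) by a contradiction argument based on the renormalization technique already used in the proof of Proposition \ref{iltipo}. For the reduction: if $y, y' \in B(F^N(p),r)$ satisfy $F^M(y)=F^M(y')$, applying $\ell$ gives $T^M(\ell(y))=T^M(\ell(y'))$, hence $\ell(y)=\ell(y')$ since $T^M\in \Aut(Z)$; once (1) is known on $B(F^N(p),r)$, this forces $y=y'$. So the real content lies in (1).

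For (1), I would suppose for contradiction that there exist $r>0$, $p\in X$ and a sequence $N_h\to\infty$ in $\N^k$ such that $\ell$ fails to be univalent on $B(F^{N_h}(p),r)$. Following the proof of Proposition \ref{iltipo}, fix a compact $K$ with $X=\Aut(X)\cdot K$, pick $\gamma_N\in\Aut(X)$ with $\gamma_N(F^N(p))\in K$, and form the renormalized holomorphic $k$-dynamical system $\tilde F_{N,M}:=\gamma_M\circ F^{M-N}\circ \gamma_N^{-1}$ together with its CDL $(A,\tilde\alpha_N)$. The CDL $(Z,\alpha_N)$ underlying $(Z,\ell,T)$, in which $\alpha_N=T^{-N}\circ \ell$, is related to $(A,\tilde\alpha_N)$ by the $k_X$-isometries $\gamma_N$, so univalence of $\ell$ on $B(F^N(p),r)$ is equivalent to univalence of $\tilde\alpha_N$ on the translated ball $B(\gamma_N F^N(p),r)$, whose center lies in the fixed compact $K$.

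The crucial step exploits the automorphism type hypothesis $d=q$. Along a convergent subsequence, $\tilde\alpha_{N_h}$ converges uniformly on compacta to a holomorphic retraction $\tilde\alpha\colon X\to A\subset X$ of rank $q$. Since $\tilde\alpha|_A=\id_A$ and $\dim A=\dim X$, the set $A$ is open (by the rank theorem at points of $A$) and closed (as the fixed-point set of $\tilde\alpha$), so the connectedness of $X$ forces $A=X$ and therefore $\tilde\alpha=\id_X$. Passing to a further subsequence, $\gamma_{N_h}F^{N_h}(p)\to w\in K$, so the balls $B(\gamma_{N_h}F^{N_h}(p),r)$ eventually lie in a fixed compact $L\subset X$ (cocompactness of $X$ gives completeness of $k_X$ and hence relative compactness of Kobayashi balls). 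Uniform convergence $\tilde\alpha_{N_h}\to \id_X$ on $L$ upgrades by Cauchy estimates to $C^1$-convergence, and a mean value estimate in local coordinates shows that $\tilde\alpha_{N_h}|_L$ is injective for $h$ large, contradicting the initial assumption and proving (1). The main obstacle is the identification $\tilde\alpha=\id_X$, which requires combining the rank theorem with the dimension count inside a connected manifold, together with the final passage from uniform convergence to univalence on compacta: the latter fails in several variables for an arbitrary injective limit (Hurwitz is strictly one-dimensional), but it works here precisely because the limit map is the identity, which makes the $C^1$-estimate effective.
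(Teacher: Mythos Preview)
Your proof is correct and follows essentially the same route as the paper: contradiction, renormalization by automorphisms $\gamma_N$ into a fixed compact $K$, construction of the CDL $(A,\tilde\alpha_N)$ for the renormalized system, identification $\tilde\alpha=\id_X$ from the automorphism-type hypothesis, and a contradiction with non-univalence of $\tilde\alpha_{N_h}$ on a fixed compact set; the reduction of (2) to (1) via $\ell\circ F^M=T^M\circ\ell$ is likewise the paper's argument. You actually supply more detail than the paper at two points the paper leaves implicit: the open--closed argument forcing $A=X$ (hence $\tilde\alpha=\id_X$), and the passage from uniform convergence $\tilde\alpha_{N_h}\to\id_X$ to eventual injectivity on a compact via Cauchy estimates and the mean value inequality --- your remark that Hurwitz fails in several variables but that convergence to the \emph{identity} rescues the argument is exactly the right observation.
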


\begin{proof}
The proof is similar to the one of Proposition \ref{iltipo}.
Fix $r>0$ and $p\in X$.
By contradiction, assume that there exists a strictly increasing sequence $(N_h)$ in $\N^k$ with $N_0=0$, converging to $\infty$ and such that for all $h\geq 1$ the mapping $\ell$ is  not  univalent on $B(F^{N_h}(p),r)$.
Let $K$ be a compact subset in $X$ such that $X={\rm aut}(X)\cdot K$. 
 For all $N\in \N$, let  $\gamma_N$ be an automorphism of $X$ such that $\gamma_N(F^{N}(p))\in K$. 
Define for all $M\geq N\in \N^k$ ,$$\tilde F_{N,M}\coloneqq \gamma_M\circ F^{M-N}\circ \gamma_N^{-1}.$$
Define for all $j\geq h\in \N$, $$  g_{h,j}\coloneqq \tilde F_{N_h,N_j}.$$

 The dynamical system $( g_{h,j}\colon X\to X)$ admits a relatively compact orbit. Let  $(A, \tilde \alpha_h)$ be the CDL  for  $(g_{h,j})$ given by \cite[Section 2]{A}. 
This extends,  as in Lemma \ref{directlimit}, to  a CDL   $(A, \tilde \alpha_N)$ for $(\tilde F_{N,M})$ satisfying $\tilde \alpha_h=\tilde \alpha_{N_h}$ for all $h\in \N$. Clearly, $(A, \alpha_N\coloneqq \tilde \alpha_N\circ \gamma_N^{-1})$ is a CDL for $(F^{M-N})$.

Since for all $h\geq 1$ the mapping $\ell$ is  not  univalent on $B(F^{N_h}(p),r)$, we have that 
for all $h\geq 1$ the mapping   $\alpha_{N_h}$ is  not  univalent on   $B(F^{N_h}(p),r)$, and thus
$\tilde \alpha_{N_h}$ is not univalent on $B(K,r)$. 
Since by assumption $F$ is of automorphism type, it follows    that $\tilde \alpha_{N_h}\to \id_X$
 uniformly on compact subsets as $h\to\infty$, which gives a contradiction.
This proves (1). Statement (2) follows immediately from $\ell\circ F^M=T^M\circ\ell.$
\end{proof}

\begin{corollary}\label{iltiposcende}
Let $X$ be a cocompact  Kobayashi hyperbolic complex manifold. Let $$E=(f_1,\dots f_k,g_1,\dots g_p)$$ be a $(k+p)$-uple of commuting holomorphic self-maps of $X$. Let $F\coloneqq(f_1, \dots,f_k)$.
 Then $${\rm type}(E)\leq {\rm type}(F).$$
\end{corollary}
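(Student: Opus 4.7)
The plan is to deduce the inequality by transferring a morphism of (quasi-)models into a pointwise rank comparison.

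First, I will invoke Theorem \ref{principaleforward} to pick CSMs $(Z_E,\ell_E,T_E)$ for $E$ and $(Z_F,\ell_F,T_F)$ for $F$, writing $T_E=(\tau_1^E,\ldots,\tau_k^E,\sigma_1^E,\ldots,\sigma_p^E)$ and $T_F=(\tau_1^F,\ldots,\tau_k^F)$. Setting $d_E\coloneqq {\rm type}(E)=\dim Z_E$ and $d_F\coloneqq {\rm type}(F)=\dim Z_F$, the goal becomes $d_E\leq d_F$.

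The key observation is that forgetting the last $p$ components of $T_E$ turns the CSM for $E$ into a quasi-model for $F$. Indeed, since $\ell_E\circ f_j=\tau_j^E\circ\ell_E$ for $1\leq j\leq k$ is part of the commutation relations defining the CSM for $E$, the triple $(Z_E,\ell_E,(\tau_1^E,\ldots,\tau_k^E))$ is a quasi-model for $F$ whose base space is Kobayashi hyperbolic. Applying the strengthened universal property of Remark \ref{defgiusta} to $(Z_F,\ell_F,T_F)$, I obtain a unique holomorphic map $\eta\colon Z_F\to Z_E$ satisfying $\eta\circ\ell_F=\ell_E$ and $\eta\circ\tau_j^F=\tau_j^E\circ\eta$ for every $1\leq j\leq k$.

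The inequality then follows from a pointwise rank comparison. From $\eta\circ\ell_F=\ell_E$ and the chain rule,
\[
d_x\ell_E=d_{\ell_F(x)}\eta\circ d_x\ell_F,\qquad x\in X,
\]
so ${\rm rk}_x\ell_E\leq{\rm rk}_x\ell_F\leq d_F$ at every $x\in X$. On the other hand, Proposition \ref{iltipo}(1) applied to $E$ supplies a point $p\in X$ (in fact an entire Kobayashi ball about a sufficiently large $E$-iterate of any chosen base point) where ${\rm rk}_p\ell_E=d_E$. Evaluating the rank inequality at this $p$ yields $d_E\leq d_F$, which is exactly ${\rm type}(E)\leq{\rm type}(F)$.

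The step that requires care is justifying the existence of $\eta$: the quasi-model $(Z_E,\ell_E,(\tau_1^E,\ldots,\tau_k^E))$ need not be a semi-model for $F$, since there is no reason for the negative $(\tau_j^E)$-iterates of $\ell_E(X)$ to cover $Z_E$ (the inverses of the $\sigma_i^E$ are needed to reach all of $Z_E$). This is precisely why the CSM universal property as originally stated is not enough here, and why Remark \ref{defgiusta}, which upgrades it to arbitrary quasi-models with Kobayashi hyperbolic base, is the essential ingredient. Beyond this, the proof is essentially linear-algebraic and should present no serious obstacle.
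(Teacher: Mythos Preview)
Your proof is correct and follows essentially the same route as the paper: both pass from the CSM for $E$ to a quasi-model for $F$ by forgetting the last $p$ automorphisms, invoke Remark \ref{defgiusta} to obtain the factorization $\ell_E=\eta\circ\ell_F$, and finish with a rank comparison at a point supplied by Proposition \ref{iltipo}. Your execution is in fact slightly more economical: the paper applies Proposition \ref{iltipo} twice and aligns the iterates to locate a single point where both $\ell_E$ and $\ell_F$ attain maximal rank, whereas you note that the trivial bound ${\rm rk}_x\ell_F\leq d_F$ already does the job once ${\rm rk}_x\ell_E=d_E$ is achieved.
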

\begin{proof}
 Let $G\coloneqq(g_1,\dots, g_p)$.
 Let $(Z,\ell,(T_F,T_G))$ be a CSM for $E$ and let $(\Lambda,h,\Phi)$ be a CSM for $F$. Since 
$(Z,\ell,T_F)$ is a quasi-model for $F$, by Remark \ref{defgiusta} there exists a morphism $\hat\eta\colon (\Lambda,h,\Phi)\to (Z,\ell,T_F)$. In particular we have $\ell=\eta\circ h.$ 
We claim that there exists a point $x\in X$ such that ${\rm rk}_x \ell={\rm type}(E)$ and ${\rm rk}_x h={\rm type}(F)$, and this clearly yields the result. 
Fix $z\in X$. By Proposition \ref{iltipo}, there exists $U_0\coloneqq (N_0,M_0)\in \N^{k+p}$ such that for all $U\geq U_0$, we have ${\rm rk}_{E^U(z)} \ell={\rm type}(E)$.
Define $$y\coloneqq G^{M_0}(z).$$
By Proposition \ref{iltipo}, there exists $N_1\in \N^k$ such that for all $N\geq N_1$, we have ${\rm rk}_{F^N(y)} h={\rm type}(F)$.
Let $N_2\coloneqq {\rm max} (N_0, N_1).$ Setting $$x\coloneqq  F^{N_2}(G^{M_0}(z)),$$
the claim is proved.
\end{proof}

The remaining of the section is devoted to a useful procedure to construct a CSM for a finite family of commuting holomorphic self-maps.

Let $X$ be a cocompact  Kobayashi hyperbolic complex manifold, and  let $F\coloneqq(f_1,\dots, f_k)$ be a $k$-uple of commuting holomorphic self-maps of $X$. 
Let $(\Lambda,h,\Phi)$ be a  CSM  for $F$. Let $g$ be a holomorphic self map of $X$ commuting with $F$. Since $$\v_j \circ (h \circ g)=(h \circ g)\circ  f_j\quad \mbox{for all}\ 1\leq j\leq k,$$ 
the triple $(\Lambda, h\circ g, \Phi)$ is a  quasi-model for $F$ with Kobayashi hyperbolic base space.
\begin{definition}
We denote by $\Gamma_{F}(g)$ the  holomorphic self-map of $\Lambda$ which  commutes with $\v_j$ for all $1\leq j\leq k$  and which satisfies $$\Gamma_{F}(g)\circ h=h\circ g,$$
 whose existence and uniqueness follows from    Remark \ref{defgiusta}.
\end{definition}

\begin{remark}
Notice that $\Gamma_F(\id_X)=\id_\Lambda$.
\end{remark}

\begin{lemma}\label{gammafunt}
If $g_1,g_2\colon X\to X$ are two holomorphic self-maps commuting with $F$, then $$\Gamma_F(g_1\circ g_2)=\Gamma_F(g_1)\circ \Gamma_F(g_2).$$
\end{lemma}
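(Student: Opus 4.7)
The plan is to prove the identity by showing that $\Gamma_F(g_1)\circ\Gamma_F(g_2)$ satisfies the two defining properties that uniquely characterize $\Gamma_F(g_1\circ g_2)$, namely: it commutes with each $\varphi_j$, and it intertwines with $h$ via $g_1\circ g_2$. Uniqueness then forces equality.

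First I would verify the intertwining relation. Using the defining equations $\Gamma_F(g_i)\circ h = h\circ g_i$ for $i=1,2$, I compute
\[
\bigl(\Gamma_F(g_1)\circ\Gamma_F(g_2)\bigr)\circ h
= \Gamma_F(g_1)\circ\bigl(\Gamma_F(g_2)\circ h\bigr)
= \Gamma_F(g_1)\circ (h\circ g_2)
= (h\circ g_1)\circ g_2
= h\circ(g_1\circ g_2).
\]
Next, for each $1\leq j\leq k$, since both $\Gamma_F(g_1)$ and $\Gamma_F(g_2)$ commute with $\varphi_j$ by definition,
\[
\Gamma_F(g_1)\circ\Gamma_F(g_2)\circ\varphi_j
= \Gamma_F(g_1)\circ\varphi_j\circ\Gamma_F(g_2)
= \varphi_j\circ\Gamma_F(g_1)\circ\Gamma_F(g_2).
\]

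Finally I would invoke uniqueness. Since $g_1\circ g_2$ commutes with each $f_j$ (as both $g_1$ and $g_2$ do), the triple $(\Lambda, h\circ(g_1\circ g_2), \Phi)$ is a quasi-model for $F$ with Kobayashi hyperbolic base space. By Remark \ref{defgiusta}, there is a unique morphism of quasi-models from the CSM $(\Lambda,h,\Phi)$ into it; equivalently, $\Gamma_F(g_1\circ g_2)$ is the unique holomorphic self-map of $\Lambda$ commuting with all $\varphi_j$ and satisfying the intertwining relation with $h$. The two displayed computations show that $\Gamma_F(g_1)\circ\Gamma_F(g_2)$ meets both requirements, so it must coincide with $\Gamma_F(g_1\circ g_2)$.

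There is no real obstacle here: the lemma is essentially a formal consequence of the universal property, and the only thing to be mildly careful about is ensuring that the uniqueness clause applies, which is guaranteed because $\Lambda$ is Kobayashi hyperbolic and the morphism lands in a quasi-model to which Remark \ref{defgiusta} (hence Proposition \ref{help}(1)) applies.
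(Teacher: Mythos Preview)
Your proof is correct and follows essentially the same route as the paper's: verify that $\Gamma_F(g_1)\circ\Gamma_F(g_2)$ satisfies the intertwining relation $(\cdot)\circ h = h\circ(g_1\circ g_2)$ and then invoke the uniqueness in the definition of $\Gamma_F(g_1\circ g_2)$. The paper's version is simply terser, leaving the (trivial) commutation with each $\varphi_j$ implicit.
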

\begin{proof}
We have that $$\Gamma_F(g_1)\circ \Gamma_F(g_2)\circ h=\Gamma_F(g_1)\circ h\circ g_2=h\circ g_1\circ g_2,$$ and the result follows from the uniqueness of $\Gamma_F(g_1\circ g_2)$.
\end{proof}

\begin{corollary}\label{automorfismo}
If $g$ is an automorphism of $X$, then $\Gamma_{F}(g)$ is an automorphism of $\Lambda$.
\end{corollary}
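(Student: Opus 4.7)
The plan is to exhibit a two-sided inverse for $\Gamma_F(g)$ using the functoriality provided by Lemma \ref{gammafunt}. Since $g$ is an automorphism of $X$, the inverse $g^{-1}$ is a well-defined holomorphic self-map of $X$, and the first step is to check that $g^{-1}$ also commutes with $F$, so that $\Gamma_F(g^{-1})$ is defined. This is immediate: conjugating the identity $g \circ f_j = f_j \circ g$ by $g^{-1}$ on both sides gives $f_j \circ g^{-1} = g^{-1} \circ f_j$ for each $1 \leq j \leq k$.

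Once both $\Gamma_F(g)$ and $\Gamma_F(g^{-1})$ are in hand as holomorphic self-maps of $\Lambda$ commuting with each $\v_j$, I would apply Lemma \ref{gammafunt} twice to get
\[
\Gamma_F(g) \circ \Gamma_F(g^{-1}) = \Gamma_F(g \circ g^{-1}) = \Gamma_F(\id_X), \qquad \Gamma_F(g^{-1}) \circ \Gamma_F(g) = \Gamma_F(g^{-1} \circ g) = \Gamma_F(\id_X),
\]
and then invoke the preceding remark $\Gamma_F(\id_X) = \id_\Lambda$ to conclude that both compositions equal $\id_\Lambda$. Hence $\Gamma_F(g)$ is bijective with holomorphic inverse $\Gamma_F(g^{-1})$, so it is an automorphism of $\Lambda$.

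There is no real obstacle here: the argument is a purely formal consequence of the functoriality of $\Gamma_F$, once we have observed that $g^{-1}$ lies in the class of maps to which $\Gamma_F$ applies. The only mild point to verify is the commutation of $g^{-1}$ with $F$, but this is automatic from the commutation of $g$ with $F$ as noted above.
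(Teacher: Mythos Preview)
Your proof is correct and follows exactly the same approach as the paper: exhibit $\Gamma_F(g^{-1})$ as a two-sided inverse using Lemma~\ref{gammafunt} and the remark $\Gamma_F(\id_X)=\id_\Lambda$. The paper's proof is simply a one-line version of yours (``Clearly $\Gamma_F(g^{-1})$ is a holomorphic inverse for $\Gamma_F(g)$''), leaving the verification that $g^{-1}$ commutes with $F$ and the application of the functoriality lemma implicit.
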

\begin{proof}
Clearly $\Gamma_F(g^{-1})$ is an holomorphic inverse for $\Gamma_{F}(g)$.
\end{proof}
\begin{corollary}
Let $g_1,g_2\colon X\to X$ be two holomorphic self-maps commuting with $F$. If $g_1$ commutes with $g_2$, then $\Gamma_F(g_1)$ commutes with $\Gamma_F(g_2).$
\end{corollary}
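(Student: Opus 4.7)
The plan is to deduce this corollary as an immediate consequence of the functoriality established in Lemma \ref{gammafunt}. First I would verify that $\Gamma_F$ is actually defined on the composite $g_1\circ g_2$: since $g_1$ and $g_2$ each commute with every $f_j$, we have $(g_1\circ g_2)\circ f_j = g_1\circ f_j\circ g_2 = f_j\circ(g_1\circ g_2)$, so $g_1\circ g_2$ commutes with $F$. Symmetrically for $g_2\circ g_1$.

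Next, the hypothesis $g_1\circ g_2 = g_2\circ g_1$ and two applications of Lemma \ref{gammafunt} give
\[
\Gamma_F(g_1)\circ\Gamma_F(g_2) \;=\; \Gamma_F(g_1\circ g_2) \;=\; \Gamma_F(g_2\circ g_1) \;=\; \Gamma_F(g_2)\circ\Gamma_F(g_1),
\]
which is exactly the desired commutation.

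There is essentially no obstacle here: the only conceptual content has already been packaged into Lemma \ref{gammafunt}, and the corollary reduces to the observation that a multiplicative assignment sends commuting pairs to commuting pairs. If one wanted to be pedantic, one could alternatively verify commutation directly from the defining identity $\Gamma_F(g_i)\circ h = h\circ g_i$ together with the universal property of Remark \ref{defgiusta}: both $\Gamma_F(g_1)\circ\Gamma_F(g_2)$ and $\Gamma_F(g_2)\circ\Gamma_F(g_1)$ are holomorphic self-maps of $\Lambda$ commuting with each $\v_j$ and intertwining $h$ with $g_1\circ g_2=g_2\circ g_1$, hence they coincide by the uniqueness clause. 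But the one-line argument via Lemma \ref{gammafunt} is cleaner and is what I would present.
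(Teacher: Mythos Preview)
Your proof is correct and follows exactly the paper's approach: the paper's proof is the single displayed line $\Gamma_{F}(g_1)\circ \Gamma_{F}(g_2)=\Gamma_{F}(g_1\circ g_2)=\Gamma_{F}(g_2\circ g_1)=\Gamma_{F}(g_2)\circ \Gamma_{F}(g_1)$ via Lemma \ref{gammafunt}. Your added verification that $g_1\circ g_2$ commutes with $F$ (so that $\Gamma_F(g_1\circ g_2)$ is defined) is a detail the paper leaves implicit, and your alternative uniqueness argument is a valid but unnecessary elaboration.
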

\begin{proof}
 By Lemma \ref{gammafunt}, $$\Gamma_{F}(g_1)\circ \Gamma_{F}(g_2)=\Gamma_{F}(g_1\circ g_2)=\Gamma_{F}(g_2\circ g_1)=\Gamma_{F}(g_2)\circ \Gamma_{F}(g_1).$$
\end{proof}

\begin{theorem}\label{compos}
 Let $X$ be a cocompact  Kobayashi hyperbolic complex manifold. Let $$E=(f_1,\dots f_k,g_1,\dots g_p)$$ be a $(k+p)$-uple of commuting holomorphic self-maps of $X$. Let $F\coloneqq(f_1, \dots,f_k)$  and let $G\coloneqq(g_1,\dots, g_p)$.
Let $(\Lambda, h,\Phi)$ be a  CSM for $F$ and assume that $\Lambda$ is cocompact. Let $(\Omega,r,\Theta)$ be a  CSM for $\Gamma_F(G)$. 
Denote $H\coloneqq \Gamma_F(G)$ and  $\Xi\coloneqq \Gamma_{H} (\Phi)$.
Then a  CSM for  $E$ is given by the triple $(\Omega, r \circ h, (\Xi, \Theta)).$ 
\end{theorem}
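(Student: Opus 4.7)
The plan is to verify the two conditions needed of a CSM for $E$, namely that $(\Omega, r\circ h, (\Xi,\Theta))$ is a semi-model for $E$ with Kobayashi hyperbolic base, and that it satisfies the universal property. The strategy handles the $F$-part via $(\Lambda,h,\Phi)$ and then the $G$-part via $(\Omega,r,\Theta)$, mirroring the decomposition $E=(F,G)$.

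First I check the semi-model structure. The commutation of $r\circ h$ with each $\theta_i$ follows by composing $\Gamma_F(g_i)\circ h = h\circ g_i$ with $r$ and using $r\circ \Gamma_F(g_i) = \theta_i\circ r$; the commutation with each $\xi_j$ combines $\xi_j\circ r = r\circ \varphi_j$ with $\varphi_j\circ h = h\circ f_j$. That $(\Xi,\Theta)$ is a commuting $(k+p)$-uple of automorphisms of $\Omega$ follows from Corollary \ref{automorfismo} (each $\xi_j$ is an automorphism because each $\varphi_j$ is) together with the fact that $\Gamma_H$ preserves commutativity, which handles both the commutation of the $\xi_j$ among themselves and the commutation of $\Xi$ with $\Theta$. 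The orbit condition $\bigcup_{(N,M)} (\Xi,\Theta)^{-(N,M)}(r(h(X))) = \Omega$ comes from combining $r\circ \Phi^{-N} = \Xi^{-N}\circ r$ with $\bigcup_N \Phi^{-N}(h(X)) = \Lambda$ and $\bigcup_M \Theta^{-M}(r(\Lambda)) = \Omega$. Kobayashi hyperbolicity of $\Omega$ is built into the assumption that $(\Omega,r,\Theta)$ is a CSM.

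Next, for the universal property, let $(\Delta, q, (\Psi,\Sigma))$ be an arbitrary semi-model for $E$ with $\Delta$ Kobayashi hyperbolic; I must produce a (necessarily unique, by Proposition \ref{help}) morphism of semi-models from $(\Omega, r\circ h, (\Xi,\Theta))$ into it. Regarded as a quasi-model for $F$, the triple $(\Delta,q,\Psi)$ receives, via Remark \ref{defgiusta} applied to the CSM $(\Lambda,h,\Phi)$, a unique holomorphic $\mu_1\colon \Lambda\to\Delta$ with $\mu_1\circ h = q$ and $\mu_1\circ \varphi_j = \psi_j\circ \mu_1$. The key intermediate step is to upgrade $(\Delta,\mu_1,\Sigma)$ to a quasi-model for $H=\Gamma_F(G)$, i.e.\ to show $\mu_1\circ \Gamma_F(g_i) = \sigma_i\circ \mu_1$ on all of $\Lambda$. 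On $h(X)$ this is immediate from $q\circ g_i = \sigma_i\circ q$; for a general $\lambda = \Phi^{-N}(h(x))$ I apply the automorphism $\Psi^N$ to both sides and use that $\Gamma_F(g_i)$ commutes with $\Phi$, that $\mu_1$ intertwines $\Phi$ with $\Psi$, and that $\Psi$ commutes with $\Sigma$, reducing to the case of $h(X)$ already handled.

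Applying Remark \ref{defgiusta} a second time to the CSM $(\Omega,r,\Theta)$ of $H$ and the quasi-model $(\Delta,\mu_1,\Sigma)$ yields a unique holomorphic $\mu\colon \Omega\to\Delta$ with $\mu\circ r = \mu_1$ and $\mu\circ \theta_i = \sigma_i\circ\mu$; in particular $\mu\circ (r\circ h) = \mu_1\circ h = q$. The last thing to verify is that $\mu$ also intertwines each $\xi_j$ with $\psi_j$. On $r(\Lambda)$ this is immediate from $\mu\circ r = \mu_1$, $\mu_1\circ \varphi_j = \psi_j\circ \mu_1$ and $\xi_j\circ r = r\circ \varphi_j$; it extends to $\Omega = \bigcup_M \Theta^{-M}(r(\Lambda))$ by applying $\Sigma^M$ and using that $\Xi$ commutes with $\Theta$ and that $\Psi$ commutes with $\Sigma$. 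The main obstacle is precisely these two ``extension from $h(X)$ or $r(\Lambda)$ to the whole base'' steps; each one is a formal consequence of the semi-model orbit condition together with the ambient commutativity relations, so no new analytic input is required.
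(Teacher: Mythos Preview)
Your proof is correct and follows essentially the same approach as the paper's: both arguments apply Remark \ref{defgiusta} first to the CSM $(\Lambda,h,\Phi)$ of $F$ to produce the intermediate map $\Lambda\to\Delta$, extend the intertwining with the $G$-part over $\Lambda=\bigcup_N\Phi^{-N}(h(X))$, then apply Remark \ref{defgiusta} again to the CSM $(\Omega,r,\Theta)$ of $H$ and extend the intertwining with the $F$-part over $\Omega=\bigcup_M\Theta^{-M}(r(\Lambda))$. The only difference is that you verify the semi-model structure of $(\Omega, r\circ h,(\Xi,\Theta))$ explicitly up front, whereas the paper leaves this implicit and proceeds directly to the universal property.
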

\begin{proof}
Denote $\Theta=(\vartheta_1,\dots,\vartheta_p), H=(\eta_1,\dots \eta _p),\Xi=(\xi_1,\dots,\xi_k), \Phi=(\v_1,\dots \v_k).$
First of all, notice that by Corollary \ref{automorfismo}, $\Xi$ is a $k$-uple of  automorphisms of $\Omega$.
Let $(\Delta,s,(A,B))$ be a  semi-model for  $E$ such that $\Delta$ is Kobayashi hyperbolic, where $A=(\alpha_1,\dots \alpha_k)$ and $B=(\beta_1,\dots, \beta_p).$
\SelectTips{eu}{12}
\[ \xymatrix{\Delta  \ar[rd]^A  \ar[rr]^B && \Delta\ar[rd]^A\\
& \Delta   \ar[rr]^(.35)B &&\Delta \\
X\ar[uu]\ar[dd]  \ar[rd]^F  \ar'[r]^(.60)G[rr] && \ar'[u][uu] X\ar'[d][dd]\ar[rd]^F\\
& X \ar[uu] \ar[dd]  \ar[rr]^(.35)G &&X \ar[uu]^(.60)s\ar[dd]_h\\
\Lambda\ar'[r]^(.60){H}[rr]\ar[dd]\ar[rd]^\Phi &&\Lambda\ar[dd]  \ar[rd]^\Phi\\
& \Lambda\ar[dd]\ar[rr]^(.35){H}&&\Lambda\ar[dd]_r \ar@/_1pc/[uuuu]_(.25)\omega\\
\Omega\ar'[r]^(.60)\Theta[rr]\ar[rd]^{\Xi} &&\Omega \ar[rd]^{\Xi}\\
& \Omega\ar[rr]^(.35)\Theta&&\Omega\ar@/_2pc/[uuuuuu]_(.25)\mu.}
\]
The triple $(\Delta,s,A)$ is a quasi-model for $F$. By the universal property stated in Remark \ref{defgiusta}, there exists a
morphism $\hat\omega\colon (\Lambda, h,\Phi)\to (\Delta,s,A)$.

We claim  that  $(\Delta, \omega, B)$ is a quasi-model for $H$, and it suffices to show that 
$$\omega\circ \eta_j=\beta_j\circ \omega,\quad\forall\, 1\leq j\leq p.$$
Fix $1\leq j\leq p$. Then for all $N\in \N^k$,
\begin{align*}
 \omega\circ \eta_j\circ \Phi^{-N}\circ h&=\omega\circ \Phi^{-N} \circ \eta_j\circ h=A^{-N}\circ \omega\circ h\circ g_j=A^{-N}\circ s\circ g_j=\\
 &=A^{-N}\circ \beta_j\circ s=\beta_j\circ A^{-N}\circ \omega\circ h=\beta_j\circ \omega\circ\Phi^{-N}\circ h.
\end{align*}
Hence $\omega\circ \eta_j=\beta_j\circ \omega$ on $\bigcup_{N\in \N^k}\Phi^{-N}\circ h=\Lambda$.

By the universal property stated in  Remark  \ref{defgiusta}, there exists a morphism $\hat\mu\colon (\Omega,r,\Theta)\to (\Delta, \omega, B)$.
We claim that the holomorphic map $\mu\colon \Omega\to \Delta$ also defines a morphism $\hat \mu\colon (\Omega, r \circ h, (\Xi, \Theta))\to (\Delta, s, (A,B)),$ and it suffices to show that 
$$\mu\circ\xi_j=\alpha_j\circ \mu,\quad\forall\, 1\leq j\leq k.$$
Fix $1\leq j\leq k$. For all  $N\in \N^p$,
$$\mu\circ \xi_j\circ \Theta^{-N}\circ r=\mu\circ \Theta^{-N}\circ \xi_j\circ r=\mu\circ \Theta^{-N}\circ r\circ \v_j=B^{-N}\circ \omega\circ \v_j=\alpha_j\circ B^{-N}\circ \omega=\alpha_j\circ \mu\circ \Theta^{-N}\circ r.$$
Hence $\mu\circ\xi_j=\alpha_j\circ \mu$ on $\bigcup_{N\in \N^p} \Theta^{-N}(r(\Lambda))=\Omega.$
\end{proof}

\begin{proposition}\label{divgamma}
Let $X$ be a cocompact  Kobayashi hyperbolic complex manifold.
 Let  $F\coloneqq(f_1,\dots, f_k)$ be a $k$-uple of commuting holomorphic self-maps of $X$. Then 
for all holomorphic self-map $g\colon X\to X$ commuting with $F$ we have $c(\Gamma_F(g))=c(g)$.
\end{proposition}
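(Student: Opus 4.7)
The plan is to prove the equality by sandwiching $c(\Gamma_F(g))$ between two quantities which both equal $c(g)$: namely, the divergence rate of an automorphism obtained as the $(k+1)$-st coordinate of a CSM of the enlarged commuting family $(f_1,\dots,f_k,g)$.

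First I would set up the two semi-models involved. Let $(\Lambda,h,\Phi)$ be a CSM for $F$ (so $\Gamma_F(g)$ is a holomorphic self-map of $\Lambda$ satisfying $\Gamma_F(g)\circ h=h\circ g$), and apply Theorem \ref{principaleforward} to the commuting $(k+1)$-uple $E\coloneqq (f_1,\dots,f_k,g)$ to obtain a CSM $(Z,\ell',(T'_F,\tau'))$, where $T'_F=(\tau'_1,\dots,\tau'_k)$ is the part corresponding to $F$ and $\tau'$ is the automorphism of $Z$ corresponding to $g$. Since $(Z,\ell',T'_F)$ is automatically a quasi-model for $F$ with Kobayashi hyperbolic base, Remark \ref{defgiusta} provides a unique morphism $\hat\omega\colon (\Lambda,h,\Phi)\to(Z,\ell',T'_F)$, i.e. a holomorphic map $\omega\colon\Lambda\to Z$ with $\omega\circ h=\ell'$ and $\omega\circ\varphi_j=\tau'_j\circ\omega$ for $1\leq j\leq k$.

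The key step is to verify that $\omega$ also intertwines $\Gamma_F(g)$ and $\tau'$, that is
\[
\omega\circ \Gamma_F(g)=\tau'\circ \omega \quad \text{on } \Lambda.
\]
On $h(X)$ this is immediate: $\omega\circ\Gamma_F(g)\circ h=\omega\circ h\circ g=\ell'\circ g=\tau'\circ \ell'=\tau'\circ\omega\circ h$. Since both $\omega\circ\Gamma_F(g)$ and $\tau'\circ\omega$ intertwine $\Phi^{-N}$ with $(T'_F)^{-N}$ for every $N\in\N^k$, equality propagates to $\Phi^{-N}(h(X))$ for each $N$, and hence to the whole of $\Lambda=\bigcup_{N\in\N^k}\Phi^{-N}(h(X))$ (this is the same argument used inside the proof of Theorem \ref{compos}, but no cocompactness of $\Lambda$ is needed here). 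I expect this identification to be the main conceptual point; once in hand, the rest is a short chain of distance-decreasing estimates.

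Finally I would combine three facts. Proposition \ref{disdiv} applied to the $(k+1)$-uple $E$ (with its $(k+1)$-st component $g$) gives $c(\tau')=c(g)$. The distance-decreasing property of $\omega$ together with $\omega\circ\Gamma_F(g)^m=(\tau')^m\circ\omega$ yields, for any $y\in\Lambda$,
\[
k_Z((\tau')^m(\omega(y)),\omega(y))=k_Z(\omega(\Gamma_F(g)^m(y)),\omega(y))\leq k_\Lambda(\Gamma_F(g)^m(y),y),
\]
which upon dividing by $m$ and passing to the limit gives $c(\tau')\leq c(\Gamma_F(g))$. Similarly, the distance-decreasing property of $h$ together with $\Gamma_F(g)^m\circ h=h\circ g^m$ yields
\[
k_\Lambda(\Gamma_F(g)^m(h(x)),h(x))=k_\Lambda(h(g^m(x)),h(x))\leq k_X(g^m(x),x),
\]
hence $c(\Gamma_F(g))\leq c(g)$. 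Assembling these, $c(g)=c(\tau')\leq c(\Gamma_F(g))\leq c(g)$, so all inequalities are equalities and the statement follows.
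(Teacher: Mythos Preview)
Your proof is correct and follows essentially the same route as the paper: form the CSM $(Z,\ell',(T'_F,\tau'))$ for the enlarged family $E=(f_1,\dots,f_k,g)$, use the universal property (Remark \ref{defgiusta}) to get $\omega\colon\Lambda\to Z$, verify the intertwining $\omega\circ\Gamma_F(g)=\tau'\circ\omega$ by the same density argument as in Theorem \ref{compos}, and conclude via the sandwich $c(g)\geq c(\Gamma_F(g))\geq c(\tau')=c(g)$ using distance-decreasing and Proposition \ref{disdiv}. The paper's version is terser (it just writes the chain of inequalities and cites Theorem \ref{compos} for the quasi-model assertion), but the content is the same.
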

\begin{proof}
Let $(\Lambda, h,\Phi)$ be a  CSM for $F$, and set $E\coloneqq (f_1,\dots, f_k,g)$. Let $(Z,\ell,(T_F,T_g))$ be a CSM for $E$. Let $$\hat t\colon (\Lambda, h,\Phi)\to(Z,\ell ,T_F)$$ be the morphism given by the universal property stated in Remark \ref{defgiusta}. Then, arguing as in the proof of Theorem \ref{compos}, we see that the triple $(Z,t,T_g)$ is a quasi-model for $\Gamma_F(g)\colon \Lambda\to\Lambda$. But then $$c(g)\geq c(\Gamma_F(g))\geq c(T_g)=c(g),$$
where the last equality follows from Proposition \ref{disdiv}.
\end{proof}

\section{Applications to the unit ball}\label{ball}

We begin this section recalling some definitions and results for the unit ball $\B^q\subset \C^q$.
\begin{definition}\label{jafar}
The Siegel upper half-space $\mathbb H^q$ is defined by $$\mathbb{H}^q=\left\{(z,w)\in \C\times \C^{q-1}, \Im(z)>\|w\|^2\right\}.$$ Recall that $\mathbb H^q$ is biholomorphic to the ball $\B^q$ via the {\sl Cayley transform} $\Psi\colon \B^q\to \H^q$ defined as $$\Psi(z,w)=\left(i\frac{1+z}{1-z},\frac{iw}{1-z}\right), \quad (z,w)\in \C\times \C^{q-1}.$$

Let $\langle\cdot, \cdot\rangle$ denote the standard Hermitian product in $\C^q$. In several complex variables, the natural generalization of the non-tangential limit at the boundary is the following.
 If $\zeta\in \partial\B^q$, then the set $$K(\zeta,R)\coloneqq\{z\in \B^q: |1-\langle z,\zeta\rangle|< R(1-\|z\|)\}$$ is a {\sl Kor\'anyi region} of {\sl vertex} $\zeta$ and {\sl amplitude} $R> 1$.
If $f\colon \B^q \to \C^m$ is a holomorphic map, then we say that $f$ has {\sl $K$-limit} $L\in \C^m$  at
$\zeta$ (we write $K\hbox{-}\lim_{z\to \zeta}f(z)=L$) if for
each sequence $(z_n)$ converging to $\zeta$ such that
$(z_n)$  belongs eventually to some Kor\'anyi region of vertex $\zeta$, we have
that $f(z_n)\to L$.

\end{definition}

\begin{definition}\label{magamago'}
A point $\zeta\in \partial \B^q$ such that $K\hbox{-}\lim_{z\to\zeta}f(z)=\zeta$ and
$$\liminf_{z\to\zeta}\frac{1-\|f(z)\|}{1-\|z\|}=\lambda<+\infty$$ is called a {\sl boundary regular fixed point} (BRFP for short), and $\lambda$ is called  its {\sl dilation}.

\end{definition}

The following  result by Herv\'e \cite{H} generalizes the classical Denjoy--Wolff theorem in the unit disc.
\begin{theorem}
Let $f\colon \B^q\to \B^q$ be holomorphic. Assume that $f$ admits no fixed points in $\B^q$.
Then there exists a  point $p\in \de \B^q$, called the {\sl Denjoy--Wolff point} of $f$, such that $(f^n)$ converges uniformly on compact subsets to the constant map $z\mapsto p$. The Denjoy--Wolff point of $f$ is a BRFP and 
its dilation $\lambda\in(0,1]$.
\end{theorem}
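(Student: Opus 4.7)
The plan is to follow the classical strategy originating with Wolff for the disc and adapted to $\B^q$ by Hervé. It splits into three pieces: first produce a candidate point $p \in \partial\B^q$ carrying a family of $f$-invariant horospheres (Wolff's lemma); then use the tautness of $\B^q$ to show that the iterates converge to the constant map $z \mapsto p$; finally read off the BRFP conclusion and the dilation bound from the horosphere invariance via a Julia-type estimate.

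To construct $p$, I would approximate $f$ by the strict contractions $f_n := r_n f$ with $r_n \nearrow 1$. Since $f_n(\B^q) \subset r_n \overline{\B^q} \subset\subset \B^q$, the Earle--Hamilton theorem (or Brouwer applied to the continuous extension to $r_n\overline{\B^q}$) yields a fixed point $z_n$ of $f_n$. If $(z_n)$ accumulated at some $z_\infty \in \B^q$, the limit would be a fixed point of $f$, contradicting the hypothesis; hence $\|z_n\| \to 1$ and, passing to a subsequence, $z_n \to p \in \partial\B^q$. The Schwarz--Pick inequality $k_{\B^q}(f_n(z), z_n) \leq k_{\B^q}(z, z_n)$, rewritten using
\[
\tanh^2 k_{\B^q}(z,w) = 1 - \frac{(1-\|z\|^2)(1-\|w\|^2)}{|1-\langle z,w\rangle|^2},
\]
and evaluated in the limit $n\to\infty$, yields Wolff's lemma
\[
\frac{|1-\langle f(z), p\rangle|^2}{1-\|f(z)\|^2} \leq \frac{|1-\langle z,p\rangle|^2}{1-\|z\|^2}, \qquad z \in \B^q,
\]
or equivalently $f(E(p,R)) \subseteq E(p,R)$ for every horosphere $E(p,R) := \{z : |1-\langle z,p\rangle|^2/(1-\|z\|^2) < R\}$.

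For the convergence, tautness of $\B^q$ implies that every subsequence of $(f^n)$ admits a subsubsequence converging uniformly on compacta to a holomorphic map $g: \B^q \to \overline{\B^q}$. If the image of $g$ met $\B^q$ at some point $g(z_0)$, then the orbit $(f^n(z_0))$ would be relatively compact in $\B^q$, and a classical result (see, e.g., Abate) would force $f$ to have a fixed point in $\B^q$, contradicting the hypothesis. Hence $g(\B^q) \subseteq \partial\B^q$, and by the open mapping theorem $g$ is constant, say $g \equiv \zeta \in \partial\B^q$. Iterating the horosphere invariance, each orbit $f^n(z_0)$ lies in $E(p, R_{z_0})$ with $R_{z_0} := |1-\langle z_0,p\rangle|^2/(1-\|z_0\|^2)$, so $\zeta \in \overline{E(p, R_{z_0})}$; since $g$ is constant this holds for every $z_0$, and taking $z_0 = sp$ with $s \nearrow 1$ gives $R_{z_0} \to 0$, while $\bigcap_{R>0}\overline{E(p,R)} \cap \overline{\B^q} = \{p\}$, whence $\zeta = p$. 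Uniqueness of the subsequential limit upgrades to $f^n \to p$ uniformly on compacta.

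For the final assertion, specialize the horosphere inequality at $z = tp$ with $t \nearrow 1$: a direct computation gives $(1-\|f(tp)\|)/(1+\|f(tp)\|) \leq (1-t)/(1+t)$, whence $\liminf_{t\to 1^-}(1-\|f(tp)\|)/(1-t) \leq 1$, and the Schwarz--Pick lower bound anchored at $f(0)$ makes this liminf strictly positive, so $\lambda \in (0,1]$. The $K$-limit at $p$ is handled analogously: for $z$ in a Korányi region $K(p, R_0)$ with $z \to p$, the quantity $|1-\langle z,p\rangle|^2/(1-\|z\|^2)$ tends to $0$, so by horosphere invariance the corresponding ratio for $f(z)$ does too; combined with $|1-\langle f(z),p\rangle| \geq 1-\|f(z)\|$ this forces $f(z) \to p$. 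The main obstacle I anticipate is the rigidity step excluding an interior-valued subsequential limit $g$: a direct proof is possible via the Schwarz--Pick equality case, but the cleanest route invokes the non-trivial fact that a holomorphic self-map of $\B^q$ with a relatively compact forward orbit possesses a fixed point.
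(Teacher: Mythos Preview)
The paper does not supply its own proof of this statement; it is quoted as a classical result of Herv\'e with a bare citation to \cite{H}. Your proposal reconstructs what is essentially the standard Herv\'e--Wolff argument (approximation by strict contractions to locate the boundary point, horosphere invariance from Schwarz--Pick, tautness plus the horosphere nesting to force convergence of iterates, and the Julia-type estimate for the dilation and $K$-limit), and it is correct. The one step you flag as delicate---that a holomorphic self-map of $\B^q$ with a relatively compact forward orbit must have an interior fixed point---is indeed the substantive ingredient, but it is standard and available in the reference \cite{aba1} already used by the paper.
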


\begin{remark}\label{peterpan}
Let $f\colon\B^q\to\B^q$ be a holomorphic self-map  without fixed points, and let $\lambda$ be the dilation at its Denjoy--Wolff  point. Then by \cite[Proposition 5.8]{AB} the divergence rate of $f$ satisfies $$c(f)=-\log \lambda.$$ 
\end{remark}

\begin{definition}
A holomorphic self-map $f\colon \B^q\to \B^q$ is called
\begin{enumerate}
\item {\sl elliptic} if it admits a fixed point $z\in \B^q$,
\item {\sl parabolic} if it admits no fixed points $z\in \B^q$, and its dilation at the Denjoy--Wolff point is  equal to $1$,
\item {\sl hyperbolic } if it admits no fixed points $z\in \B^q$, and its dilation at the Denjoy--Wolff point  is strictly less than $1$.  
\end{enumerate}
\end{definition}

For a proof of the following result, see, {\sl e.g.}, \cite[Proposition 2.9]{BCMpluri}
\begin{proposition}\label{dilimplica}
Let $f\colon \B^q\to \B^q$ be holomorphic. Assume that $f$ admits a BRFP $p\in\partial\B^q$ with dilation $0<\lambda<1$.
Then $f$ is hyperbolic with Denjoy--Wolff  point $p$.
\end{proposition}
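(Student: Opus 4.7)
The plan is to invoke Julia's lemma (often called the Julia--Wolff--Carath\'eodory lemma) for the ball, which asserts that if $p\in\partial\B^q$ is a BRFP of $f$ with dilation $\lambda$, then $f$ maps horospheres at $p$ into smaller ones:
\[
f\bigl(E(p,R)\bigr)\subseteq E(p,\lambda R),\qquad R>0,
\]
where $E(p,R)\coloneqq\{z\in\B^q:|1-\langle z,p\rangle|^2<R(1-\|z\|^2)\}$. First I would simply cite this standard fact and then exploit the contraction $\lambda<1$.

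Iterating the horosphere invariance gives $f^n(E(p,R))\subseteq E(p,\lambda^n R)$. Since $\lambda<1$, the horospheres collapse onto $p$: the Euclidean closures satisfy $\bigcap_{n\in\N}\overline{E(p,\lambda^n R)}=\{p\}$ in $\overline{\B^q}$, as is immediate from the defining inequality of $E(p,R)$. From this I would extract two conclusions. First, $f$ admits no interior fixed point: a fixed point $z_0\in\B^q$ would lie in some $E(p,R_0)$ and then, by iteration, in $\bigcap_n E(p,\lambda^n R_0)\subseteq\B^q\cap\{p\}=\emptyset$. Second, for an arbitrary $z\in\B^q$, choosing $R_0$ with $z\in E(p,R_0)$, the orbit $(f^n(z))$ is trapped in $\overline{E(p,\lambda^n R_0)}$ for every $n$, so every accumulation point of $(f^n(z))$ in $\overline{\B^q}$ must equal $p$, and therefore $f^n(z)\to p$.

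Since $f$ is fixed-point-free in $\B^q$, Herv\'e's theorem supplies a Denjoy--Wolff point, and the orbit convergence just established forces this point to be $p$. Its dilation is $\lambda<1$ by hypothesis, so $f$ is hyperbolic by definition.

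I do not expect a genuine obstacle: the entire argument is a short reduction to the ball version of Julia's lemma. The only point requiring a brief verification is that nested closed horospheres really intersect only at $p$ in $\overline{\B^q}$, which follows from a direct inspection of the inequality $|1-\langle z,p\rangle|^2<R(1-\|z\|^2)$ as $R\to 0^+$.
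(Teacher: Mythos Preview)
Your argument is correct and is the standard route via the ball version of Julia's lemma: the horosphere contraction $f(E(p,R))\subseteq E(p,\lambda R)$ with $\lambda<1$ forces both the absence of interior fixed points and the convergence of every orbit to $p$, whence $p$ is the Denjoy--Wolff point with dilation $\lambda<1$. The paper itself does not supply a proof of this proposition but simply refers to \cite[Proposition~2.9]{BCMpluri}; your sketch is essentially the classical argument one finds in the literature (e.g., in Abate's monograph \cite{aba1}).
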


In the unit ball $\B^q$ every holomorphic retract  is an  affine slice, that is,  the intersection of $\B^q$ with an affine complex space (see {\sl e.g.}, \cite[Corollary 2.2.16]{aba1}). In particular, every holomorphic retract is (affine) biholomorphic to a (possibly lower dimensional) ball. Also, recall that a {\sl complex geodesic} for $\B^q$ is a holomorphic map $\v:\D \to \B^q$ such that for all $\zeta, \xi\in \D$ it holds $k_\D(\zeta,\xi)=k_{\B^q}(\v(\zeta), \v(\xi))$. The image of each complex geodesic for $\B^q$ is a one-dimensional affine slice of $\B^q$, and, conversely, every one-dimensional affine slice of $\B^q$ is the image of a  complex geodesic for $\B^q$ (see {\sl e.g.} \cite[Corollary 2.6.9]{aba1}). 

By Theorem \ref{principaleforward}, Proposition \ref{disdiv} and Remark \ref{peterpan} we have the following.

\begin{theorem}\label{forwardpalla}
Le $F=(f_1,\ldots, f_k)$ be a family of commuting holomorphic self-maps of $\B^q$. Then there exists a  CSM $(\B^d, \ell, T\coloneqq(\tau_1,\dots,\tau_k))$ for $F$, where $0\leq d\leq q$. Moreover, if for some $1\leq j\leq k$ the map $f_j$ is hyperbolic with dilation $\lambda_j$ at its Denjoy--Wolff point, then $d\geq 1$ and $\tau_j$ is hyperbolic with dilation $\lambda_j$ at its Denjoy--Wolff point.
\end{theorem}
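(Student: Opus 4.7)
The plan is to assemble the theorem directly from the general results established earlier in the section, specialized to $X=\B^q$. First, observe that the ball is cocompact Kobayashi hyperbolic (its automorphism group acts transitively), so Theorem \ref{principaleforward} applies and yields a CSM $(Z,\ell,T)$ for $F$ where $Z$ is a holomorphic retract of $\B^q$. I would then invoke the classical fact recalled just before the statement: every holomorphic retract of $\B^q$ is an affine slice, hence (affinely) biholomorphic to some ball $\B^d$ with $0\leq d\leq q$. Transporting $\ell$ and $T$ through this biholomorphism produces a CSM of the form $(\B^d,\ell,T)$, which gives the first assertion.

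For the dilation statement, suppose that $f_j$ is hyperbolic with dilation $\lambda_j\in(0,1)$ at its Denjoy--Wolff point. By Remark \ref{peterpan}, $c(f_j)=-\log\lambda_j>0$, and Proposition \ref{disdiv} then gives $c(\tau_j)=c(f_j)=-\log\lambda_j>0$. The inequality $c(\tau_j)>0$ forces $d\geq 1$: if $d=0$ then $\B^d$ is a single point, $\tau_j=\id$, and $c(\tau_j)=0$, a contradiction (this is exactly what Corollary \ref{dimensionemaggioreuno} packages, so that corollary can be cited directly).

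It remains to identify $\tau_j$ as hyperbolic with the correct dilation. Since $\tau_j\in\Aut(\B^d)$ and $c(\tau_j)>0$, $\tau_j$ cannot be elliptic (elliptic automorphisms fix a point and thus have zero divergence rate) and cannot be parabolic (by Remark \ref{peterpan}, parabolic self-maps of $\B^d$ have dilation $1$ and hence divergence rate $0$). Hence $\tau_j$ is hyperbolic with some dilation $\mu_j\in(0,1)$ at its Denjoy--Wolff point. Applying Remark \ref{peterpan} to $\tau_j$ yields $c(\tau_j)=-\log\mu_j$, and comparing with $c(\tau_j)=-\log\lambda_j$ we get $\mu_j=\lambda_j$, completing the proof.

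There is essentially no obstacle here: the work has been done in the previous results, and the step which most deserves comment is simply the observation that in $\B^q$ the abstract retract produced by Theorem \ref{principaleforward} is forced to be an affine ball, so that the general CSM $(Z,\ell,T)$ becomes the concrete object $(\B^d,\ell,T)$ claimed in the statement.
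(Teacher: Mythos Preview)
Your proof is correct and follows exactly the approach the paper takes: the paper simply states that the theorem follows from Theorem~\ref{principaleforward}, Proposition~\ref{disdiv} and Remark~\ref{peterpan}, and you have faithfully unpacked how these pieces combine (together with the retract-of-the-ball fact recalled just before the statement and Corollary~\ref{dimensionemaggioreuno}).
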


The following theorem is proved in \cite[Thm. 3.3]{Br}.
\begin{theorem}\label{fil}
Let $f,g\colon \B^q\to \B^q$ be commuting holomorphic self-maps without inner fixed points.
Let $p_f,p_g\in \partial\B^q$ be the respective Denjoy--Wolff points. Assume that $p_f\neq p_g$.
Let $\Delta$ be the complex geodesic whose closure contains $p_f$ and $p_g$.
Then $\Delta$ is invariant for $f$ and $g$ and $f|_\Delta,g|_\Delta$ are commuting hyperbolic automorphisms of $\Delta$.
 \end{theorem}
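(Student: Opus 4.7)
The plan is to reduce to the one-dimensional case and then invoke the Behan--Shields theorem recalled in the introduction. I would carry this out in three stages: (i) use the commutation to show that each of $p_f,p_g$ is a boundary regular fixed point of both $f$ and $g$; (ii) show that the complex geodesic $\Delta$ is invariant under $f$ and $g$; (iii) apply the one-dimensional theory to the restrictions.

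For (i), since $g$ is nonelliptic, $g^n\to p_g$ uniformly on compacta, and by standard admissibility of iterates in $\B^q$ the convergence takes place inside a fixed Kor\'anyi region at $p_g$. The identity $f\circ g^n=g^n\circ f$ gives $f(g^n(x))=g^n(f(x))\to p_g$ admissibly as well. Abate's several-variable Julia lemma then yields that the Julia number of $f$ at $p_g$ is finite, so $p_g$ is a BRFP of $f$ with some dilation $\alpha_f\in(0,+\infty)$; since $p_g\ne p_f$ and the Denjoy--Wolff point is characterised as the unique BRFP with dilation $\le 1$, we must have $\alpha_f\ge 1$. By symmetry, $p_f$ is a BRFP of $g$ with dilation $\alpha_g\ge 1$.

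For (ii), which I expect to be the geometrically delicate step and the main obstacle, I would argue as follows. By Wolff's lemma in the ball, $f$ maps each horosphere at $p_f$ into itself; and because $p_g$ is a BRFP with dilation $\alpha_f$, $f$ also maps each horosphere at $p_g$ into a horosphere at $p_g$, rescaled by $\alpha_f$. The complex geodesic $\Delta$ with endpoints $p_f$ and $p_g$ admits an intrinsic description as the locus of tangency points of a suitable one-parameter family of mutually internally tangent horospheres at $p_f$ and $p_g$; this configuration is preserved by $f$ in view of the two horosphere invariances, yielding $f(\Delta)\subset\Delta$. The same argument applies to $g$.

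For (iii), once $\Delta$ is $f$- and $g$-invariant, fix an isometric parametrisation $\Delta\cong\D$; the restrictions $f|_\Delta,g|_\Delta$ are then commuting nonelliptic holomorphic self-maps of $\D$ whose iterates converge respectively to $p_f$ and $p_g$ on $\partial\Delta$, hence have distinct Denjoy--Wolff points. The Behan--Shields theorem recalled in the introduction then forces both restrictions to be hyperbolic automorphisms of $\D$, concluding the proof.
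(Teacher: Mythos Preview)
The paper does not prove this result; it is quoted from \cite[Thm.~3.3]{Br} without argument. Your three-stage plan does match the structure of Bracci's original proof, and step (iii) is correct as written.

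There are two concrete gaps. In step (i), the claim that $(g^n(x))$ converges to $p_g$ within a fixed Kor\'anyi region is only known for hyperbolic $g$ (the present paper invokes exactly this, for hyperbolic maps, in the proof of Theorem~\ref{fin1}); it fails in general for parabolic $g$, which at this stage you have not yet excluded---indeed, ruling out parabolicity is part of what the theorem proves. The cure is to drop Kor\'anyi regions and use Schwarz--Pick instead: $k_{\B^q}\bigl(g^n(x),f(g^n(x))\bigr)=k_{\B^q}\bigl(g^n(x),g^n(f(x))\bigr)\le k_{\B^q}(x,f(x))$, and a sequence tending to $p_g$ whose $f$-images remain at bounded Kobayashi distance is precisely the input the several-variable Julia lemma needs to make $p_g$ a BRFP of~$f$. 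In step (ii), your tangency-locus description of $\Delta$ is correct, but the preservation claim does not follow: $f$ maps each horosphere only \emph{into} the rescaled one, so from a tangency point $z$ of $\overline{E(p_f,R)}$ and $\overline{E(p_g,S)}$ you obtain only $f(z)\in\overline{E(p_f,\lambda_f R)}\cap\overline{E(p_g,\alpha_f S)}$. These two horospheres are tangent precisely when $\lambda_f\alpha_f=1$; if $\lambda_f\alpha_f>1$ they overlap in an open region not contained in $\Delta$, and nothing pins $f(z)$ to the geodesic. The equality $\lambda_f\alpha_f=1$ is itself a consequence of the theorem (cf.\ Remark~\ref{reciprocaldilations}), so as written the argument is circular. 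Establishing the invariance of $\Delta$ without presupposing this is the real content of \cite{Br}, and you should expect it to require substantially more than the horosphere-tangency picture.
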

\begin{remark}\label{reciprocaldilations}
It  follows from  Theorem \ref{fil}, that the map $f$ is hyperbolic with a BRFP in $p_g$ and that  the dilation of $f$ at $p_g$ is reciprocal to its dilation at $p_f$.  A symmetric statement holds for $g$.
\end{remark}

The next results generalize Theorem \ref{fil} to a family of commuting holomorphic maps.

\begin{proposition}\label{commonDW}
Let $A$ be a set of indices. Let $\{f_i\}_{i\in A}$ be a family of commuting holomorphic self-maps of $\B^q$ without inner fixed points. For all $i\in A$ let  $p_i\in \partial \B^q$ be the Denjoy--Wolff point of $f_i$. 
\begin{enumerate}
\item If there exists $i\in A$ such that $f_i$ is parabolic, then there exists $p\in \de\B^q$ such that $p_i=p$ for all $i\in A$.
\item If for all $i\in A$ the map $f_i$ is hyperbolic then
\begin{itemize}
\item[i)] either there exists $p\in \de\B^q$ such that $p_i=p$ for all $i\in A$,
\item[ii)] or there exist $p,p'\in \de \B^q$, $p\neq p'$ such that  $p_i\in \{p, p'\}$ for all $i\in A$,
 and there exist  $j,k\in A$ such that $p_{j}=p$ and $p_{k}= p'$. Moreover, for all $i\in A$,  the complex geodesic $\Delta$ whose closure contains $p$ and $ p'$ is $f_i$-invariant, and $f_i|_\Delta: \Delta\to \Delta$ is a hyperbolic automorphism of $\Delta$ with $p$ and $ p'$ BRFP. In particular, both $p$ and $p'$ are BRFP's for $f_i$ for all $i\in A$.
\end{itemize}
\end{enumerate}
\end{proposition}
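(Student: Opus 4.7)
The plan is to reduce everything to the two-map case via Theorem \ref{fil} and to carefully bookkeep dilations at boundary fixed points using Remark \ref{reciprocaldilations} and Proposition \ref{dilimplica}.

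For part (1), I would argue by contradiction: if $f_{j_0}$ is parabolic with Denjoy--Wolff point $p$ and some $f_i$ had $p_i\neq p$, then Theorem \ref{fil} would force $f_{j_0}|_\Delta$ to be a hyperbolic automorphism of the complex geodesic $\Delta$ through $p$ and $p_i$. Because the Kobayashi distance on $\Delta$ coincides with the restriction of the Kobayashi distance on $\B^q$, the divergence rate is preserved under restriction; by Remark \ref{peterpan} this would give $c(f_{j_0}|_\Delta)=c(f_{j_0})=-\log 1=0$, contradicting the fact that a hyperbolic disc automorphism has strictly positive divergence rate.

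For part (2), suppose the Denjoy--Wolff points do not all coincide. I would fix $j,k\in A$ with $p:=p_j\neq p_k=:p'$, let $\Delta$ denote the complex geodesic whose closure contains $p$ and $p'$, and reduce everything to the key claim that $p_i\in\{p,p'\}$ for every $i\in A$. Once this claim holds, the ``Moreover'' invariance statement follows immediately from Theorem \ref{fil}: if $p_i=p$ one applies it to the commuting pair $(f_i,f_k)$, and if $p_i=p'$ one applies it to $(f_i,f_j)$, obtaining $\Delta$-invariance and the hyperbolic automorphism restriction in both cases. To prove the claim I argue by contradiction and split into two subcases according to whether $p_i$ lies in $\overline\Delta$ or not. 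If $p_i\in\overline\Delta\setminus\{p,p'\}$, then the complex geodesics through $(p,p_i)$ and through $(p',p_i)$ provided by Theorem \ref{fil} applied respectively to $(f_i,f_j)$ and $(f_i,f_k)$ both coincide with $\Delta$ by uniqueness of a complex geodesic with prescribed boundary endpoints; consequently $f_i|_\Delta$ would be a hyperbolic M\"obius transformation with two distinct fixed point pairs $\{p_i,p\}$ and $\{p_i,p'\}$, which is impossible.

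The main obstacle is the subcase $p_i\notin\overline\Delta$, where I plan a composition trick. Set $g:=f_i\circ f_k^c$ for $c\in\N$ to be chosen. By Remark \ref{reciprocaldilations}, $p'$ is a BRFP of $f_i$ with dilation $1/\lambda_i$, hence $p'$ is a BRFP of $g$ with dilation $\lambda_k^c/\lambda_i$, which is strictly less than $1$ for $c$ sufficiently large. Proposition \ref{dilimplica} then makes $g$ hyperbolic with Denjoy--Wolff point $p'$. Since $g$ commutes with $f_j$ whose Denjoy--Wolff point $p$ differs from $p'$, Theorem \ref{fil} ensures that $\Delta$ is $g$-invariant. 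The critical step is to promote this to $f_i$-invariance of $\Delta$: using that $f_k^c|_\Delta$ is a hyperbolic automorphism of $\Delta$ and hence a bijection of $\Delta$ onto itself, every $w\in\Delta$ can be written as $f_k^c(z)$ for some $z\in\Delta$, whence $f_i(w)=g(z)\in\Delta$. Thus $f_i$ preserves $\Delta$, and the orbit $f_i^n(z_0)$ for any $z_0\in\Delta$ must stay in $\Delta$ while converging to $p_i$, forcing $p_i\in\overline\Delta$ and contradicting the subcase hypothesis. This surjectivity-based descent is the delicate point, since it bypasses the non-injectivity of $f_i$ on $\B^q$.
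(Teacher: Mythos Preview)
Your proof is correct, and for Part~(2) it follows a genuinely different route from the paper. For Part~(1), the paper simply invokes Remark~\ref{reciprocaldilations} directly: that remark already asserts that $f_{j_0}$ would be \emph{hyperbolic}, an immediate contradiction. Your divergence-rate computation is a valid but slightly longer way of reaching the same contradiction. For Part~(2), the paper argues purely by dilation bookkeeping: assuming a third Denjoy--Wolff point $p_m\notin\{p_j,p_k\}$, it forms $g=f_j\circ f_k$ (or $f_j^2\circ f_k$ when $\lambda_j=\lambda_k$), observes that $p_j$ becomes the Denjoy--Wolff point of $g$, and then computes the dilation of $g$ at $p_m$ in two independent ways---once by the chain rule (since $p_m$ is a BRFP for both $f_j$ and $f_k$, giving $1/(\lambda_j\lambda_k)$) and once via Remark~\ref{reciprocaldilations} applied to the commuting pair $(g,f_m)$ (giving $\lambda_k/\lambda_j$)---and the mismatch yields the contradiction. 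Your argument is instead geometric: you build $g=f_i\circ f_k^c$ so that $\Delta$ is $g$-invariant, and then exploit the surjectivity of the automorphism $f_k^c|_\Delta$ to ``peel off'' $f_k^c$ and conclude $f_i(\Delta)\subset\Delta$, forcing $p_i\in\overline\Delta$. The paper's method is shorter and entirely numerical; your surjectivity descent is a nice trick that avoids the case split on the $\lambda_j$'s and makes the invariance of $\Delta$ appear organically before the contradiction, though the ``Moreover'' clause is ultimately recovered the same way in both proofs, via Theorem~\ref{fil}.
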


\begin{proof}
If $f_{i}$ is parabolic and $p_k\neq p_i$ for some $k\in A$ we immediately reach a contradiction by Remark \ref{reciprocaldilations}.

Suppose now that for all $i\in A$ the map $f_i$ is hyperbolic. Suppose $ p_j\neq  p_k$ for some $j\neq k$. If for all $m\in A$ we have that $ p_m\in \{ p_j, p_k\}$, then the result follows applying Theorem \ref{fil} to the couple of commuting holomorphic self-maps $f_j, f_m$ if $ p_j\neq  p_m$ or to the couple $f_k, f_m$ if $ p_k\neq  p_m$.

Thus, in order to conclude the proof, we need to show that if $ p_j\neq  p_k$ for some $j\neq k$ then for all $m\in A$ it holds
$ p_m\in \{ p_j, p_k\}$. Assume by contradiction that this is not the case, and let $m\in A$ be such that $ p_m\not \in  \{ p_j, p_k\}$. For all $t\in A$, let $\lambda_t\in (0,1)$ be the  dilation  of $f_t$ at $ p_t$. 

By Theorem \ref{fil}, both $ p_k$ and $ p_m$ are BRFP's for $f_j$ with dilation $1/\lambda_j$. Similarly, $ p_j$ and $ p_m$ are BRFP's for $f_k$ with dilation $1/\lambda_k$ and $ p_j$ and $ p_k$ are BRFP's for $f_m$ with dilation $1/\lambda_m$. 

We first assume that $\lambda_j<\lambda_k$.  Let $g:=f_j\circ f_k$. Since $ p_j$ is the Denjoy--Wolff point of $f_j$ and a BRFP for $f_k$, it follows by the chain rule for dilations (see {\sl e.g.} \cite[Lemma 2.6]{abatebracci})  that $ p_j$ is a BRFP for $g$  with dilation given by the product of the dilations of $f_k$ and of $f_j$ at $ p_j$. Therefore, the dilation of $g$ at $ p_j$ is $\mu:=\frac{\lambda_j}{\lambda_k}<1$. By Theorem \ref{dilimplica}, $ p_j$ is the Denjoy--Wolff point of $g$. Now, $g$ commutes with $f_m$, and by Theorem \ref{fil}, it has  a BRFP at $ p_m$ with dilation $1/\mu$. However, since $ p_m$ is a BRFP for both $f_j$ and $f_k$, the dilation of $g=f_j\circ f_k$ at $ p_m$ is given by $\frac{1}{\lambda_j\lambda_k}$. This implies
\[
\frac{\lambda_k}{\lambda_j}=\frac{1}{\lambda_j\lambda_k},
\]
which gives a contradiction since $\lambda_k<1$. 

A similar argument works in case $\lambda_k<\lambda_j$. If $\lambda_j=\lambda_k$, one can argue as before replacing $g$ with $g=f_j^{ 2}\circ f_k$.
\end{proof}

\begin{theorem}\label{fin1}

Let  $F\coloneqq(f_1,\dots, f_k)$ be a $k$-uple of commuting  hyperbolic holomorphic self-maps of $\B^q$.
For all $1\leq j\leq k$ let   $0<\lambda_j<1$ be the dilation of $f_j$ at its Denjoy--Wolff point.
Then there exists a CSM $(\B^d,h,\Theta\coloneqq  (\tau_1,\dots,\tau_k))$ for $F$, where $1\leq d\leq  q$, satisfying the following properties. For all $1\leq j\leq k$ the automorphism $\tau_j$ is  hyperbolic   with fixed points $-e_1, e_1$ and the dilation at its Denjoy--Wolff point is $\lambda_j$. Moreover, one of the following two cases occurs.
 \begin{enumerate}
\item If the $f_j$'s have the same Denjoy--Wolff point $p\in \partial \B^q$,  then $\hbox{K-}\lim_{z\to p}h(z)=e_1$ and $e_1$ is the common Denjoy--Wolff point of the $\tau_j$'s.
\item If the set of Denjoy--Wolff points of the $f_j$'s is formed by two points $p,p'\in\partial \B^q$, then  $\hbox{K-}\lim_{z\to p}h(z)=e_1$, and $\hbox{K-}\lim_{z\to p'}h(z)=-e_1$. The point $e_1$ (respectively $-e_1$) is the Denjoy--Wolff point of $\tau_j$ if and only if $p$ (respect. $p'$) is the Denjoy--Wolff point of $f_j$. 
Let $\Delta$ be the complex geodesic whose closure contains $p$ and $p'$. Then $h|_\Delta: \Delta \to \C e_1\cap \B^d$ is an affine bijective transformation.
 \end{enumerate}
\end{theorem}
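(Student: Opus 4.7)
The plan is to begin with the CSM produced by Theorem~\ref{forwardpalla}, normalize it by an automorphism of $\B^d$ so that the common boundary fixed points of the $\tau_j$'s are $\pm e_1$, and then reduce the affine-bijection statement in Case~(2) to a one-variable boundary analysis on the invariant geodesic $\Delta$.

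Theorem~\ref{forwardpalla} produces a CSM $(\B^d,\tilde h,\tilde\Theta)$ with $d\ge 1$ in which each $\tilde\tau_j$ is a hyperbolic automorphism of $\B^d$ with dilation $\lambda_j$ at its Denjoy--Wolff point. A short algebraic observation is that any two commuting hyperbolic automorphisms of $\B^d$ share \emph{both} their boundary fixed points: if $\tau\tau'=\tau'\tau$ and $\tau$ has boundary fixed points $\{a,b\}$, then $\tau(\tau'(a))=\tau'(\tau(a))=\tau'(a)$ shows that $\tau'$ permutes $\{a,b\}$, and a swap is impossible since then $(\tau')^2$ would fix both $a,b$ while sharing its boundary fixed-point pair with $\tau'$, forcing $\tau'$ to fix $a,b$ after all. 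Hence the $\tilde\tau_j$'s share a common pair $\{q,q'\}\subset\partial\B^d$ of boundary fixed points. Choosing $\phi\in\Aut(\B^d)$ with $\phi(q)=e_1$, $\phi(q')=-e_1$ and setting $h\coloneqq\phi\circ\tilde h$, $\tau_j\coloneqq\phi\circ\tilde\tau_j\circ\phi^{-1}$ gives a CSM $(\B^d,h,\Theta)$ for $F$ in which every $\tau_j$ has boundary fixed points $\pm e_1$ and dilation $\lambda_j$ at its DW point. The DW-point correspondence is then pinned down by iterating the intertwining: $h(f_j^n(z))=\tau_j^n(h(z))$, combined with the classical fact that the orbit of a hyperbolic self-map of $\B^q$ converges non-tangentially to its DW point, determines the DW point of $\tau_j$ as a function of $p_j$, so that $\phi$ can be chosen coherently to enforce $p\leftrightarrow e_1$ and $p'\leftrightarrow -e_1$. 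The $K$-limit statements follow by a Lindel\"of-principle argument applied to $h$ along complex lines through the boundary points $p$ and $p'$, starting from the orbit convergence above.

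For the affine-bijection statement in Case~(2), Proposition~\ref{commonDW} gives that $\Delta$ is $F$-invariant, with each $f_j|_\Delta$ a hyperbolic automorphism of $\Delta\cong\D$ of dilation $\lambda_j$ at its DW point. The restriction $h|_\Delta\colon\Delta\to\B^d$ intertwines $F|_\Delta$ with $\Theta$ (two isometric actions) and is not constant, as otherwise its value would be a common interior fixed point of the hyperbolic $\tau_j$'s, which have none. Applying Julia's lemma to $h|_\Delta$ at $p$ and $p'$ gives finite boundary dilations; combined with the fact that the intertwining pairs isometries with matching dilation on both sides, the equality case of the Julia--Carath\'eodory theorem in the ball identifies $h|_\Delta$ with a complex geodesic of $\B^d$. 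Its $K$-boundary values $\pm e_1$ force the image to be $\C e_1\cap\B^d$, and as every complex geodesic of $\B^d$ is parametrized by an affine map, $h|_\Delta$ is an affine bijection onto $\C e_1\cap\B^d$. The main technical obstacle is this final rigidity step: isolating the precise form of the Julia--Carath\'eodory equality case in several variables that applies here, and verifying that the boundary data coming from the isometric intertwining indeed force equality.
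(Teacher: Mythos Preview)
Your overall strategy matches the paper's: start from the CSM of Theorem~\ref{forwardpalla}, normalize so that the common boundary fixed points of the $\tau_j$ lie at $\pm e_1$, pin down the Denjoy--Wolff correspondence via $h\circ f_j^n=\tau_j^n\circ h$ together with the Kor\'anyi-region convergence of hyperbolic orbits, and then treat Cases~(1) and~(2) separately. Two minor points of comparison: the paper cites \cite[Proposition~1.8]{GdF} for the common fixed-point pair of the $\tau_j$, whereas you argue it directly (your permutation argument is fine once one notes that automorphisms of $\B^d$ extend continuously to the closed ball and have exactly two boundary fixed points); and the paper makes your ``Lindel\"of-principle'' step precise by invoking \cite[Lemma~5.2 and Theorem~5.2]{AB} together with \cite[Sect.~3.5]{BP}, so you should point to a concrete boundary-limit theorem rather than leave it at the level of a principle.

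The one place your argument is genuinely incomplete is the rigidity step in Case~(2), which you flag yourself. The paper does not attempt any Julia--Carath\'eodory equality-case analysis: after normalizing $p=e_1$, $p'=-e_1$ (so $\Delta=\D\times\{0\}\subset\B^q$) it appeals directly to the proofs of \cite[Propositions~1.3 and~2.7]{GdF}, which analyze holomorphic maps $\D\to\B^d$ intertwining hyperbolic automorphisms and give that $h_1\colon\D\to\D$ is a hyperbolic automorphism fixing $\pm1$ while $h_i\equiv 0$ for $i\ge 2$; the $K$-limits and the identification of the image then follow at once. If you want to avoid quoting \cite{GdF}, a shorter route than Julia rigidity is already built into the CSM machinery: by~\eqref{kobdyn}, for $x,y\in\Delta$ one has $k_{\B^d}(h(x),h(y))=\lim_{M\to\infty}k_{\B^q}(F^M(x),F^M(y))$, and since $\Delta$ is a complex geodesic invariant under $F$ with each $f_j|_\Delta$ an automorphism of $\Delta$, the right-hand side is constantly $k_\Delta(x,y)$. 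Thus $h|_\Delta$ is a complex geodesic of $\B^d$, and its boundary values at $p,p'$ force the image to be $\C e_1\cap\B^d$. One caution about your final sentence: a complex geodesic of $\B^d$ has affine \emph{image} but need not itself be an affine map---it is a M\"obius reparametrization of an affine disc---so ``every complex geodesic is parametrized by an affine map'' is not quite right; the paper's proof likewise only produces $h_1$ as an automorphism of $\D$ fixing $\pm 1$.
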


\begin{proof}
Let $(\B^d, h, \Theta=(\tau_1,\dots \tau_k))$ be the CSM given by Theorem \ref{forwardpalla}.
By \cite[Proposition 1.8]{GdF}, the mappings $(\tau_1,\dots \tau_k)$ have their two boundary repelling fixed points in common.  Up to conjugacy with an automorphism of $\B^d$  we can assume them to be $-e_1, e_1$. 

Now, by Proposition \ref{commonDW}, there are two cases: either the $f_j$'s have a common Denjoy--Wolff point $p\in \partial \B^q$ or the set of Denjoy--Wolff points if formed by two points $p,p'\in \partial \B^q$.

In the first case,  up to conjugating (if necessary) the family $\Theta$ with the  automorphism $-{\sf id}_{\B^d}$, we can assume that $e_1$ is the Denjoy--Wolff point of $\tau_1$. Now, let $x\in \B^q$. Note that the orbit  $(f_1^{ n}(x))$ converges to $p$ and
\begin{equation}\label{limitDW}
\lim_{n\to \infty}h(f_1^{ n}(x))=\lim_{n\to \infty}\tau_1^{ n}(h(x))= e_1.
\end{equation}
Since $f_1$ is hyperbolic, by \cite[Sect. 3.5]{BP} the orbit $(f_1^{ n}(x))$ is eventually contained in a Kor\'anyi region with vertex in its Denjoy--Wolff point and by \cite[Lemma 5.2]{AB} it satisfies therefore the hypotheses of  \cite[Theorem 5.2]{AB}, hence
$$K\hbox{-}\lim_{z\to p}h(z)=e_1.$$
Moreover, by the same token, for all $2\leq j\leq k$ the orbit $(f_j^{ n}(x))$  lies eventually in a Kor\'anyi region with vertex $p$, therefore
$h(f_j^{ n}(x))=\tau_j^{ n}(h(x))\to e_1$, proving that $e_1$ is the Denjoy--Wolff point of $\tau_j$ for all $1\leq j\leq k$.

In the second case, we conjugate the family $F$ with an automorphism of $\B^q$ in such a way that $p=e_1$ and $p'=-e_1$. Then $\Delta=\D\times \{0\}\subset \C\times \C^{q-1}$. 
Let $1\leq m\leq k$ such that the Denjoy--Wolff point of $f_m$ is $e_1$.  Then up to conjugating (if necessary) the family $\Theta$ with the  automorphism $-{\sf id}_{\B^d}$, we can assume that the Denjoy--Wolff point of $\tau_m$ is $e_1$.
Denote $$h(\zeta,0,\dots ,0)\coloneqq (h_1(\zeta), \dots , h_d(\zeta)), \quad \zeta\in \D.$$
By  the proofs of \cite[Proposition 1.3 and 2.7]{GdF} it follows that $h_1\colon \D\to \D$ is a hyperbolic automorphism with fixed points $\{1,-1\}$, while $h_i(\zeta)\equiv 0$ for all $2\leq i\leq d$.
This implies that  $K\hbox{-}\lim_{z\to e_1}h(z)=e_1$ and  $K\hbox{-}\lim_{z\to -e_1}h(z)=-e_1$. Arguing as before it is easy to see
 that for all $1\leq j\leq k$ such that $f_j$ has Denjoy--Wolff point $e_1$ (resp. $-e_1$), the automorphism $\tau_j$ has Denjoy--Wolff point $e_1$ (resp $-e_1$).
\end{proof}

\begin{corollary} Let  $F\coloneqq(f_1,\dots, f_k)$ be a $k$-uple of commuting  hyperbolic holomorphic self-maps of $\B^q$. 
For all $1\leq j\leq k$ let   $0<\lambda_j<1$ be the dilation of $f_j$ at its Denjoy--Wolff point.
Then there exists a CSM $(\H^d,\ell,\Phi\coloneqq (\varphi_1,\dots,\varphi_k))$ for $F$ with $1\leq d\leq  q$,   such that one of the following two cases occurs.
\begin{enumerate}
\item If the  $f_j$'s share the same Denjoy--Wolff  point $p\in \partial \B^q$, then  $K\hbox{-}\lim_{z\to p}\ell(z)=\infty$, and  each $\varphi_j$ has the form 
\begin{equation}\label{prima}
(z,w)\mapsto \left(\frac{1}{\lambda_j} z,\frac{e^{it^j_1}}{\sqrt \lambda_j}w_1,\dots, \frac{e^{it^j_{d-1}}}{\sqrt \lambda_j}w_{d-1} \right),
\end{equation}
where $t^j_a\in \R$ for $1\leq a\leq d-1$.
\item If the set of Denjoy--Wolff points of the $f_j$'s  consists of two points $p, p'\in \partial \B^q$, then  
\begin{itemize}
\item[i)] if $p$ is the Denjoy--Wolff point of $f_j$, then $K\hbox{-}\lim_{z\to p}\ell(z)=\infty$ and
 $\varphi_j$ has  the form (\ref{prima})
\item[ii)] if $p'$ is the Denjoy--Wolff point of $f_j$, then $K\hbox{-}\lim_{z\to p'}\ell(z)=0$ and   $\varphi_j$ has  the form
\begin{equation}\label{seconda}
(z,w)\mapsto \left(\lambda_j z,e^{it^j_1}\sqrt \lambda_jw_1,\dots, e^{it^j_{d-1}}\sqrt \lambda_jw_{d-1} \right),
\end{equation}
where $t^j_a\in \R$ for $1\leq a\leq d-1$. Let $\Delta$ be the complex geodesic whose closure contains $p$ and $p'$. Then $\ell|_\Delta: \Delta \to\H$ is a M\"obius transformation.
\end{itemize}
\end{enumerate}
\end{corollary}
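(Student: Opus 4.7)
The plan is to transport the canonical Kobayashi hyperbolic semi-model $(\B^d, h, \Theta = (\tau_1, \ldots, \tau_k))$ furnished by Theorem \ref{fin1} through the Cayley transform $\Psi\colon \B^d \to \H^d$. Setting $\ell_0 := \Psi \circ h$ and $\varphi_{0,j} := \Psi \circ \tau_j \circ \Psi^{-1}$, the triple $(\H^d, \ell_0, \Phi_0)$ is again a CSM for $F$, being obtained from a CSM by a biholomorphism of base spaces (cf.\ Remark \ref{tango}). Since $\Psi$ sends $e_1$ to $\infty$ and $-e_1$ to $0$, the $K$-limit statements in Theorem \ref{fin1} translate immediately into the claimed $K$-limits for the final map $\ell$ produced below.

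Next, I would identify the shape of each $\varphi_{0,j}$. By Theorem \ref{fin1} each $\tau_j$ is hyperbolic with fixed point set $\{-e_1, e_1\}$, so each $\varphi_{0,j}$ is an automorphism of $\H^d$ fixing both $0$ and $\infty$ individually. The pointwise stabilizer of $\{0, \infty\}$ in $\Aut(\H^d)$ consists of the maps $(z, w) \mapsto (\mu z, Bw)$ with $\mu > 0$, where preservation of $\Im(z) > \|w\|^2$ forces $\|Bw\|^2 = \mu \|w\|^2$ for every $w$, hence $B = \sqrt{\mu}\, U$ with $U \in U(d-1)$. Thus $\varphi_{0,j}(z, w) = (\mu_j z, \sqrt{\mu_j}\, U_j w)$ for some $\mu_j > 0$ and unitary $U_j$. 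Restricting to the invariant first-coordinate geodesic and computing directly the Cayley conjugate of a hyperbolic disc automorphism fixing $\pm 1$ with dilation $\lambda_j$ at its Denjoy--Wolff point yields $\mu_j = 1/\lambda_j$ when the Denjoy--Wolff point of $\tau_j$ is $e_1$, and $\mu_j = \lambda_j$ when it is $-e_1$.

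Then I would diagonalize. Expanding $\varphi_{0,j} \circ \varphi_{0,k} = \varphi_{0,k} \circ \varphi_{0,j}$ coordinate-wise, the first components agree automatically while the second forces $U_j U_k = U_k U_j$; hence $\{U_1, \ldots, U_k\}$ is a commuting family of unitaries (crucially, also across the two subfamilies in case~(2)). Pick $V \in U(d-1)$ simultaneously diagonalizing all the $U_j$'s; then $\Sigma\colon (z, w) \mapsto (z, V^{-1}w)$ itself lies in the pointwise stabilizer of $\{0, \infty\}$ inside $\Aut(\H^d)$. Conjugating $(\H^d, \ell_0, \Phi_0)$ by $\Sigma$ produces the target CSM $(\H^d, \ell := \Sigma \circ \ell_0, \Phi)$ in which every $\varphi_j = \Sigma \circ \varphi_{0,j} \circ \Sigma^{-1}$ is diagonal, and therefore takes exactly the form \eqref{prima} or \eqref{seconda} depending on whether $\tau_j$ has Denjoy--Wolff point $e_1$ or $-e_1$; the $K$-limits survive because $\Sigma$ fixes both $0$ and $\infty$.

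Finally, for the geodesic claim in case~(2), Theorem \ref{fin1}(2) identifies $h|_\Delta$ with an affine bijection $\Delta \to \C e_1 \cap \B^d$, while $\Psi$ restricts on $\C e_1 \cap \B^d$ to the classical one-variable Cayley transform $\D \to \H$, and $\Sigma$ preserves the first coordinate; composing, $\ell|_\Delta\colon \Delta \to \H$ is a M\"obius transformation. The main obstacle I foresee is the simultaneous diagonalization step combined with the requirement that the diagonalizing automorphism $\Sigma$ remain in the pointwise stabilizer of $\{0, \infty\}$, so as not to disturb the $K$-limits just established; both points are dispatched by the short coordinate computation above, and the remaining content is a direct transfer of Theorem \ref{fin1} through $\Psi$.
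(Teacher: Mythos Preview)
Your proposal is correct and follows essentially the same route as the paper's proof: transport the CSM of Theorem~\ref{fin1} to $\H^d$ via the Cayley transform, identify each conjugated automorphism as a Heisenberg dilation $(z,w)\mapsto(\mu_j z,\sqrt{\mu_j}\,U_j w)$ fixing $0$ and $\infty$ (the paper cites \cite[Proposition 2.2.11]{aba1} for this, you derive it directly), then simultaneously diagonalize the commuting unitaries $U_j$ by a further conjugation $(z,w)\mapsto(z,Qw)$. Your write-up is in fact slightly more complete than the paper's, which dismisses case~(2) with ``the second case is similar'' and does not spell out that the diagonalizing conjugation preserves the $K$-limits or that $\ell|_\Delta$ remains a M\"obius transformation.
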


\begin{proof}
Let $(\B^d, h, \Theta)$ be the CSM for $F$ given by Theorem \ref{fin1}.
 Let $C:\B^d \to \H^d$ be  the Cayley transform from $\B^d$ to $\H^d$ which maps $e_1$ to $\infty$ and $-e_1$ to $0$. Setting  $\tilde{\ell}:=C \circ h$ and $\tilde{\varphi}_j:= C\circ \tau_j\circ  C^{-1}$, it follows that $(\H^d, \tilde{\ell}, (\tilde{\varphi}_1,\dots ,\tilde{\varphi}_k))$ is a CSM for $F$.

In the first case, note that for each $j$, the point $\infty\in \partial \H^d$ is  the Denjoy--Wolff point of $\tilde{\varphi}_j$, and the point $0$ is a BRFP. By \cite[Proposition 2.2.11]{aba1}, for all $1\leq j\leq k$ there exists a unitary $(d-1)\times (d-1)$-matrix $U_j$ such that $\tilde{\varphi}_j$  has the form
$$(z,w)\mapsto\left (\frac{1}{\lambda_j}z,\frac{1}{\sqrt \lambda_j}U_jw\right).$$
Since the unitary matrices $U_1,\dots ,U_k$ commute, there exists a unitary matrix $Q$ such that for all $1\leq j\leq k$ we have
$$Q U_j Q^{-1} = D_j,$$ where $D_j$ is a diagonal unitary matrix $$D_j\coloneqq {\sf diag}(e^{it^j_1},\dots, e^{it^j_{d-1}} ),$$ with $t^j_a\in \R$ for all $1\leq a\leq d-1.$
Define an automorphism of $\H^d$ by setting $\Psi(z,w)=(z,Qw).$ 
For all $1\leq j\leq k$ set $\v_j\coloneqq  \Psi\circ \tilde{\varphi}_j\circ \Psi^{-1}$, and set $\Phi\coloneqq (\v_1,\dots \v_k)$,  $\ell \coloneqq \Psi\circ \tilde{\ell}$.    Then $(\H^d, \ell, \Phi)$ is a CSM for $F$ satisfying the required properties. 
The second case is similar.

\end{proof}

In the remaining of the section we study obstructions to commutation in the unit ball.
Cowen in \cite[Corollary 4.1]{Co2} proved that if $f,g:\D \to \D$ are commuting holomorphic maps different from the identity, then $f$ and $g$ are either both elliptic, or  both hyperbolic, or both parabolic. In higher dimensions the situation is far more complicated. Examples of elliptic maps different from the identity commuting with hyperbolic or parabolic maps can be easily constructed in $\B^q$ for $q\geq 2$. On the other hand we can say something about such elliptic self-maps.
\begin{remark} Let $q\geq2 $.
Let $f\colon \B^q\to \B^q$ be an elliptic self-map and assume that it commutes with a  parabolic or hyperbolic self-map $g\colon \B^q\to \B^q$. Then ${\sf Fix}(f)$ is an affine slice of $\B^q$ of nonzero dimension, and the Denjoy--Wolff point of $g$  is in $ \overline{\hbox{\sf Fix}(g)}$. 
\end{remark}

The next natural question is if a hyperbolic mapping can commute with  a parabolic mapping in $\B^q$. If $q=1$ this is not possible by Cowen's Theorem.  Notice that in dimension one every hyperbolic map is of automorphism type. The following result generalizes Cowen's theorem to several variables assuming that the hyperbolic map is of automorphism type.
\begin{theorem}\label{teo-aut}
Let $f: \B^q\to \B^q$ be a  hyperbolic holomorphic self-map of automorphism type. If $g: \B^q\to \B^q$ is a holomorphic self-map commuting with $f$, then $g$ is not  parabolic.
\end{theorem}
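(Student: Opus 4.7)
The plan is to argue by contradiction, combining the canonical Kobayashi hyperbolic semi-model of $f$ with the eventual univalence from Proposition \ref{univ}. Assume $g$ is parabolic. Set $F\coloneqq (f)$. Since $f$ is hyperbolic of automorphism type, Theorem \ref{forwardpalla} produces a CSM of $f$ of the form $(\B^q, h, \tau)$, where $\tau$ is a hyperbolic automorphism of $\B^q$. Put $\psi\coloneqq \Gamma_F(g)\colon \B^q\to \B^q$, which commutes with $\tau$ and satisfies $h\circ g=\psi\circ h$. Proposition \ref{divgamma} together with Remark \ref{peterpan} gives $c(\psi)=c(g)=0$.

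The next step is to show $\psi$ must be elliptic. Otherwise, $\psi$ is a non-elliptic holomorphic self-map of $\B^q$ commuting with the hyperbolic automorphism $\tau$, and the theorem of de Fabritiis--Gentili from \cite{GdF} (recalled in the introduction) forces $\psi$ to be hyperbolic. But by Remark \ref{peterpan} every hyperbolic self-map of $\B^q$ has strictly positive divergence rate, contradicting $c(\psi)=0$. Hence $\psi$ has a fixed point $w_0\in \B^q$.

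The final step is to convert this interior fixed point of $\psi$ into an interior fixed point of $g$. By the semi-model identity \eqref{due} applied to $(\B^q,h,\tau)$, there exist $N\geq 0$ and $p_0\in \B^q$ such that $\tau^N(w_0)=h(p_0)$. As $\tau$ commutes with $\psi$, the point $\tau^N(w_0)$ is itself fixed by $\psi$, so $h(g(p_0))=\psi(h(p_0))=h(p_0)$. We may assume $g(p_0)\neq p_0$ (otherwise we are already done) and pick $r>k_{\B^q}(g(p_0),p_0)$. By Proposition \ref{univ}, there exists $M_0$ such that $h$ is univalent on $B(f^M(p_0),r)$ for every $M\geq M_0$. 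Since $f^M$ is distance-decreasing, $g(f^M(p_0))=f^M(g(p_0))\in B(f^M(p_0),r)$, while the intertwining $h\circ f^M=\tau^M\circ h$ yields
\[
h(g(f^M(p_0)))=\tau^M(h(g(p_0)))=\tau^M(h(p_0))=h(f^M(p_0)).
\]
Univalence of $h$ on $B(f^M(p_0),r)$ then forces $g(f^M(p_0))=f^M(p_0)\in \B^q$, contradicting the parabolicity of $g$.

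The main obstacle is the second step, where I rule out non-ellipticity of $\psi$; this is handled by invoking the de Fabritiis--Gentili theorem. The remaining arguments are a short combination of the semi-model property with the eventual univalence guaranteed by the automorphism-type hypothesis.
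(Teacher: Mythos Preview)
Your proof is correct and follows essentially the same strategy as the paper: pass to the CSM of $f$, analyze $\Gamma_f(g)$ via results from \cite{GdF}, and then use the eventual univalence of Proposition~\ref{univ} to transfer an interior fixed point of $\Gamma_f(g)$ back to $g$. The only tactical difference is that the paper works in $\H^q$ with the explicit form of the hyperbolic automorphism and uses \cite[Propositions~1.3 and~1.7]{GdF} to see that $\Gamma_f(g)$ acts on the axis $\{w=0\}$ as $z\mapsto az$, splitting into the cases $a\neq 1$ (hyperbolic) and $a=1$ (an entire fixed slice), whereas you combine the main theorem of \cite{GdF} with the divergence-rate identity of Proposition~\ref{divgamma} to rule out the non-elliptic case and then work with a single fixed point propagated along the $\tau$-orbit by commutation.
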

\begin{proof}

By \cite[Theorem 4.6]{A}, $f$ admits a CSM 
$(\H^q,h,\v)$, where  $\v\colon \H^q\to \H^q$ is  a hyperbolic automorphism of the form
\begin{equation}\label{formauno}
\v(z,w)= \left(\frac{1}{\lambda} z,\frac{e^{it_1}}{\sqrt \lambda}w_1,\dots, \frac{e^{it_{q-1}}}{\sqrt \lambda}w_{q-1} \right).
\end{equation}
The map $g$ induces a holomorphic self-map $\Gamma_{f}(g)\colon \H^q\to\H^q$ commuting with $\v$ and satisfying 
\begin{equation}\label{pluto}
h\circ g=\Gamma_{f}(g)\circ h.
\end{equation} By \cite[Propositions 1.3 and 1.7]{GdF}, the mapping  $\Gamma_{f}(g)$ leaves the one-dimensional slice  $\Delta\coloneqq \{w=0\}\subset \H^q$ invariant and  there exists $a>0$ such that 
$$\Gamma_{f}(g)(z,0)=(az,0), \quad \forall z\in \H.$$  
If $a\neq 1$, then $\Gamma_{f}(g)$ is  hyperbolic, and hence  
$g$ is also  hyperbolic.
Assume now that $a=1$, that is, $\Gamma_{f}(g)$ fixes every point of the slice $\Delta$. 
Since $\H^q=\bigcup_{n\geq 0}\v^{-n}h(\B^q)$, it follows that $h^{-1}(\Delta)\neq \varnothing$. Notice that $h^{-1}(\Delta)$ is $f$-invariant.
Let $x\in h^{-1}(\Delta)$. 
Let $r>k_{\B^q}(x,g(x))$. By Proposition \ref{univ}, there exists $n_0=n_0(x,r)$ such that for all $n\geq n_0$ the mapping $h$ is univalent on the Kobayashi ball $B(f^n(x),r)$.
We have that 
$$k_{\B^q}(g(f^n(x)),f^n(x))=k_{\B^q}(f^n(g(x)),f^n(x))\leq k_{\B^q}(g(x),x)<r,$$
thus $g(f^n(x))\in B(f^n(x),r)$.
Since $f^n(x)\in h^{-1}(\Delta)$, it follows by (\ref{pluto}) that $h(g(f^n(x)))=h(f^n(x))$, which implies that $g(f^n(x))=f^n(x)$.
Hence $g$ is elliptic.
\end{proof}

Hence if the type of the hyperbolic mapping is maximal, then it cannot commute with a parabolic mapping.
Other obstructions to commutation can be found if the type of the parabolic mapping is too small, as the following results show.

\begin{proposition}\label{para-0}
Let  $f: \B^q\to \B^q$ be a  hyperbolic holomorphic self-map and let $g\colon \B^q\to \B^q$ be a parabolic holomorphic self-map of type $0$. Then $f$ and $g$ do not commute.
\end{proposition}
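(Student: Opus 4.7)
The plan is to argue by contradiction. Assuming $f$ and $g$ commute, I would form the $2$-uple $F \coloneqq (g, f)$ of commuting holomorphic self-maps of $\B^q$ and derive incompatible lower and upper bounds on ${\rm type}(F)$.

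For the lower bound, since $f$ is hyperbolic, Remark \ref{peterpan} gives $c(f) > 0$, and Proposition \ref{disdiv} transfers this strict positivity to the second automorphism $\tau_f$ of any CSM $(\B^d, \ell, (\tau_g, \tau_f))$ for $F$. Since a zero-dimensional CSM can only support the trivial automorphism (which has divergence rate $0$), the dimension $d$ must satisfy $d \geq 1$. This is exactly the content of Corollary \ref{dimensionemaggioreuno}, applied to the family $F$ which contains the hyperbolic map $f$: I obtain ${\rm type}(F) \geq 1$.

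For the upper bound, I would invoke Corollary \ref{iltiposcende}, the monotonicity statement that the type of a commuting family weakly decreases when the family is enlarged. Applied to $E = (g, f)$ with the singleton subfamily $(g)$, it yields
\[
{\rm type}(F) \;=\; {\rm type}(E) \;\leq\; {\rm type}(g) \;=\; 0,
\]
where the last equality uses the standing hypothesis that $g$ is parabolic of type $0$.

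The two inequalities ${\rm type}(F) \geq 1$ and ${\rm type}(F) \leq 0$ are incompatible, producing the desired contradiction and completing the proof. In fact there is no real obstacle at this stage: once the CSM formalism for commuting families, the monotonicity of the type under enlargement of the family (Corollary \ref{iltiposcende}), and the lower bound coming from a hyperbolic factor (Corollary \ref{dimensionemaggioreuno}) are in place, the conclusion is an immediate two-line consequence. The entire conceptual difficulty has been absorbed into the construction of the canonical Kobayashi hyperbolic semi-model for a commuting $k$-uple and the verification of its functorial properties earlier in the paper.
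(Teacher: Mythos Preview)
Your proof is correct and is essentially identical to the paper's own argument: both proceed by contradiction, invoke Corollary \ref{dimensionemaggioreuno} (via the hyperbolicity of $f$) to get ${\rm type}(f,g)\geq 1$, and invoke Corollary \ref{iltiposcende} with the subfamily $(g)$ to get ${\rm type}(f,g)\leq {\rm type}(g)=0$. Your ordering of the pair as $(g,f)$ so that Corollary \ref{iltiposcende} applies verbatim is a nice touch, and the extra unpacking of Corollary \ref{dimensionemaggioreuno} through Remark \ref{peterpan} and Proposition \ref{disdiv} is accurate but not strictly needed.
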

\begin{proof}
Assume by contradiction that $f$ and $g$ commute.
Let $(Z,\ell,T)$ be a  CSM for the pair $(f,g)$. Since $f$ is hyperbolic we have by Corollary \ref{dimensionemaggioreuno} that ${\rm dim}\, Z\geq1$. Since $g$ has type $0$, it follows from Corollary \ref{iltiposcende} that ${\rm dim}\, Z=0$, contradiction. 
\end{proof}

\begin{proposition}\label{para-1}
Let  $f: \B^q\to \B^q$ be a  hyperbolic holomorphic self-map and let $g\colon \B^q\to \B^q$ be a parabolic holomorphic self-map of type $1$. If $f$ and $g$ commute, then $g$ admits a CSM of the form $(\D,h,\id)$.
\end{proposition}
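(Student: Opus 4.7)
The strategy is to work directly with a CSM of $g$ alone and use the commutation with $f$, via the $\Gamma_g$ construction, to rigidify the automorphism part of the semi-model. Since $g$ has type $1$, any CSM $(Y,h,\varphi)$ of $g$ has $\dim Y=1$, and by Theorem \ref{principaleforward} $Y$ is a holomorphic retract of $\B^q$; using the characterization of retracts of the ball recalled before Theorem \ref{forwardpalla} (every holomorphic retract is an affine slice, hence biholomorphic to a lower dimensional ball), we may identify $Y$ with $\D$. Thus we have a CSM $(\D,h,\varphi)$ for $g$ with $\varphi\in\Aut(\D)$. Since $g$ is parabolic, its dilation at the Denjoy--Wolff point equals $1$, so Remark \ref{peterpan} gives $c(g)=0$, and Proposition \ref{disdiv} then yields $c(\varphi)=0$. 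In particular $\varphi$ is a non-hyperbolic automorphism of $\D$.

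Next I would exploit the commutation with $f$. Since $f$ commutes with $g$, the $\Gamma_g$ construction produces a holomorphic self-map $\Gamma_{g}(f)\colon\D\to\D$ commuting with $\varphi$ and satisfying $\Gamma_{g}(f)\circ h=h\circ f$. By Proposition \ref{divgamma}, $c(\Gamma_{g}(f))=c(f)>0$ because $f$ is hyperbolic. A positive divergence rate forbids an interior fixed point (any such would give a bounded orbit and hence $c=0$), so $\Gamma_{g}(f)$ is non-elliptic, and by Remark \ref{peterpan} its dilation at the Denjoy--Wolff point equals $e^{-c(f)}<1$. Thus $\Gamma_{g}(f)$ is a hyperbolic holomorphic self-map of $\D$ (not necessarily an automorphism).

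The final step is to use the commutation relation $\Gamma_{g}(f)\circ\varphi=\varphi\circ\Gamma_{g}(f)$ to exclude every possibility for $\varphi$ except the identity. If $\varphi$ is elliptic and different from $\id$, it has a unique fixed point $z_0\in\D$; commutation forces $\Gamma_{g}(f)(z_0)$ to be a fixed point of $\varphi$, hence $\Gamma_{g}(f)(z_0)=z_0$, contradicting the absence of interior fixed points for the hyperbolic map $\Gamma_{g}(f)$. If $\varphi$ is parabolic, then $\varphi$ and $\Gamma_{g}(f)$ are non-elliptic commuting holomorphic self-maps of $\D$ with $\Gamma_{g}(f)$ hyperbolic, so Cowen's Theorem \ref{Cowen-intro} would force $\varphi$ to be hyperbolic, contradicting $c(\varphi)=0$. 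The remaining case $\varphi=\id$ yields the desired conclusion.

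I do not expect any genuine obstacle here; the only delicate point is verifying that $\Gamma_{g}(f)$ is actually hyperbolic (rather than merely a map with positive divergence rate), which is handled by combining Proposition \ref{divgamma} with Remark \ref{peterpan}. The rest is a direct application of the CSM formalism developed in Section 2 together with the one-variable Cowen theorem, which is precisely the kind of reduction the theory was designed to enable.
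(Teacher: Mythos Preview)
Your argument is correct and follows the same route as the paper: take a CSM $(\D,h,\varphi)$ for $g$, push $f$ down via $\Gamma_g$, use Proposition \ref{divgamma} to see that $\Gamma_g(f)$ is hyperbolic, and then invoke the one-dimensional commutation result to force $\varphi=\id$. The only cosmetic difference is that the paper dispatches the last step in one line by citing the full version of Cowen's result \cite[Corollary 4.1]{Co2} (two commuting self-maps of $\D$ different from the identity are of the same dynamical type), whereas you split into the elliptic and parabolic cases and handle the elliptic case by the unique-fixed-point argument before applying Theorem \ref{Cowen-intro} to the parabolic case; both are fine.
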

\begin{proof}
Since $g$ is of type $1$, it admits a CSM  of the form $(\D,h,\v)$, where $\v$ is not hyperbolic. Let $\Gamma_g(f)\colon \D\to \D$ be the holomorphic self-map induced by $f$. By Proposition \ref{divgamma} the mapping $\Gamma_g(f)$ is hyperbolic. Since $\Gamma_g(f)$  commutes with $\v$, it follows from Cowen's theorem \cite[Corollary 4.1]{Co2} (or Theorem \ref{teo-aut} for $q=1$) that  $\v=\id_\H$. 
\end{proof}
\begin{corollary}\label{blu}
Let  $f: \B^q\to \B^q$ be a  hyperbolic holomorphic self-map and let $g\colon \B^q\to \B^q$ be a parabolic holomorphic self-map of type $1$ such that there exists $x\in \B^q$ with $s_1^g(x)>0$. Then $f$ and $g$ do not commute.
\end{corollary}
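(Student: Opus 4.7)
The plan is to argue by contradiction, and the argument is essentially immediate given the two results just proved.

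Assume that $f$ and $g$ commute. Since $g$ is parabolic of type $1$, Proposition \ref{para-1} applies, so $g$ admits a CSM of the form $(\D,h,\id_\D)$, where $h\colon \B^q\to\D$ is holomorphic and the automorphism is the identity.

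Now I apply Lemma \ref{divcomm} to the $1$-tuple $G\coloneqq(g)$ with CSM $(\D,h,(\id_\D))$ at $M=1$. For the given point $x\in\B^q$ with $s_1^g(x)>0$, the lemma yields
\[
s_1^g(x)\;=\;k_\D\bigl(h(x),\,\id_\D(h(x))\bigr)\;=\;k_\D\bigl(h(x),h(x)\bigr)\;=\;0,
\]
contradicting the hypothesis $s_1^g(x)>0$. Hence $f$ and $g$ cannot commute.

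There is no real obstacle here: the content is entirely packaged in Proposition \ref{para-1} (which forces the CSM-automorphism of the parabolic map to be trivial when a hyperbolic map commutes with it) together with the identification of the step with the Kobayashi distance in the semi-model provided by Lemma \ref{divcomm}. The only thing to check is that Lemma \ref{divcomm} indeed applies to the one-map family $G=(g)$, which it does since $g$ is a (trivially) commuting tuple on the cocompact Kobayashi hyperbolic manifold $\B^q$.
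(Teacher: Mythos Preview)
Your proof is correct and is exactly the argument the paper intends: the corollary is stated without proof because it follows immediately from Proposition~\ref{para-1} together with Lemma~\ref{divcomm}, precisely as you wrote.
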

\begin{remark}
It is actually an open question whether there exists a parabolic self map $g\colon \B^q\to \B^q$ with  ${\rm type}\, g\geq 1$ such that there exists $x\in \B^q$ with $s_1^g(x)=0$.
In other words, it is an open question whether there exists a parabolic self map $g\colon \B^q\to \B^q$ whose  CSM $(\B^k,h,\v)$ is such that $k\geq 1$ and $\v$ is elliptic.  
\end{remark}

The following proposition shows that there cannot be more type-related obstructions to the commutation of a hyperbolic and a parabolic mapping. 
\begin{proposition}\label{example}
Let $m\in \N$, $m\geq 2$. Let $1\leq q\leq m-1$, and let $2\leq p\leq m$.
Then there exist commuting holomorphic self-maps $f,g\colon \H^m\to \H^m$ such that 
$f$ is hyperbolic of type $q$ and $g$ is parabolic of type $p$.
\end{proposition}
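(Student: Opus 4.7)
The plan is to build $f$ and $g$ explicitly in Siegel coordinates $(z, w) \in \C \times \C^{m-1}$ by combining a hyperbolic dilation of $\H^m$ with a Heisenberg translation in a direction that $f$ leaves invariant, the two being glued together by a quadratic correction $\alpha(w)$ in the $z$-coordinate of $f$. Theorem \ref{teo-aut} forces $f$ to be of non-automorphism type, and this is naturally achieved by having $f$ act trivially on exactly those $w$-coordinates in which $g$ translates.

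Partition $\{1, \ldots, m-1\} = I_1 \sqcup I_2 \sqcup J_g \sqcup K$ with $|I_1| + |I_2| = q - 1$, $|J_g| + |K| = m - q$, $|J_g| \geq 1$, and $|I_1| + |J_g| = p - 1$; an elementary counting argument shows such a partition exists for every $1 \leq q \leq m - 1$, $2 \leq p \leq m$. Fix $\lambda > 1$ and a unit vector $V \in \C^{m-1}$ supported on $J_g$, and define $f, g\colon \H^m \to \H^m$ componentwise: the $z$-component of $f$ is $\lambda z - i(\lambda-1)\langle w, V\rangle^2$ and the $i$-th $w$-component is $\sqrt\lambda\, w_i$ for $i \in I_1 \sqcup I_2$ and $w_i$ for $i \in J_g \sqcup K$; the $z$-component of $g$ is $z + i + 2i\langle w, V\rangle$ and the $i$-th $w$-component is $w_i$ for $i \in I_1$, $w_i + V_i$ for $i \in J_g$, and $0$ for $i \in I_2 \sqcup K$.

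The key steps are: (i) verify that $f, g$ map $\H^m$ into itself and satisfy $f \circ g = g \circ f$, which reduce respectively to the Cauchy--Schwarz bound $\Re\langle w, V\rangle^2 \leq \|w\|^2$ (using $\|V\|=1$) and to the single functional identity $-i(\lambda-1)\langle w+V, V\rangle^2 + i(\lambda-1)\langle w, V\rangle^2 = -(\lambda-1)(i + 2i\langle w, V\rangle)$; (ii) check that $f$ is hyperbolic and $g$ is parabolic: the correction $\alpha$ vanishes on $w=0$, so the $z$-axis orbit $f^n(i, 0) = (i\lambda^n, 0)$ yields $c(f) = \tfrac12 \log\lambda > 0$, while $g$ leaves invariant the slice $\Sigma \coloneqq \{w_i = 0 : i \in I_2 \sqcup K\} \cong \H^p$ and $g|_\Sigma$ is a genuine Heisenberg automorphism of $\H^p$, giving $c(g) = 0$; absence of inner fixed points for either map is immediate; (iii) identify the CSMs. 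Introducing $\tilde z \coloneqq z - i\langle w, V\rangle^2$ turns $f$ into the dilation $(\tilde z, w) \mapsto (\lambda\tilde z, \sqrt\lambda w_{I_1 \sqcup I_2}, w_{J_g \sqcup K})$, and the polynomial retraction $\ell_f(z, w) \coloneqq (\tilde z, w_{I_1 \sqcup I_2})$ of $\H^m$ onto $\H^q$ intertwines $f$ with the hyperbolic automorphism $\tau_f(\tilde z, u) \coloneqq (\lambda\tilde z, \sqrt\lambda u)$ of $\H^q$; likewise, the projection $\ell_g(z, w) \coloneqq (z, w_{I_1}, w_{J_g})$ from $\H^m$ onto $\H^p$ intertwines $g$ with the Heisenberg parabolic automorphism $\tau_g$ of $\H^p$.

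The main obstacle is showing that these two triples are in fact the CSMs, not merely semi-models of possibly non-minimal dimension. For $f$, applying the Siegel dilation $D_{\lambda^{-n}}(z, w) \coloneqq (z/\lambda^n, w/\lambda^{n/2})$ of $\H^m$ and letting $n \to \infty$ gives $D_{\lambda^{-n}}(f^n x) \to (\ell_f(x), 0) \in \{w_{J_g \sqcup K} = 0\} \cong \H^q \subset \H^m$, and Lempert's theorem yields that the Kobayashi distance of $\H^m$ restricts to that of $\H^q$ on this affine slice, whence $\lim_n k_{\H^m}(f^n x, f^n y) = k_{\H^q}(\ell_f(x), \ell_f(y))$. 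For $g$, the orbit lies inside $\Sigma$ from step one onward and $g|_\Sigma$ is an automorphism, so $k_{\H^m}(g^n x, g^n y) = k_{\H^p}(\ell_g(x), \ell_g(y))$ for every $n \geq 1$. Formula \eqref{kobdyn} of Theorem \ref{principaleforward} then forces the canonical morphism from each CSM to our respective semi-model to be a surjective isometry, hence a biholomorphism, yielding ${\rm type}(f) = q$ and ${\rm type}(g) = p$.
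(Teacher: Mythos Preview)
Your construction is correct and the verification steps check out: the partition exists by the counting you indicate, the maps land in $\H^m$ and commute by the quadratic identity you wrote, and your Kobayashi--limit argument via the Siegel dilation $D_{\lambda^{-n}}$ together with \eqref{kobdyn} does force the morphism $\eta$ from the CSM to your semi-model to be a surjective holomorphic Kobayashi isometry, hence injective, hence (since the source is a ball and the target is $\H^q$) a biholomorphism; similarly for $g$ via the slice $\Sigma$.

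The paper proceeds differently in two respects. First, its explicit pair uses a single Heisenberg direction and lets $f$ \emph{kill} the last $m-1-q$ coordinates rather than fix them, so the paper's $f$ is not univalent whereas yours is; your four-block partition $I_1\sqcup I_2\sqcup J_g\sqcup K$ is a genuine generalisation of the paper's splitting. Second, and more substantially, to compute ${\rm type}(f)$ the paper does not use \eqref{kobdyn} at all: it passes to the univalent map $\varphi$ on $\H^{q+1}$, computes the abstract basin $\Omega=\{x:\varphi^n(x)\in\H^{q+1}\text{ eventually}\}$ explicitly from the closed formula for $\varphi^n$, and exhibits a biholomorphism $\Omega\cong\H^q\times\C$; the CSM is then read off as the hyperbolic factor. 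Your route is shorter for the purpose at hand and stays entirely within the semi-model formalism; the paper's route yields the full model $(\Omega,\iota,\varphi)$ as a by-product, which is extra information but not needed for the statement. One minor point: your final sentence compresses the ``surjective isometry $\Rightarrow$ biholomorphism'' step; it is worth spelling out that isometry gives injectivity (since $\H^q$ is Kobayashi hyperbolic), and that a holomorphic bijection between a ball and $\H^q$ forces equal dimensions and hence is a biholomorphism.
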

\begin{proof}
Let $r\in \R,$ $r\neq 0$. 
Denote a point in $\C^{m+2}$ by $(z,w,y)$, where $z,w\in \C$ and $y\in \C^m$. With a slight abuse of notation, if $m=2$, the triple $(z,w,y)$ simply denotes the point $(z,w)$.
Consider the  holomorphic self maps $ f,g $ of $\H^{m}$ defined by
$$ f(z,w,y)\coloneqq (2z+iw^2, w,\sqrt 2y_1,\dots,\sqrt 2y_{q-1},0,\dots,0)$$
$$   g(z,w,y)\coloneqq (z+ir^2-2rw, w-ir,y_1, \dots, y_{p-2},0,\dots,0).$$
It is easy to see that $f$ is hyperbolic and $g$ is parabolic and that they commute.
We claim that the type of $f$ is $q$, the type of $g$ is $p$, and that the type of the pair $(f,g)$ is ${\rm min}(p-1,q)$.
    
 To prove that the type of $f$ is $q$, we will study the univalent self-map $\varphi\colon \H^{q+1}\to\H^{q+1}$ defined by
$$\varphi(z,w,y_1,\dots ,y_{q-1})\coloneqq (2z+iw^2, w,\sqrt 2y_1,\dots,\sqrt 2y_{q-1}),$$ which clearly has the same type as $f$.
Notice that $\v$ actually defines a univalent mapping on the whole $\C^{q+1}$.   Set
$$\Omega\coloneqq \{ x\in \C^{q+1} \colon \varphi^n(x)\in \H^{q+1}\ \mbox{eventually}\},$$
then $(\Omega, \iota\colon \H^{q+1}\to \Omega,  \varphi)$ is a model for $\varphi$.
For all $n\geq 1$ and $(z,w,y_1,\dots ,y_{q-1})\in \C^{q+1}$, 
\begin{align*}
\varphi^n(z,w,y_1,\dots, y_{q-1})&=(2^nz+(2^n-1)iw^2,w,2^{\frac{n}{2}}y_1,\dots,2^{\frac{n}{2}}y_{q-1} )\\
&=(2^n(z+iw^2)-iw^2,w,2^{\frac{n}{2}}y_1,\dots,2^{\frac{n}{2}}y_{q-1}).
\end{align*}
The point $\varphi^n(z,w,y_1,\dots, y_{q-1})$ is in $\H^{q+1}$ if and only if
$$2^n\left(\Im(z+iw^2)-\sum_{j=1}^{q-1} |y_j|^2\right)>|w|^2+\Im(iw^2),$$ and thus 
 $$\Omega=\{(z,w,y_1,\dots ,y_{q-1})\in\C^{q+1}\colon \Im(z+iw^2)>\sum_{j=1}^{q-1} |y_j|^2\}.$$
The domain $\Omega$ is biholomorphic to $\H^{q}\times \C$. Indeed  the mapping $\eta\colon\Omega\to \H^{q}\times \C$  defined by
$$\eta(z,w,y_1,\dots, y_{q-1})\coloneqq((z+iw^2,y_1,\dots, y_{q-1}),w),$$
is a biholomorphism with inverse
 $$\eta^{-1}((\zeta,\xi_1,\dots ,\xi_{q-1}),\gamma)=(\zeta-i\gamma^2,\gamma,\xi_1,\dots ,\xi_{q-1}).$$ 
 
Define $\ell\colon \H^{q+1} \to\H^{q}$ by $\ell(z,w,y)\coloneqq (z+iw^2,y_1,\dots ,y_{q-1})$. 
Denote by $\Phi\colon \H^{q}\to\H^{q}$ the hyperbolic automorphism defined by $\Phi(\zeta,\xi)=(2\zeta,\sqrt 2 \xi)$, where 
$\zeta\in \C$ and $\xi \in \C^{q-1}$.
Notice that $$\ell(\v(z,w,y_1,\dots, y_{q-1}))=(2z+2iw^2,\sqrt 2 y_1,\dots, \sqrt 2 y_{q-1})=\Phi(\ell(z,w,y_1,\dots, y_{q-1})).$$
Then a CSM for $\v$ is given by $(\H^{q},\ell,\Phi)$, and this proves that ${\rm type\,}f={\rm type\,}\v=q$.

Let now $\pi\colon \H^m\to \H^p$ be given by $\pi(z,w,y)=(z,w, y_1,\dots, y_{p-2})$, and let $\Psi\colon \H^p\to \H^p$ be the parabolic automorphism defined by  $\Psi(a,b,c)\coloneqq (a+ir^2-2rb, b-ir,c),$  where 
$a,b\in \C$ and $c \in \C^{p-2}$. A CSM for $g$ is easily seen to be $(\H^p,\pi,\Psi)$, and thus ${\rm type\,}g=p$. Set $u\coloneqq {\rm min}(p-2,q-1)$.

Then $$\Gamma_g(f)(a,b,c)=(2a+ib^2,b, \sqrt 2 c_1,\dots , \sqrt 2 c_u,0,\dots, 0).$$ Applying the previous argument  we see that the  ${\rm type\,}\Gamma_g(f)=u+1$  and thus  Theorem \ref{compos} yields  ${\rm type\,}(f,g)=u+1$.

\end{proof}
\begin{remark}
Let $r\in \R$, $r\neq 0$. The commuting holomorphic self-maps of $\H^2$ defined by 
$$ f(z,w)\coloneqq (2z+iw^2, w), \quad 
 g(z,w)\coloneqq (z+ir^2-2rw, w-ir),$$
(which correspond to  the minimal dimension case $m=2$ in Proposition \ref{example}) were studied
in \cite{O} to show the existence of a holomorphic self-map of $\B^2$ with  a non-isolated boundary repelling fixed point.
\end{remark}

\begin{remark}
It follows from Proposition \ref{example}  that    ${\rm type}\, (f,g)$ can be strictly smaller than ${\rm min}\{{\rm type}(f),{\rm type}(g)\}.$
\end{remark}

\end{document}